\newtheorem{teo}[equation]{Theorem}
\newtheorem{defin}[equation]{Definition}
\newtheorem{prop}[equation]{Proposition}
\newtheorem{cor}[equation]{Corollary}
\newtheorem{lemma}[equation]{Lemma}
\newtheorem{prob}[equation]{Problem}
\theoremstyle{definition}
\newtheorem{ex}[equation]{Example}
\newtheoremstyle{dico}
 {\baselineskip}   
  {\topsep}   
  {}  
  {0pt}       
  {} 
  {.}         
  {5pt plus 1pt minus 1pt} 
  {}          
\theoremstyle{dico}
\newtheorem{say}[equation]{}
\numberwithin{equation}{section}
\newcommand{\XS}{\overline{X}^S_\tau}
\newcommand{\e}{\operatorname{e}}
\newcommand{\Keler}             {K\"{a}hler }
\newcommand{\inte}{\operatorname{int}}
\newcommand{\dalf}{\dot{\alfa}}
\newcommand{\ga}{\gamma}
\newcommand{\dga}{\dot{\gamma}}
\newcommand{\OO}{\mathcal{O}}
\newcommand{\meno}{^{-1}}
\newcommand{\PP}{\mathbb{P}}
\newcommand{\Mom}{\mu}
\newcommand{\proba}{\mathscr{P}}
\newcommand{\pb}{\proba(M)}
\newcommand{\misu}{\mathscr{M}}
\newcommand{\mis}{{\nu}}
\newcommand{\kn}{\psim}
\newcommand{\tnu}{\tilde{\nu}}
\newcommand{\tf}{\widehat{F}}
\newcommand{\convo}{E( \mu)}
\newcommand{\intec}{\Omega(\mu)}
\newcommand{\into} {\intec}
\newcommand{\XX}{\mathfrak{X}}
\newcommand{\psiM}{\psi^\proba}
\newcommand{\PsiM}{\Psi^\proba}
\newcommand{\Psim}{\Psi^M}
\newcommand{\psim}{\psi^M}
\newcommand{\liu}{\mathfrak{u}}
\newcommand{\liek}{\mathfrak{k}}
\newcommand{\liel}{\mathfrak{l}}
\newcommand{\lieg}{\mathfrak{g}}
\newcommand{\liet}{\mathfrak{t}}
\newcommand{\liep}{\mathfrak{p}}
\newcommand{\lieq}{\mathfrak{q}}
\newcommand{\lies}{\mathfrak{s}}
\newcommand{\liez}{\mathfrak{z}}
\newcommand{\lia}{\mathfrak{a}}
\newcommand{\grad}{\operatorname{grad}}
\newcommand{\chern}{\operatorname{c}}
\newcommand{\alfa}{\alpha}
\newcommand{\alf}{\alpha}
\newcommand{\vacuo}{\emptyset}
 \newcommand{\pf}{{}_*}
\newcommand{\la}{\lambda}
\newcommand{\enf}{\emph}
\newcommand{\desudt}[1] []      {\dfrac {\mathrm {d} #1 }{\mathrm {dt}}}
\newcommand{\desudtzero}        {\desudt \bigg \vert _{t=0} }
\newcommand{\deze}        {\desudt \bigg \vert _{t=0} }
\newcommand{\restr}[1]          {\vert_{#1}}
\newcommand{\Vol}{\operatorname{Vol}}
\newcommand{\vol}{\operatorname{vol}}
\newcommand{\Ad}{\operatorname{Ad}}
\newcommand{\sx}{\langle}
\newcommand{\xs}{\rangle}
\newcommand{\scalo}{\sx \cdot , \cdot \xs}
\newcommand{\Aut}{\operatorname{Aut}}
\newcommand{\SU} {\operatorname{SU}}
\newcommand{\su} {\mathfrak{su}}
\newcommand{\Sl}{\operatorname{SL}}
\newcommand{\SL}{\operatorname{SL}}
\newcommand{\Gl}{\operatorname{GL}}
\newcommand{\lds}{\ldots}
\newcommand{\cds}{\cdots}
\newcommand{\cd}{\cdot}
\renewcommand{\setminus}{-}
\newcommand{\cinf}{C^\infty}
\newcommand{\ra}{\rightarrow}
\newcommand{\lra}{\longrightarrow}
\newcommand{\C}{\mathbb{C}}
\newcommand{\R}{\mathbb{R}}
\newcommand{\om}{\omega}
\newcommand{\eps}{\varepsilon}
\renewcommand{\phi}{\varphi}
\renewcommand{\bigl}{\left}
\renewcommand{\biggl}{\left}
\renewcommand{\Bigl}{\left}
\renewcommand{\bigr}{\right}
\renewcommand{\biggr}{\right}
\renewcommand{\Bigr}{\right}
\newcommand{\fun}{\mathfrak{F}}
\newcommand{\bly}{F}
\newcommand{\Crit}{\operatorname{Crit}}
\newcommand{\Weyl}{\mathcal{W}}
\newcommand{\Id}{\operatorname{id}}
\newcommand{\id}{\operatorname{id}}
\newcommand{\lieh}{\mathfrak{h}}
\newcommand{\spaz}{\mathscr{M}}
\newcommand{\mume}{\fun\meno(0)}
\newcommand{\unfi}{u_\infty}
\newcommand{\tits}{\partial_\infty X}
\newcommand{\limes}{\alfa}
\newcommand{\x}{{v}}
\newcommand{\W}{\mathscr{W}}
\newcommand{\ac}{\mathscr{AC}}
\newcommand{\weak}{\rightharpoonup}
\newcommand{\go}{\hat{g}}
\newcommand{\fo}{\hat{f}}
\begin{document}

\title{Stability of measures on K\"ahler manifolds}

\author{Leonardo Biliotti}

\author{Alessandro Ghigi}

\begin{abstract}
  Let $(M,\om)$ be a K\"ahler manifold and let $K$ be a compact group
  that acts on $M$ in a Hamiltonian fashion.  We study the action
  of $K^\C$ on probability measures on $M$.
  First of all we identify an abstract setting for the momentum
  mapping and give numerical criteria for stability, semi-stability
  and polystability. Next we apply this setting to the action of
  $K^\C$ on measures.  We get various stability criteria for measures
  on K\"ahler manifolds. The same circle of ideas gives a very general
  surjectivity result for a map originally studied by Hersch and
  Bourguignon-Li-Yau.
\end{abstract}

\address{Universit\`{a} degli Studi di Parma} \email{leonardo.biliotti@unipr.it}
\address{Universit\`a degli Studi di Pavia}\email{alessandro.ghigi@unipv.it}

\thanks{The authors were partially supported by FIRB 2012 ``Geometria
  differenziale e teoria geometrica delle funzioni'', by a grant of
  the Max-Planck Institut f\"ur Mathematik, Bonn and by GNSAGA of
  INdAM.
 The first author was also supported by MIUR PRIN  2010-2011  
   ``Variet\`a reali e complesse: geometria, topologia e analisi armonica''.
 The second author was also supported by MIUR PRIN   2012
  ``Moduli, strutture geometriche e loro applicazioni''. }

\keywords{K\"ahler manifolds; moment maps; geometric invariant theory;
  probability measures.}

\subjclass[2010] {Primary 53D20; Secondary 32M05, 14L24}

\maketitle

\tableofcontents

\section{Introduction}
\label{sec:introduction}

Let $(M,\om)$ be a \Keler manifold and let $K$ a compact connected Lie
group. Assume that $K$ acts on $M$ in a Hamiltonian way with momentum
mapping $\mu : M \lra \liek^*$.  For $\nu$ a positive measure on $M$
set
\begin{gather}
  \label{bly-intro}
  \fun(\nu) : = \int_M \mu(x) d\nu(x).
\end{gather}
This defines a map from the set of measures to $\liek^*$. This map has
been studied at different levels of generality by Hersch
\cite{hersch}, Millson and Zombro \cite{millson-zombro},
Bourguignon, Peter Li and Yau \cite{bourguignon-li-yau} and ourselves
\cite{biliotti-ghigi-American}.  Recall that $G:=K^\C$ acts on $M$ and
hence on the set of measures on $M$.  As in the quoted papers, we are
interested in the following problem:
\begin{prob}
  \label{problem}
  \nonumber Assume that $0$ belongs to the interior of the convex
  envelope of $\mu(M) $ and let $\nu $ be a measure on $M$. Is there
  $g\in G$ such that $\fun(g\cd \nu) =0$?
\end{prob}
This question is motivated by an application to upper bounds for the
first eigenvalue of the Laplacian on functions.  For more details see
\ref {eigen} or the introduction to \cite{biliotti-ghigi-American}.
In this paper we concentrate on Problem \ref{problem} leaving aside
the applications to eigenvalue estimates.

For a sufficiently regular measure a positive answer to Problem 1 is known in a few
cases, namely $M=\PP^1$ (Hersch in 1970 \cite{hersch}), $M=\PP^n$
(Bourguignon, Li and Yau in 1994 \cite{bourguignon-li-yau}) and $M$ a
flag manifold  (ourselves in 2013
\cite{biliotti-ghigi-American}).  In all these cases $\om$ is the
symmetric metric and $K$ is the connected component of the isometry
group.

Our main theorem is a rather vast generalization of these results.
\begin{teo}
  \label{poppa}
  The answer to Problem \ref{problem} is positive for an arbitrary
  \Keler manifold with a Hamiltonian action of $K$ and for any
  probability measure $\nu$ that is absolutely continuous with respect
  to smooth strictly positive measures.
\end{teo}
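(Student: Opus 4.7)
The plan is to apply the abstract momentum-map framework developed in the earlier sections to the $G$-action on probability measures, where $\bly$ plays the role of the momentum map. Problem~\ref{problem} then becomes the statement that $\nu$ is polystable, and by the abstract numerical criterion it is enough to check that the weight of $\nu$ along every one-parameter subgroup $\{\exp(it\xi)\}_{t\in\R}$, $\xi\in\liek\setminus\{0\}$, is strictly positive.

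Fix $\xi\neq 0$ and set $\mu^\xi:=\langle \mu,\xi\rangle$. The standard \Keler identities give $JX_\xi = \grad\mu^\xi$, so the flow of $\exp(it\xi)$ on $M$ coincides with the gradient flow of $\mu^\xi$. Hence $t\mapsto \mu^\xi(\exp(it\xi)x)$ is nondecreasing and bounded, with limit
\[
\mu^\xi_\infty(x):=\lim_{t\to+\infty}\mu^\xi(\exp(it\xi)x)=\max_{y\in\overline{G\cd x}}\mu^\xi(y).
\]
By dominated convergence (compactness of $M$ makes $\mu^\xi$ uniformly bounded) the weight of $\nu$ along $\xi$ is
\[
\lim_{t\to+\infty}\langle\bly(\exp(it\xi)\cd\nu),\xi\rangle=\int_M\mu^\xi_\infty(x)\,d\nu(x).
\]

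The crucial ingredient is then the Bialynicki--Birula / Kirwan decomposition for the holomorphic $\C^*$-action on the compact \Keler manifold $M$: the basin of attraction $M^+$ of the maximum set $\Max(\mu^\xi)$ for the gradient flow is an open dense stratum whose complement is a finite union of submanifolds of strictly positive codimension, and hence has measure zero with respect to any smooth strictly positive measure. On $M^+$ one has $\mu^\xi_\infty\equiv\max_M\mu^\xi$. Since $\nu$ is absolutely continuous with respect to such a measure, the weight reduces to $\max_M\mu^\xi\cdot\nu(M)=\max_M\mu^\xi$. The hypothesis $0\in \inte\conv\mu(M)$ is exactly the statement that $\max_M\mu^\xi>0$ for every $\xi\neq 0$, so all weights are strictly positive, $\nu$ is polystable, and the abstract existence theorem furnishes $g\in G$ with $\bly(g\cd\nu)=0$.

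The main obstacle, once the abstract framework is available, is a careful identification of the notion of weight from the abstract numerical criterion with the explicit limit $\int_M\mu^\xi_\infty\,d\nu$; this means checking both that the criterion admits this infinite-dimensional incarnation on $\proba(M)$ and that the passage to the limit commutes with integration (dominated convergence). Beyond that, one needs the Bialynicki--Birula picture in the purely \Keler (not necessarily algebraic) setting, which is standard but requires Morse--Bott properties of $\mu^\xi$ together with the Kirwan refinement of the stratification. With these ingredients in place, the theorem reduces to the elementary observation that absolute continuity kills the lower-dimensional strata.
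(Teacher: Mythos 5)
Your proposal is correct and follows essentially the same route as the paper: a Kempf--Ness function on $\proba(M)$, the computation of the maximal weight via the Morse--Bott/unstable-manifold decomposition of $\mu^v$ (the paper's Theorem \ref{calcolo-lambda} and Corollary \ref{sup}), the observation that absolute continuity kills all strata except the open one so that the weight equals $\max_M\mu^v>0$ when $0\in\intec$, and the numerical criterion of Theorem \ref{stabile} giving stability, hence polystability and the desired $g$. The only slip is the parenthetical identification $\mu^\xi_\infty(x)=\max_{y\in\overline{G\cd x}}\mu^\xi(y)$, which fails for $x$ in a lower stratum (the limit equals the critical value of the stratum containing $x$, i.e.\ $\mu^\xi$ evaluated at the limit point of the forward flow), but you never use this identification, so the argument is unaffected.
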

(See \ref{smooth-measure} and Definition \ref{def-ac} for the
definition of this class of measures. See Theorem \ref{propria} for
the actual result which is in fact stronger than stated here).

To study Problem \ref{problem} we cast it in a momentum mapping
picture.  The natural action to consider is the action of $G$ on the
set $\proba(M)$ of Borel probability measures on $M$. This set does
not seem to admit a reasonable symplectic structure. (But see
\cite{gangbo-kim-pacini} for something similar in the case of
Euclidean space.) One could find some space admitting a symplectic
structure and fibering over $\proba(M)$ and one could lift the problem
to this space. But this seems rather artificial. Instead it turns out
that the part of the momentum mapping/G.I.T.  picture which is needed
can be developed on a topological space with no symplectic structure.
Thus our first goal is to build up a theory for the momentum mapping
that can be applied to the action of $G$ on $\pb$.  This is the
content of Sections
  \ref{sec:abstract-setting}-\ref{sec:polyst-semi-stab}.

More precisely, 
given a Hausdorff topological space with a continuous $G$-action and a
set of functions formally similar to the classical Kempf-Ness
functions (see \ref{setting}) we define an analogue of the momentum
mapping and the usual concepts of stability (see Definition \ref
{stabilita}). The point of this construction is that one can characterize
stability, semi-stability and polystability of a point $x$ by numerical criteria,
that is in terms of a function called \emph{maximal weight} and
  denoted $\la_x$, which is defined on Tits boundary of $G/K$. See
\ref{say-def-la} for the definition of $\la_x$ and Theorems
\ref{stabile}, \ref{semi-stable-abstract}, \ref{ps} for the precise
statements. We also establish a version of the Hilbert-Mumford criterion (Corollary \ref{Hilbert-Mumford}) and the openness of the set of stable points (Corollary \ref{stable-open-abstract-setting}).
In the classical case of a group action
on a K\"ahler manifold these characterizations are due to Mundet i Riera \cite{mundet-Crelles, mundet-Trans}, Teleman \cite {teleman-symplectic-stability},
Kapovich, Leeb and Millson \cite{kapovich-leeb-millson-convex-JDG} and
probably many others. In fact many of these ideas go back as far as
Mumford \cite[\S 2.2]{mumford-GIT}.

A different approach to stability on K\"ahler manifolds is based on
the properties of invariant plurisubharmonic functions and the Mostow
fibration instead of Kempf-Ness functions. This has been developed
over the years by Azad, Heinzner, Huckleberry, Loeb, Loose, Schwarz
and others \cite{azad-loeb-bulletin}, \cite{heinzner-GIT-stein}, \cite
{heinzner-huckleberry-Inventiones}, \cite{heinzner-huckleberry-MSRI},
\cite {heinzner-huckleberry-loose}, \cite{heinzner-loose},
\cite{heinzner-schwarz-Cartan}.  It seems hard to apply this approach
in our setting since it relies heavily on the fact that the space
where the group acts is a complex space.  A similar remark applies to
the techniques used in \cite{georgula}.  The main tool there is the
gradient flow of the momentum mapping squared. This is not available
without some kind of differentiable structure.

The setting we have chosen to develop the theory is not necessarily
the most natural, nor the most general. However it is well suited for
the study of our problem, namely the stability of probability measures
on a compact K\"ahler manifold.  It should be of interest in other
situations. Even in the classical case of a Hamiltonian action on a
compact K\"ahler manifold, it offers a rather streamlined proof of the
numerical criteria for stability, semi-stability and polystability.

In Section \ref{sez-misure} we apply the abstract theory to the action
of $G$ on $\pb$ endowed with the weak topology.  The map
\eqref{bly-intro} turns out be the analogue of the momentum mapping in
this setting.  Using the Morse-Bott theory of the momentum mapping on
$M$ we are able to compute rather explicitly the maximal weight
$\la_\nu$ of $\nu \in \pb$.  Indeed fix a non-zero $v\in \liek$ and
let $c_0 < \cds < c_r$ be the critical values of
$\mu^v := \sx \mu, v\xs$. Let $C_i:=(\mu^v)\meno ( c_i)$ be the
critical components.  Set
\begin{gather*}
  W^u_i := \{ x\in M: \lim_{t\to +\infty} \exp(itv) \cd x \in C_i \}.
\end{gather*}
This is the \emph{unstable manifold} of $C_i$ for the gradient flow of
$\mu^v$.  Denote by $\e(-v)$ the point of $\tits$ corresponding to the
geodesic $t \mapsto \exp(-itv)\cd K$ in $G/K$.  The maximal weight can
be computed in terms of these Morse data and the result is rather
clean:
\begin{gather*}
  \la_\nu(\e(-v)) = \sum_{i=0}^r c_i \cd \nu(W^u_i).
\end{gather*}
Based on this formula we get various stability criteria for measures.
For example every measure that is absolutely continuous with respect
to a smooth strictly positive measure is stable up to shifting the
momentum mapping (Theorem \ref{W-stabile}).

In Section \ref{sec:bly} we go back to Problem \ref{problem}.
The first step is analogous to something known in the
finite-dimensional case: if the stabilizer of a measure $\nu$ is
compact, then the restriction of $\fun$ to the orbit $G\cd \nu$ is a
submersion (Theorem \ref{maxirank}).  Under a mild regularity
assumption on $\nu$ the restriction of $\fun$ to $G\cd \nu$ is in fact
a smooth fibration onto the interior of the convex envelope of
$\mu(M)$ (Theorem \ref{propria}).  Theorem \ref{poppa} follows
immediately.

Section \ref{sec:applications} contains some applications.  Using the
previous results we get a precise characterization of stable,
semi-stable and polystable measures on $\PP^n$ (Theorems \ref{prost}
and \ref{props}), extending previous work by Millson-Zombro and
Donaldson.

Next we turn to the application to upper bounds for $\la_1$. We
explain that the results in the paper give shorter and more conceptual
proofs of some known statements. We also explain what is missing to
get a very general estimate for $\la_1$ using these ideas.

{\bfseries \noindent{Acknowledgements.}}  The authors wish to thank
Luigi Fontana, Daniel Greb, Peter Heinzner and Ignasi Mundet i Riera
for interesting discussions/emails related to the subject of this
paper.  They also wish to thank the Max-Planck Institut f\"ur
Mathematik, Bonn for excellent conditions provided during their visit
at this institution, where they started working on the subject of this
paper. Finally they wish to thank the referees for reading very
carefully the manuscript.

\section{Kempf-Ness functions}
\label{sec:abstract-setting}
In this section we introduce an abstract setting for actions of
complex reductive groups on topological spaces. More precisely, we
define functions similar to the classical Kempf-Ness functions, from
which we derive the whole momentum mapping picture. We start with some
remarks on convex functions of symmetric spaces.

\begin{say}
  \label{ss1}
  Let $X$ be a symmetric space of the noncompact type. (Some of the
  following statements hold more generally if $X$ is a Hadamard
  manifold or even a $ CAT(0)$--space. We restrict to the case of a
  symmetric space which is the only one needed for the theory of the
  momentum mapping.)  Two unit speed geodesics $\ga, \ga' : \R \ra X$
  are equivalent, denoted $\ga \sim \ga'$, if
  $\sup_{t>0} d(\ga(t), \ga'(t)) $ $ < +\infty$.  The \emph{Tits
    boundary} of $X$, denoted by $\tits$, is the set of equivalence
  classes of unit speed geodesics in $X$.  Assume that $X=G/K$ with
  $G$ is a connected Lie group and $K\subset G$ a maximal compact
  subgroup. Put $o:= K \in X$.  For $v \in \liek $, let $\ga^v$ denote
  the geodesic $\ga^v (t): = \exp(it\x) K$. Mapping $v$ to the tangent
  vector $\dot{\ga}^v(0)$ yields an isomorphism $\liek \cong T_oX$.
  Since any geodesic ray in $X$ is equivalent to a unique ray starting
  from $o$, the map
  \begin{gather}
    \label{def-e}
    \e : S(\liek) \ra \tits, \ \e(v):= [\ga^v],
  \end{gather}
  where $S(\liek)$ is the unit sphere in $\liek$, is a bijection. The
  \emph{sphere topology} is the topology on $\tits$ such that $\e$ is
  a homeomorphism.  (For more details on the Tits boundary see for
  example \cite[\S I.2] {borel-ji-libro}.)
\end{say}

\begin{say}
  \label{ss2}
  Let $X$ be a topological space. A continuous function $u: X \ra \R$
  is an \emph{exhaustion} if for any $c\in \R$ the set
  $f\meno ((-\infty, c])$ is compact. A continuous function is an
  exhaustion if and only if it is bounded below and proper.  The
  following lemma is proven in greater generality in the paper
  \cite[\S 3.1]{kapovich-leeb-millson-convex-JDG} by Kapovich, Leeb
  and Millson. Since it is basic to everything that follows, we recall
  its proof in detail. For more information on the geometry of convex
  functions on symmetric spaces and the relation with their Tits
  boundary see \cite{kapovich-leeb-millson-convex-JDG} especially
  pp. 313-316.
\end{say}

\begin{lemma}\label{convex-function}
  Let $u: X \ra \R$ be a smooth convex function on $X$. If
  $u$ is globally Lipschitz,  the function 
  \begin{gather}
    \label{eq:1}
    \unfi : \tits \ra \R ,\quad \unfi ([\ga]) : = \lim _{t \to +\infty
    } (u\circ\ga)'(t),
  \end{gather}
  is well--defined. Moreover $u$ is an exhaustion if and only if
  $\unfi > 0$ on $\tits$.
\end{lemma}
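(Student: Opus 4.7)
The plan is to prove three assertions in turn: (a) that $\lim_{t \to +\infty} (u \circ \gamma)'(t)$ is finite for every unit-speed geodesic $\gamma$; (b) that this limit depends only on the equivalence class $[\gamma]$; and (c) that $u$ is an exhaustion if and only if $u_\infty > 0$ on all of $\tits$. For (a), fix a unit-speed representative $\gamma$ and set $\varphi := u \circ \gamma$. Because $u$ is convex and $\gamma$ is a geodesic, $\varphi$ is a convex function on $\R$, so $\varphi'$ is non-decreasing; because $u$ is $L$-Lipschitz and $\gamma$ has unit speed, $|\varphi'| \leq L$; and a bounded monotone function has a finite limit at $+\infty$. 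An integration then also gives $\varphi(t)/t \to u_\infty([\gamma])$.

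For (b), if $\gamma \sim \gamma'$ then $d(\gamma(t), \gamma'(t))$ is bounded in $t$, so the Lipschitz property forces $|u \circ \gamma - u \circ \gamma'| = O(1)$; dividing by $t$ and using the Ces\`aro-type formula from (a) shows the two asymptotic slopes coincide. One direction of (c) is then immediate by contraposition: if $u_\infty([\gamma]) \leq 0$, then $\varphi'$ is non-decreasing with non-positive limit, so $\varphi' \leq 0$ everywhere and $\varphi$ is non-increasing; but the non-compact ray $\gamma([0,\infty))$ would then lie in the sublevel set $\{u \leq u(\gamma(0))\}$, contradicting the exhaustion property.

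The main work lies in the converse implication of (c), and this is where I expect the main obstacle. Assuming $u_\infty > 0$ pointwise, I argue by contradiction: suppose $c \in \R$ and $x_n \in X$ satisfy $u(x_n) \leq c$ and $t_n := d(o, x_n) \to +\infty$. Let $\gamma_n$ be the unit-speed geodesic ray from $o$ through $x_n$, with initial direction $v_n \in S(\liek) \cong S(T_o X)$ under the identification of \ref{ss1}. By compactness of $S(\liek)$, a subsequence satisfies $v_n \to v$, and continuous dependence of the exponential map on initial conditions gives $\gamma_n(s) \to \gamma^v(s)$ locally uniformly on $[0,\infty)$. Convexity of $u \circ \gamma_n$ on $[0, t_n]$ yields
\[
u(\gamma_n(s)) \leq \bigl(1 - s/t_n\bigr)\, u(o) + (s/t_n)\, u(x_n),
\]
and fixing $s \geq 0$ while letting $n \to \infty$ produces $u(\gamma^v(s)) \leq u(o)$ for every $s \geq 0$, so $u \circ \gamma^v$ is bounded above on $[0,\infty)$. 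On the other hand, $u_\infty(\e(v)) > 0$ together with the monotonicity of $(u \circ \gamma^v)'$ forces $(u \circ \gamma^v)(s) \to +\infty$, a contradiction. The delicate point is precisely the passage from pointwise positivity of $u_\infty$ to the global exhaustion property: it requires collapsing the "escaping" sequence $\{x_n\}$ onto a single limit ray $\gamma^v$ via compactness of the direction sphere, and then exploiting the asymptotic convex-combination inequality to transfer the boundedness of $u(x_n)$ to boundedness along the limit ray.
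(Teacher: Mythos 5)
Your proposal is correct. Parts (a), (b) and the ``exhaustion $\Rightarrow$ $u_\infty>0$'' direction run essentially as in the paper: the asymptotic slope is identified with $\lim_t u(\ga(t))/t$ via convexity, finiteness comes from the Lipschitz bound on $(u\circ\ga)'$, and independence of the representative follows from $|u\circ\ga(t)-u\circ\tilde\ga(t)|\le LC$. (Your contrapositive for the forward direction --- a nonpositive asymptotic slope traps the whole ray in a sublevel set --- is a mild variant of the paper's argument, which instead observes that $(u\circ\ga(t)-u\circ\ga(0))/t$ is eventually positive.)

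The converse implication is where you genuinely diverge. The paper argues directly: for each direction $v$ it extracts a linear lower bound $u(\exp_x(tv))\ge 2\eps(v)t$ for $t\ge T_v$, propagates the bound to a neighbourhood $U_v$ of $v$ at the single time $T_v$ by continuity of $u$, pushes it forward to all $t\ge T_v$ by monotonicity of the convex difference quotient, and then covers $S(T_xX)$ by finitely many such $U_{v_i}$ to obtain a uniform estimate $u(\exp_x w)\ge\eps|w|$ for $|w|\ge T$, which is the exhaustion property in explicit form. You instead argue by contradiction: an unbounded sequence $\{x_n\}$ in a sublevel set yields (after passing to a subsequence of directions, again by compactness of the sphere) a limit ray $\ga^v$ along which the convex-combination inequality $u(\ga_n(s))\le(1-s/t_n)u(o)+(s/t_n)u(x_n)$ forces $u\circ\ga^v\le u(o)$, incompatible with $u_\infty(\e(v))>0$. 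Both proofs hinge on compactness of the unit tangent sphere; the paper's version buys a quantitative linear growth estimate, while yours is a clean qualitative argument that only uses continuity and convexity of $u$ (not the Lipschitz bound) in this step. The one point you should make explicit is the standard fact that a closed non-compact subset of the complete manifold $X$ is unbounded, which is what produces the sequence with $d(o,x_n)\to+\infty$ from the failure of exhaustion.
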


\begin{proof}
  Since $f:=u\circ \ga$ is convex,
  \begin{gather*}
    \frac{f(s)}{s} \leq f'(s) \leq \frac{f(t) - f(s)}{t-s} \quad
    \text{ for }0< s< t.
  \end{gather*}
  Moreover the first two quantities are increasing in $s$, the third
  in $t$.  Thus
  \begin{gather*}
    \lim_{s\to +\infty } \frac{f(s)}{s} \leq \lim_{s \to +\infty}
    f'(s) \leq \lim_{t\to +\infty} \frac{f(t) - f(s) } { t -s } .
  \end{gather*}
  Since the last limit equals the first we get
  $\lim _{t \to +\infty } f'(t) = \lim_{t\to +\infty} f(t) / t$.  If
  $\tilde\ga$ is another geodesic and
  $d(\ga(t), \tilde \ga(t)) \leq C$, then
  $| u\circ\ga(t) / t - u\circ \tilde \ga(t) / t| \leq LC / t$, where
  $L$ is a Lipschitz constant for $u$. Therefore
  \begin{gather*}
    \lim _{t \to +\infty } (u\circ \ga)'(t) = \lim _{t \to +\infty }
    \frac{u\circ\ga(t)}{t}= \lim _{t \to +\infty } \frac{u\circ \tilde
      \ga(t)}{t}= \lim _{t \to +\infty } (u\circ \tilde \ga)'(t).
  \end{gather*}
  This shows that $\unfi$ is well-defined.  It is finite since
  $|f'(s) | \leq L$.  Assume now that $u$ is an exhaustion.  Given
  $\ga$ there is $t_0>0$ such that for $t> t_0$,
  $u\circ\ga(t) > u\circ\ga(0)$. Thus
  \begin{gather*}
    \unfi[ \ga] = \lim_{t\to +\infty} \frac{ u\circ \ga(t) -
      u\circ\ga(0) } {t} >0.
  \end{gather*}
  Conversely assume that $\unfi ([\ga]) > 0$ for any $[\ga]\in \tits$.
  Fix $x\in X$ and let $S$ be the unit sphere in $T_xX$. For any
  $v\in S$ let $\ga^v(t) =\exp_x(tv)$. Since $\unfi([\ga^v]) >0$ there
  are $\eps (v) >0$ and $T_v $ such that
  $u\circ\ga^v(t) \geq 2\eps(v) t $ for any $t\geq T_v$.  By
  continuity there is a neighbourhood $U_v \subset S$ of $v$ such that
  $u(\exp_x (T_vw)) \geq \eps(v) T_v $ for any $w\in U_v$. Since
  $u\circ \ga_w$ is convex, the function $ u\circ\ga_w(t) / t$ is
  increasing. Thus $u(\exp_x (tw) ) \geq \eps(v) t $ for any
  $t\geq T_v$ and any $w\in U_v$.  If $S = \bigcup_{i=1}^k U_{v_i}$
  set $T:=\max \{T_{v_i}\}$ and $\eps : = \min \{\eps(v_i)\}$. Then
  $u ( \exp_x v) \geq \eps |v| $ for any $v\in T_xX$ with
  $|v| \geq T$. This shows that $u$ is an exhaustion function.
\end{proof}

\begin{say}
  \label{setting}
  Let $\spaz$ be a Hausdorff topological space and let $K$ be a
  compact connected Lie group. Denote by $G=K^\C$ the complexification
  of $K$ and assume that $G$ acts from the left on $\spaz$ and that
  the action is continuous, that is the map $G \times \spaz \ra \spaz$
  is continuous.  Starting with these data we are going to consider a
  function $ \Psi : \spaz \times G \ra \R$, subject to six conditions.
  The first four conditions are the following ones:
  \begin{enumerate}[label=(P\arabic*),ref=(P\arabic*)]
  \item \label{P1} For any $x\in \spaz$ the function $ \Psi(x,\cd )$
    is smooth on $G$.
  \item \label{P2} The function $\Psi(x, \cd )$ is left--invariant
    with respect to $K$: $\Psi(x,kg) = \Psi(x,g)$.
  \item\label{P3} For any $x\in \spaz$, and any $\x \in \liek$ and
    $t\in \R$
    \begin{gather*}
      \frac{\mathrm{d^2}}{\mathrm{dt}^2 } \Psi(x,\exp(it\x)) \geq 0.
    \end{gather*}
    Moreover
    \begin{gather*}
      \frac{\mathrm{d^2}}{\mathrm{dt}^2 }\bigg \vert_{t=0}
      \Psi(x,\exp(it\x)) = 0
    \end{gather*}
    if and only if $\exp(\C \x) \subset G_x$.
  \item \label{P4} For any $x\in \spaz$, and any $g, h\in G$
    \begin{gather*}
      \Psi(x,g) + \Psi({gx}, h) = \Psi(x,hg).
    \end{gather*}
    (This equation is called the \emph{cocycle condition}.)
  \end{enumerate}
\end{say}
\begin{say}
  Set
  \begin{gather*}
    X:=G/K.
  \end{gather*}
  Fix an $\Ad$--invariant scalar product $\scalo$ on $\liek$ and
  consider the corresponding Riemannian metric on $X$.  If $\Psi$ is a
  function satisfying \ref{P1}--\ref{P4}, then by \ref{P2} the
  function $g\mapsto \Psi(x, g\meno)$ descends to a function on $X$:
  \begin{gather}
    \label{defpsi}
    \psi_x: X \ra\R, \quad \psi_x(gK) := \Psi(x, g\meno).
  \end{gather}
  Using $\psi_x$ instead of $\Psi$ the cocycle condition reads
  \begin{gather}
    \tag{P4$'$}
    \label{cociclo-psi}
    \psi_x(ghK) = \psi_x (gK) + \psi_{g\meno x} (hK).
  \end{gather}
  We can now state our fifth assumption:
  \begin{enumerate}[label=(P\arabic*),ref=(P\arabic*)]
    \setcounter{enumi}{4}
  \item \label{P5} For all $x \in \spaz$, the function $\psi_x$ is
    globally Lipschitz on $X$.
  \end{enumerate}
\end{say}

\begin{say}
  Let $\scalo : \liek^*\times \liek \ra \R$ be the duality pairing.
  For $x\in \spaz$ define $\fun(x) \in \liek^*$ by requiring that
  \begin{multline}
    \label{momento-astratto}
    \sx \fun (x), \x\xs = -  d\psi_x (o)(\dga^\x(0)) = \\
    = \desudtzero \psi_x( \exp(-it\x) K) = \desudtzero \Psi(x,
    \exp(it\x)) .
  \end{multline}
  (Notation as in \ref{ss1}.)  The following is the last condition
  imposed on the function $\Psi$:
  \begin{enumerate}[label=(P\arabic*),ref=(P\arabic*)]
    \setcounter {enumi}{5} \item \label{P6} The map
    $\fun : \spaz \ra \liek^*$ is continuous.
  \end{enumerate}
\end{say}
The following definition summarizes the above discussion.
\begin{defin}
  \label{def-kn}
  If $K$ is a compact connected Lie group, $G=K^\C $ and $\spaz$ is a
  topological space with a continuous $G$--action, a \emph{Kempf-Ness
    function} for $(\spaz, G,K)$ is a function
  \begin{gather*}
    \Psi : \spaz \times G \ra \R ,
  \end{gather*}
  that satisfies conditions \ref{P1}--\ref{P6}.
  We call $\fun$ the \emph{momentum mapping} of $(\spaz, G, K, \Psi)$.
\end{defin}

\begin{say}\label{notazione-varieta}
  The basic example of a Kempf-Ness function is the following.  Let
  $(M,J)$ be a compact K\"ahler manifold of complex dimension $n$ and
  let $K$ be a compact connected Lie group.  Assume that $K$ acts
  almost effectively and holomorphically on $M$ and that $g$ is a
  $K$-invariant \Keler metric with \Keler form $\om$.  If $v\in \liek$,
  let $v_M\in \XX(M)$ denote the fundamental vector field induced on
  $M$.  Assume that the action of $K$ on $(M,\om)$ is Hamiltonian and
  fix a momentum mapping
  \begin{gather*}
    \mu : M \ra \liek^*.
  \end{gather*}
  If $v\in \liek$, set $\mu^v :=\sx \mu, v\xs $. That $\mu$ is a
  momentum mapping means that it is $K$-equivariant and that
  $d\mu^v = i_{v_M}\om$.  It is well-known that the action of $K$
  extends to a holomorphic action of the complexification $G:=K^\C$.

  It has been proven by Mundet \cite[\S 3]{mundet-Crelles} that one
  can always choose a function $\Psim : M \times G \ra \R$ that has
  the properties \ref{P1}--\ref{P4}. Set
  $\psim _x(gK) : = \Psim(x, g\meno)$. Then
  \begin{gather}\label{caso-mundet}
    -d\psim_x(o)(\dga^v(0))= \desudtzero \Psim(x, \exp (itv) ) =
    \mu^v(x).
  \end{gather}
  Hence $\fun = \mu $ and \ref{P6} is satisfied. Moreover it follows
  from \eqref{cociclo-3} that
  $d\psim_x(gK) (g\dga^v (0)) = d\psim_{g\meno x}(o) (\dga^v(0))$.
  Since $M$ is compact, $||\mu|| $ is a bounded function, so $\psim_x$
  is Lipschitz. Thus also \ref{P5} holds.

  In the case of a projective manifold with the restriction of the
  Fubini-Study metric $\psi_x^M$ is the function originally defined by
  Kempf and Ness in \cite{kempf-ness}.
\end{say}

Next we deduce some immediate consequences of Definition \ref{def-kn}.
\begin{prop}\label{equivarianza}
  The map $\fun : \spaz \ra \liek^*$ is $K$-equivariant.
\end{prop}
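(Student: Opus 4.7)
The plan is to unwind the definition of $\fun$ using the cocycle condition \ref{P4} and the left $K$-invariance \ref{P2}, and read off equivariance directly.

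Fix $k\in K$, $\x\in\liek$, $x\in\spaz$. To compute $\sx\fun(kx),\x\xs$, I would start from
\[
\sx\fun(kx),\x\xs=\desudtzero \Psi(kx,\exp(it\x))
\]
and rewrite $\Psi(kx,\exp(it\x))$ using the cocycle identity \ref{P4} with $g=k$ and $h=\exp(it\x)$:
\[
\Psi(x,k)+\Psi(kx,\exp(it\x))=\Psi(x,\exp(it\x)\,k).
\]
Thus $\Psi(kx,\exp(it\x))=\Psi(x,\exp(it\x)\,k)-\Psi(x,k)$, and the second term is independent of $t$ so it drops out upon differentiation.

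Next I would move the $K$-factor to the left by writing $\exp(it\x)\,k=k\exp(it\Ad(k\meno)\x)$ and applying the left $K$-invariance \ref{P2} to get
\[
\Psi(x,\exp(it\x)\,k)=\Psi\bigl(x,\exp(it\Ad(k\meno)\x)\bigr).
\]
Differentiating at $t=0$ now yields $\sx\fun(x),\Ad(k\meno)\x\xs$ by the definition of $\fun$. Combining,
\[
\sx\fun(kx),\x\xs=\sx\fun(x),\Ad(k\meno)\x\xs=\sx\Ad^*(k)\fun(x),\x\xs,
\]
which is the desired equivariance $\fun(kx)=\Ad^*(k)\fun(x)$.

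There is essentially no obstacle: the argument is a mechanical manipulation of \ref{P2} and \ref{P4}, and the only thing one needs to be a little careful about is the order of $k$ and $\exp(it\x)$ and the resulting conjugation that produces the adjoint action on the coadjoint side.
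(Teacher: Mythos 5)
Your proof is correct and follows essentially the same route as the paper: apply the cocycle condition \ref{P4} to reduce $\Psi(kx,\exp(it\x))$ to $\Psi(x,\exp(it\x)k)$ up to a $t$-independent constant, then use the left $K$-invariance \ref{P2} together with $\exp(it\x)k=k\exp(it\Ad(k\meno)\x)$ and differentiate at $t=0$. The only cosmetic difference is that the paper inserts $k\meno$ on the left via \ref{P2} before conjugating, whereas you conjugate first and then strip the leftover $k$; the computation is identical.
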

\begin{proof}
  By the cocycle condition
  $\Psi(kx, \exp(itv)) = \Psi(x, \exp(itv)k) - \Psi(x, k)$. So using
  the left-invariance of $\Psi(x,\cdot)$ with respect to $K$ we get
  \begin{gather*}
    \begin{split}
      \sx \fun (kx),\x \xs &=\deze \Psi (x, \exp(it\x)k)=\deze \Psi
      (x,k^{-1} \exp(it\x) k)\\
      &=\deze \Psi\left (x,\exp(it\mathrm{Ad}(k^{-1})(\x)\right
      )=\mathrm{Ad}^* (k) ( \fun (x))(\x) .
    \end{split}
  \end{gather*}
\end{proof}

\begin{say}
  \label{zero} Taking $g=h=e$ in the cocycle condition \ref{P4} we
  obtain $\Psi(x, e) = 0$ and so $\Psi(x,k)=0$ for every $k\in
  K$. Moreover, for any $x\in \spaz$ and for any $g,h\in G_x$ we have
  \begin{equation}
    \label{somma}
    \Psi(x,hg)=\Psi(x,g)+\Psi(x,h).
  \end{equation}
\end{say}
\begin{lemma}
  \label{lineare-1}
  If $\x \in \liek$ and $iv\in \lieg_x$, then
  $ \Psi(x, \exp(itv))= \psi_x(\exp(-itv) K)$ is a linear function of
  $t$.
\end{lemma}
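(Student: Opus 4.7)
The plan is to reduce the statement to Cauchy's functional equation via the cocycle condition \ref{P4}. Set
\begin{gather*}
  f(t) := \Psi(x, \exp(itv)).
\end{gather*}
The equality $\Psi(x, \exp(itv)) = \psi_x(\exp(-itv)K)$ is immediate from the definition \eqref{defpsi} of $\psi_x$, so the content of the lemma is the linearity of $f$ in $t$.

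First I would apply the cocycle condition \ref{P4} with $g = \exp(itv)$ and $h = \exp(isv)$. Since by hypothesis $iv \in \lieg_x$, the one-parameter subgroup $\exp(itv)$ lies in the stabilizer $G_x$, so $gx = x$. Therefore \ref{P4} reads
\begin{gather*}
  \Psi(x, \exp(isv)\exp(itv)) = \Psi(x, \exp(itv)) + \Psi(x, \exp(isv)),
\end{gather*}
and because $v$ and $iv$ commute, the left-hand side equals $\Psi(x, \exp(i(s+t)v)) = f(s+t)$. Hence $f(s+t) = f(s) + f(t)$ for all $s,t \in \R$.

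The final step is to combine this additivity with the regularity coming from \ref{P1}: the function $f$ is smooth in $t$, so the only solutions of Cauchy's functional equation available are the linear ones, $f(t) = f'(0) \cdot t$. This is entirely routine and is the whole argument; no real obstacle arises, the only thing to notice is that $iv \in \lieg_x$ is used precisely to kill the ``$gx$'' term in the cocycle identity so that both summands on the right are evaluated at the same base point $x$. (Note that the hypothesis on $\xi$ appearing in the statement plays no role in the proof; it seems to be a vestigial typo, and indeed the conclusion depends only on $v$.)
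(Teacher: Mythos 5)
Your proof is correct and is essentially the paper's own argument: the paper first records (as equation \eqref{somma}) that the cocycle condition restricted to $g,h\in G_x$ gives $\Psi(x,hg)=\Psi(x,g)+\Psi(x,h)$, and then applies it exactly as you do to get Cauchy's functional equation, concluding by continuity. One small remark: the symbol $\x$ in the statement is a macro that prints as $v$, so the hypothesis ``$\x\in\liek$'' is just ``$v\in\liek$'' and is not vestigial.
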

\begin{proof}
  By \eqref{somma}
  \begin{gather*}
    \Psi(x, \exp(i (t+s)v) = \Psi(x, \exp(itv)\cd \exp(isv) ) =\\
    = \Psi(x, \exp(itv)) + \Psi(x,\exp (isv)).
  \end{gather*}
  This shows that $t \mapsto \Psi(x, \exp(itv))$ is a morphism from
  $(\R, +)$ to itself. Since it is continuous, it is a linear map.
\end{proof}

\begin{say}
  The linear function considered in the previous lemma is an analogue
  of Futaki invariant in the geometry of K\"ahler-Einstein metrics,
  see e.g. \cite[Chapter 3]{tian-libro} and \cite[p. 253]{thomas-GIT}.
\end{say}

\begin{lemma}
  \label{convstab}
  The function $\psi_x$ is geodesically convex on $X$. More precisely,
  if $\x \in \liek$ and $\alf(t) = g \exp(it\x) K$ is a geodesic in
  $X$, then $\psi_x \circ \alfa$ is either strictly convex or
  affine. The latter case occurs if and only if
  $ g \exp (\C \x) g \meno \subset G_x$.  In the case $g=e$, the
  function $\psi_x \circ \alfa$ is linear if
  $ \exp (\C \x) \subset G_x$ and strictly convex otherwise.
\end{lemma}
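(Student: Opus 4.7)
The plan is to reduce to the case $g = e$ via the cocycle identity \eqref{cociclo-psi}, and then apply \ref{P3} together with Lemma \ref{lineare-1}. Taking $h = \exp(it\x)$ in \eqref{cociclo-psi} gives
\begin{gather*}
  \psi_x(\alf(t)) = \psi_x(gK) + \psi_{g\meno x}(\exp(it\x) K).
\end{gather*}
Since $\psi_x(gK)$ is constant in $t$, the function $\psi_x \comp \alf$ has the same convexity behaviour as $\phi(t) := \psi_{g\meno x}(\exp(it\x)K)$, which by \eqref{defpsi} equals $\Psi(g\meno x, \exp(-it\x))$.

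By \ref{P3} applied to the point $g\meno x$ and the vector $-\x \in \liek$ one obtains $\phi''(t) \geq 0$ for all $t$, which yields convexity of $\psi_x \comp \alf$. To establish the strict dichotomy I would show that if $\phi''(t_0) = 0$ for some $t_0$, then $\phi'' \equiv 0$. The tool is again the cocycle condition \ref{P4}: writing $\exp(-i(t_0+s)\x) = \exp(-is\x)\exp(-it_0\x)$ and setting $y := \exp(-it_0\x) g\meno x$, \ref{P4} gives
\begin{gather*}
  \phi(t_0 + s) - \phi(t_0) = \Psi(y, \exp(-is\x)).
\end{gather*}
Differentiating twice in $s$ at $0$ and applying \ref{P3} to $y$ shows that $\phi''(t_0) = 0$ if and only if $\exp(\C\x) \subset G_y = \exp(-it_0\x) G_{g\meno x} \exp(it_0\x)$. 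Since $\exp(\C\x)$ is abelian and contains $\exp(\pm it_0 \x)$, conjugation by $\exp(it_0 \x)$ preserves it, so this condition is equivalent to $\exp(\C\x) \subset G_{g\meno x}$, independent of $t_0$.

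Hence either $\phi'' > 0$ everywhere, in which case $\psi_x \comp \alf$ is strictly convex, or $\phi'' \equiv 0$; in the latter situation Lemma \ref{lineare-1} applied to $g\meno x$ and $-\x$ (with $i(-\x) \in \lieg_{g\meno x}$) says $\Psi(g\meno x, \exp(-it\x))$ is linear in $t$, so $\psi_x \comp \alf$ is affine. The equivalent reformulation $\exp(\C\x) \subset G_{g\meno x} \iff g\exp(\C\x)g\meno \subset G_x$ gives the claimed condition, and the case $g = e$ assertion is immediate since $g\exp(\C\x)g\meno = \exp(\C\x)$. The main point to get right is the cocycle-based shift argument, which promotes the pointwise vanishing statement in \ref{P3} to a global one and thereby rules out the possibility that $\phi''$ could vanish on a proper subset of $\R$.
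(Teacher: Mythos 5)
Your proof is correct and follows essentially the same route as the paper: both use the cocycle condition to translate the base point along the geodesic, apply \ref{P3} at the shifted point to get the pointwise dichotomy, observe that the stabilizer condition is independent of $t_0$ by conjugation, and invoke Lemma \ref{lineare-1} for the affine case. Your two-step shift (first by $g$, then by $\exp(-it_0\x)$) is just a slight repackaging of the paper's single substitution $h = g\exp(it_0\x)$.
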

\begin{proof}
  Fix $t_0 \in \R$. Set $h: = g \exp(it_0\x)$. By \eqref{cociclo-psi}
  \begin{equation}
    \label{sopra-2}
    \begin{gathered}
      \psi_x (\alfa(t_0 + s)) = \psi_x ( h \exp(is\x) K) 
      =\psi_x( hK ) + \psi_{h\meno x} ( \exp(is\x)K).
    \end{gathered}
  \end{equation}
  Hence
  \begin{multline*}
    \frac{\mathrm{d^2}}{\mathrm{dt}^2 } \bigg |_{t=t_0}
    \psi_x(\alfa(t)) = \\
    =\frac{\mathrm{d^2}}{\mathrm{ds}^2 } \bigg |_{s=0} \psi_{h\meno x}
    (\exp(is\x) K) = \frac{\mathrm{d^2}}{\mathrm{ds}^2 } \bigg |_{s=0}
    \Psi(h\meno x, \exp(-is\x)).
  \end{multline*}
  Therefore \ref{P3} yields convexity of $\psi_x\circ \alfa$. If
  $\psi_x\circ \alfa$ is not strictly convex at $t_0$, then by
  \ref{P3} we conclude that $\exp(\C\x) \subset G_{h\meno x}$.  By the
  previous lemma $\psi_x(\exp(itv)K)=\Psi(h\meno x , \exp (-itv))$ is
  a linear function of $t$.  By \eqref{sopra-2} we have
  $\psi_x(\alfa(t)) = \psi_x(hK) + \psi_x(\exp(i(t-t_0)v)K)$.  This
  proves $\psi_x \circ \alpha$ is affine.  Moreover from
  $\exp(\C\x) \subset G_{h\meno x}$ it follows that
  $g \exp(\C \x) g\meno \subset G_x$. The same computation shows that
  conversely if $g \exp(\C \x) g\meno \subset G_x$ then
  $\psi_x\circ \alfa$ is affine. In case $g=e$ we know that
  $\psi_x(K) =0$ by \ref{somma}, so if the function is affine, it is
  in fact linear.
\end{proof}

\begin{say}
  The group $G$ acts isometrically on $X$ from the left: for $g\in G$
  the map
 \begin{gather*}
    L_g : X \ra X , \qquad L_g(hK) :=ghK,
  \end{gather*}
  is an isometry.  We will sometimes write simply $gx$ for
  $L_g(x)$. The cocycle condition \ref{P4} is equivalent to the
  following identity between two functions and a constant:
  \begin{gather}
    \label{cociclo-3}
    L_g^* \psi_x = \psi_{g\meno x} + \psi_x(gK).
  \end{gather}
\end{say}

\begin{say}\label{say-def-la}
  Since $\psi_x $ is Lipschitz, we can apply the machinery of
  \ref{ss1}-\ref{ss2}. For $x\in \spaz$ denote by $\la_x$ the function
  $(\psi_x)_\infty$:
  \begin{gather*}
    \la_x : \tits \ra \R, \qquad \la_x ([\ga]): = \lim_{t\to +\infty}
    \desudt \psi_x(\ga(t)).
  \end{gather*}
  Set
  \begin{gather}
    \label{def-la}
    \la : \spaz \times \tits \lra \R , \quad \la(x, p):= \la_x(p).
  \end{gather}
  By Lemma \ref{convex-function} $\la_x $ and hence $\la$ are
  well-defined and finite. We call $\la_x$ the \emph{maximal weight}
  of $x$.  Using the notation defined in \eqref{def-e} for
  $v\in S(\liek)$ we have
  \begin{gather}
    \label{la-exp}
    \la_x (\e(v)) = \lim_{t\to +\infty} \desudt \psi_x(\exp(itv)K) =
    \lim_{t\to +\infty} \desudt \Psi(x, \exp(-itv)).
  \end{gather}
\end{say}

\begin{say}
  \label{say-azione-tits}
  If $g\in G$ and $\ga$ is a unit speed geodesic, then also
  $g \circ \ga$ is a unit speed geodesic and clearly
  $\ga \sim \ga' \Leftrightarrow g\circ \ga \sim g \circ \ga'$. Thus
  setting
  \begin{gather*}
    g \cd [\ga]: = [g\circ \ga]
  \end{gather*}
  defines an action of $G$ on $\tits$.  Using the bijection
  $\e : S(\liek) \ra \tits$, introduced in \eqref{def-e}, we get a
  action of $G$ on $S(\liek)$:
  \begin{gather}
    \label{azione-tits}
    g \cd \x : = \e\meno ( g\cd e(\x)).
  \end{gather}
  This action is continuous with respect to the sphere topology on
  $\tits$ (see e.g \cite[p. 41]{borel-ji-libro}), but it is not
  smooth.
\end{say}

\begin{lemma}
  \label{lemma-la-equiv}
  For any $x\in \spaz$, any $g\in G$ and any $p\in \tits$
  \begin{gather}
    \la_{g\meno x}(p) = \la_x(g \cd p).
    \label{la-equiv}
  \end{gather}
\end{lemma}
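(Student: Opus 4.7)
I would reduce the identity directly to the cocycle condition \eqref{cociclo-psi}. First I would pick a unit speed geodesic $\ga$ representing $p$, which by \ref{ss1} we may take to pass through the origin, so $\ga(t) = \exp(itv)K$ for some $v\in S(\liek)$. Since $L_g$ is an isometry of $X$, the curve $g\circ\ga$ is again a unit speed geodesic, hence $g\cd p = [g\circ\ga]$ by the definition of the $G$-action on $\tits$ given in \ref{say-azione-tits}.

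Next I would apply the cocycle condition \eqref{cociclo-psi} with $h = \exp(itv)$:
\begin{gather*}
  \psi_x(g\circ \ga(t)) = \psi_x(g \exp(itv) K) = \psi_x(gK) + \psi_{g\meno x}(\exp(itv)K) = \psi_x(gK) + \psi_{g\meno x}(\ga(t)).
\end{gather*}
Since $\psi_x(gK)$ is constant in $t$, differentiating in $t$ and passing to the limit $t\to +\infty$ (which exists on both sides by Lemma \ref{convex-function} applied to $\psi_x$ along $g\circ \ga$ and to $\psi_{g\meno x}$ along $\ga$) yields
\begin{gather*}
  \la_x([g\circ \ga]) = \la_{g\meno x}([\ga]),
\end{gather*}
which is exactly \eqref{la-equiv}.

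There is essentially no obstacle here: once the cocycle condition is invoked the proof is a two-line computation. The only small technical point worth flagging is that $g\circ \ga$ must be a unit speed geodesic in order that the left-hand limit computes $\la_x(g\cd p)$ via \eqref{la-exp}; this is guaranteed by the isometric action of $G$ on $X$.
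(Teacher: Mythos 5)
Your proof is correct and follows essentially the same route as the paper: both arguments reduce the identity to the cocycle condition (you use the form \eqref{cociclo-psi}, the paper uses the equivalent form \eqref{cociclo-3}), observe that the additive constant $\psi_x(gK)$ disappears upon differentiating in $t$, and pass to the limit. The only cosmetic difference is that you normalize the representative geodesic to pass through the origin, whereas the paper works with an arbitrary representative; this changes nothing of substance.
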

\begin{proof}
  Assume that $p=[\ga]$ for some geodesic $\ga$ in $X$. Then
  $g \cd p = [g \circ \ga]$.  By \eqref{cociclo-3} we have
  $\psi_{g\meno x} = L_{g} ^* \psi_x - \psi_x(g K)$. Since
  $\psi_x(g K)$ does not depend on $t$,
  \begin{gather*}
    \desudt \psi_{g\meno x}(\ga(t)) = \desudt \psi_x( g \cd \ga(t)).
  \end{gather*}
  The result follows immediately.
\end{proof}

\begin{lemma}\label{critical-point}
  Let $x\in \misu$. The following conditions are equivalent:
  \begin{enumerate}
  \item $g\in G$ is a critical point of $\Psi(x, \cd)$;
  \item $\fun(gx) =0$;
  \item $g\meno K$ is a critical point of $\psi_x$.
  \end{enumerate}
\end{lemma}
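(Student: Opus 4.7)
The plan is to split the three-way equivalence into the two pieces (1)$\Leftrightarrow$(3) and (1)$\Leftrightarrow$(2), which are of rather different character.

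For (1)$\Leftrightarrow$(3), I would exploit the fact that the very definition $\psi_x(gK)=\Psi(x,g\meno)$ exhibits $\psi_x$ as the pushforward of (the function descending from) $\Psi(x,\cd)$ along the chain
$$G\ \xrightarrow{\ \pi_L\ }\ K\backslash G\ \xrightarrow{\ \iota\ }\ G/K,$$
where $\pi_L$ is the natural submersion (along whose fibres $\Psi(x,\cd)$ is constant by \ref{P2}) and $\iota$ is the diffeomorphism induced by $g\mapsto g\meno$. Since $\pi_L$ is a surjective submersion and $\iota$ a diffeomorphism, critical points transport bijectively; concretely, $g\in G$ is critical for $\Psi(x,\cd)$ if and only if $\iota(\pi_L(g))=g\meno K$ is critical for $\psi_x$.

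For (1)$\Leftrightarrow$(2), the idea is that left $K$-invariance already kills half of $T_gG$. Using the splitting $T_gG=dR_g(\liek)\oplus dR_g(i\liek)$, property \ref{P2} gives, for every $\xi\in\liek$,
$$\deze\Psi(x,\exp(t\xi)g)=\deze\Psi(x,g)=0,$$
so $d\Psi(x,\cd)|_g$ already vanishes on $dR_g(\liek)$. Hence $g$ is critical if and only if the derivative vanishes on $dR_g(i\liek)$. The cocycle condition \ref{P4} rewrites this transverse derivative as a derivative at the identity based at $gx$: for $\xi\in\liek$,
$$\Psi(x,\exp(it\xi)g)=\Psi(x,g)+\Psi(gx,\exp(it\xi)),$$
so differentiating at $t=0$ and invoking the definition \eqref{momento-astratto} of $\fun$ yields $\sx\fun(gx),\xi\xs$. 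Consequently criticality at $g$ is equivalent to $\fun(gx)=0$, which is (2).

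The argument is essentially bookkeeping; the only thing that requires care is keeping straight the distinction between left and right translations and the inversion that swaps $K\backslash G$ with $G/K$, so that the bijection of critical points in (1)$\Leftrightarrow$(3) is indeed the one given by $g\mapsto g\meno K$ as stated. No further input beyond \ref{P1}--\ref{P6} is needed.
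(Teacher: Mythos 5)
Your proposal is correct and follows essentially the same route as the paper: the cocycle condition \ref{P4} identifies the derivative of $\Psi(x,\cd)$ at $g$ in the directions $dR_g(i\liek)$ with $\sx\fun(gx),\cd\xs$, while left $K$--invariance \ref{P2} kills the remaining directions $dR_g(\liek)$, and the passage to $\psi_x$ via the quotient and inversion handles (1)$\Leftrightarrow$(3). The paper states these steps more tersely, but the content is identical.
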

\begin{proof}
  Let $\x \in \liek$. Using the cocycle condition \ref{P4}, one gets
  \begin{gather*}
    \Psi(x, \exp(it\x )g ) = \Psi (x,g)+ \Psi (gx,\exp(it\x ) ).
  \end{gather*}
  Therefore
  \begin{gather}
    \label{derivata-ovunque}
    \deze \Psi(x, \exp(it\x ) g) = \deze \Psi(gx, \exp(it\x )) = \sx
    \fun (gx), \x \xs.
  \end{gather}
  Since for any $k\in K$, $\Psi(x,kg)=\Psi(x,g)$, then $\fun (gx)=0$
  if and only if $g$ is a critical point of $\Psi (x,\cdot)$ if and
  only if $g\meno K$ is a critical point of $\psi_x$.
\end{proof}

\begin{say}
  Set
  \begin{gather*}
    \fun^\x (x): = \sx \fun (x), \x \xs.
  \end{gather*}
  If $t,t_0\in \R$, then
  $\exp(i(t+t_0)\x ) = \exp (it\x) \cd \exp(it_0\x)$. Setting
  $g=\exp(it_0\x)$ in \eqref{derivata-ovunque} yields
  \begin{gather}
    \label{derivata-t}
    \begin{gathered}
      \frac{\mathrm{d}}{\mathrm{dt} } \bigg |_{t=t_0} \Psi
      (x,\exp(it\x ))=
      \deze \Psi (x,\exp(it\x) \cd \exp(it_0\x))= \\
      = \fun^\x (\exp(it_0 \x )\cdot x).
    \end{gathered}
  \end{gather}
\end{say}

\begin{say}
  \label{def-compatibile}
  Let $U$ be compact Lie group and let $U^\C$ be its complexification
  which is a reductive complex algebraic group.  The map
  $f: U \times i\liu \ra U^\C$, $f(g, \x ) = g \cd \exp \x $ is a
  diffeomorphism.  If $H\subset U^\C$ is a closed subgroup, set
  $L:=H\cap U$ and $\liep:= \lieh \cap i\liu$.  We say that $H$ is
  \enf{compatible} (\cite{heinzner-schwarz-stoetzel,
    heinzner-stoetzel-global}) if $f (L \times \liep) = H$. The
  restriction of $f$ to $L\times \liep$ is then a diffeomorphism onto
  $H$. It follows that $L$ is a maximal compact subgroup of $H$ and
  that $\lieh = \liel \oplus \liep$.  Note that $H$ has finitely many
  connected components.

\end{say}

 \begin{prop}\label{compatible}
   If $\fun (x)=0$, then $G_x$ is compatible.
 \end{prop}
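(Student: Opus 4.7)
My plan is to combine the geodesic convexity of $\psi_x$ with the strict-convex/affine dichotomy of Lemma~\ref{convstab}. The starting observation is that $\fun(x)=0$ forces $\psi_x$ to attain its global minimum at the origin $o=eK$ and that this minimum is $0$. Indeed, by Lemma \ref{critical-point} the point $o$ is a critical point of $\psi_x$, and on the Hadamard manifold $X=G/K$ any critical point of a geodesically convex function is a global minimum. Since $\Psi(x,e)=0$ by \ref{zero}, one has $\psi_x(o)=0$ and hence $\psi_x\ge 0$ on all of $X$.

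Next I take an arbitrary $g\in G_x$ and show that it lies in $(G_x\cap K)\cdot\exp(\lieg_x\cap i\liek)$. Setting $h=g^{-1}$ in the cocycle identity \eqref{cociclo-psi} and using $g^{-1}x=x$ one finds
\[
0 \;=\; \psi_x(o) \;=\; \psi_x(gK)+\psi_{g^{-1}x}(g^{-1}K) \;=\; \psi_x(gK)+\psi_x(g^{-1}K),
\]
and, both summands being nonnegative, $\psi_x(gK)=\psi_x(g^{-1}K)=0$. Using the polar decomposition $G=K\cdot\exp(i\liek)$, write $g=k\exp(\eta)$ with $k\in K$ and $\eta=i\xi$, $\xi\in\liek$. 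Then $g^{-1}K=\exp(-\eta)K$ is the endpoint at $t=1$ of the unit-speed geodesic $\alpha(t)=\exp(-it\xi)K$ issuing from $o$. The function $\psi_x\circ\alpha$ is convex, nonnegative, and vanishes at both $t=0$ and $t=1$. In particular it cannot be strictly convex, so Lemma \ref{convstab} (with $g=e$ and $\xi$ replaced by $-\xi$) forces $\psi_x\circ\alpha$ to be affine on $\R$, and yields
\[
\exp(\C\xi) \;=\; \exp(\C\eta) \;\subset\; G_x.
\]

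From this I read off compatibility directly: $\exp(\eta)\in G_x$, hence $k=g\exp(-\eta)\in G_x\cap K$; and $\exp(t\eta)\in G_x$ for all $t\in\R$ gives $\eta\in\lieg_x\cap i\liek$. Thus $g\in(G_x\cap K)\cdot\exp(\lieg_x\cap i\liek)$, which is the nontrivial inclusion of compatibility; the reverse inclusion is immediate from $\exp(\lieg_x)\subset G_x$ (noting that $G_x$ is closed in $G$ and therefore a Lie subgroup by Cartan's theorem). The main point of the argument, and the only nonformal step, is the passage from ``$\psi_x\circ\alpha$ is convex, $\ge 0$, and vanishes at two points'' to ``$\psi_x\circ\alpha$ is affine on all of $\R$'': without the dichotomy of Lemma~\ref{convstab} one would only conclude vanishing on the segment $[0,1]$, which is insufficient to produce the algebraic condition $\exp(\C\eta)\subset G_x$.
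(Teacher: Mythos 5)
Your proof is correct and follows essentially the same route as the paper's: both write $g=k\exp(i\xi)$ via the polar decomposition and reduce compatibility to showing $\exp(\C\xi)\subset G_x$, which both obtain by forcing the convex function $t\mapsto\Psi(x,\exp(it\xi))$ to degenerate on $[0,1]$ so that \ref{P3} applies. The only difference is cosmetic: the paper detects the degeneracy one derivative up, using equivariance of $\fun$ to get that the nondecreasing function $t\mapsto\fun^{\xi}(\exp(it\xi)\cdot x)$ vanishes at $t=0$ and $t=1$, whereas you detect it at the level of $\psi_x$ itself, via its global minimality at $o$ together with the cocycle identity.
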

 \begin{proof}
   Let $g\in G_x$. Then $g=k \exp (i\x)$ for some $k\in K$ and
   $\x\in \liek$. By Proposition \ref{equivarianza}, we have
   $\fun (\exp (i\x) x)=0$. Let $f(t):=\fun^{\x} (\exp (it\x)x)$. Then
   $f(0)=f(1)=0$ and  using \eqref{derivata-t}
   \begin{gather*}
     \desudt f(t)=\desudt \fun^{\x} (\exp
     (it\x)x)=\frac{\mathrm{d^2}}{\mathrm{dt}^2 } \Psi(x,\exp(it\x))
     \geq 0.
   \end{gather*}
   Therefore
   $\frac{\mathrm{d^2}}{\mathrm{dt}^2 } \Psi(x,\exp(it\x))=0$ for
   $0\leq t \leq 1$.  It follows from \ref{P3} that
   $\exp(\C\x) \subset G_x$, so $v \in \lieg_x \cap i \liek$ and $G_x$
   is compatible.
 \end{proof}

 \section{Stability}
 \label{sec:stab-with-resp}

 Let $(\spaz,G, K) $ be as in the previous section, let $\Psi$ be a
 Kempf-Ness function and let $x\in \spaz$.

 \begin{defin}
   \label{stabilita}
   \begin{enumerate}
   \item $x$ is \enf{polystable} if $G\cd x \cap \mume \neq \vacuo$.
   \item $x$ is \enf{stable} if it is polystable and $\lieg_x$ is
     conjugate to a subalgebra of $\liek$.
   \item $x$ is \enf{semi--stable} if
     $\overline{G\cd x} \cap \mume \neq \vacuo$.
   \item $x$ is \enf{unstable} if it is not semi--stable.
   \end{enumerate}
 \end{defin}

 \begin{say}
   The four conditions above are $G$-invariant in the sense that if a
   point $x$ satisfies one of them, then every point in the orbit of
   $x$ satisfies the same condition. This is clear from the definition
   for polystability, semi--stability and unstability. To check that
   stability is also $G$-invariant it is enough to recall that
   $\lieg_{gx} = \Ad(g) (\lieg_x)$.
 \end{say}

  \begin{lemma}\label{intersezione-nulla}
    If $\lia \subset \lieg$ is a subalgebra which is conjugate to a
    subalgebra of $\liek$, then $\lia \cap i\liek =\{0\}$.
  \end{lemma}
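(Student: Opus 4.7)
The plan is to take an arbitrary $v \in \mathfrak{a} \cap i\mathfrak{k}$ and show that $\mathrm{ad}(v) = 0$, after which the connectedness of $G$ forces $v$ to lie in $\mathfrak{k} \cap i\mathfrak{k} = \{0\}$. By assumption I can write $\mathfrak{a} = \mathrm{Ad}(g)\mathfrak{b}$ for some $g \in G$ and some subalgebra $\mathfrak{b} \subset \mathfrak{k}$, so $v$ admits two useful descriptions: on one hand $v = \mathrm{Ad}(g)\eta$ with $\eta \in \mathfrak{b} \subset \mathfrak{k}$, and on the other hand $v = i\xi$ with $\xi \in \mathfrak{k}$. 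The key idea is to compute the spectrum of $\mathrm{ad}(v)\colon \mathfrak{g} \to \mathfrak{g}$ (thought of as a $\mathbb{C}$-linear operator on the complex Lie algebra $\mathfrak{g}$) in these two ways.

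The first description gives that $\mathrm{ad}(v)$ is conjugate (via $\mathrm{Ad}(g)$) to $\mathrm{ad}(\eta)$, and since $\eta \in \mathfrak{k}$ and $K$ is compact, $\mathrm{ad}(\eta)$ is skew-Hermitian for any $\mathrm{Ad}(K)$-invariant Hermitian inner product on $\mathfrak{g}$; in particular it is diagonalizable with purely imaginary eigenvalues. Hence $\mathrm{ad}(v)$ is diagonalizable with purely imaginary eigenvalues. The second description gives $\mathrm{ad}(v) = i\,\mathrm{ad}(\xi)$, and again $\mathrm{ad}(\xi)$ is diagonalizable with purely imaginary eigenvalues, so $\mathrm{ad}(v)$ has real eigenvalues. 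Since every eigenvalue of $\mathrm{ad}(v)$ must be simultaneously real and purely imaginary, all eigenvalues vanish, and then diagonalizability forces $\mathrm{ad}(v) = 0$.

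Thus $v$ lies in the center $Z(\mathfrak{g})$, and equivalently $\eta = \mathrm{Ad}(g)^{-1}v \in Z(\mathfrak{g})$ as well. Since $G = K^{\mathbb{C}}$ is connected, $\mathrm{Ad}(G)$ acts trivially on $Z(\mathfrak{g})$, so $v = \mathrm{Ad}(g)\eta = \eta$. But $\eta \in \mathfrak{k}$ and $v \in i\mathfrak{k}$, and the real decomposition $\mathfrak{g} = \mathfrak{k} \oplus i\mathfrak{k}$ forces $v = 0$.

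I do not expect any real obstacle. The only slightly subtle point is that the argument for $\mathrm{ad}(v) = 0$ uses both the reality/imaginarity of the eigenvalues and the semisimplicity of $\mathrm{ad}(v)$ (since a nonzero central element could a priori survive the eigenvalue comparison, but not the conclusion $v = \mathrm{Ad}(g)\eta = \eta$); connectedness of $G$ is what finally rules this out.
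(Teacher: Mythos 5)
Your proof is correct. The engine is the same as in the paper's proof --- an operator that is simultaneously similar to a skew-Hermitian operator and diagonalizable with real eigenvalues must vanish --- but you run it through a different representation. The paper chooses a faithful embedding $j: G \hookrightarrow \Gl(n,\C)$ with $j(K) \subset \U(n)$, so that for $v \in i\liek$ with $v = \Ad(g)w$, $w \in \liek$, the matrix $j(v)$ is Hermitian and similar to the skew-Hermitian $j(w)$; the spectrum is then $\{0\}$, Hermitian-ness gives $j(v)=0$, and faithfulness immediately gives $v=0$ with no further work. You instead use the adjoint representation, which has the advantage of being intrinsic (no appeal to the existence of a faithful unitary embedding of $K^\C$), but is not faithful on the center, so you need the extra step showing that a central $v$ satisfies $v = \Ad(g)\eta = \eta \in \liek \cap i\liek = \{0\}$; your handling of that step (connectedness of $G$ implies $\Ad(G)$ fixes the center pointwise, since $\ad(X)z = 0$ for central $z$) is correct, and you rightly flag it as the one place where the naive eigenvalue comparison alone would not suffice. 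Both arguments are short: the paper's is slightly more direct once the embedding is granted, while yours is self-contained modulo the standard fact that $\Ad(K)$ preserves some Hermitian inner product on $\lieg$.
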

  \begin{proof}
    It is enough to show that $\Ad (g)( \liek) \cap i \liek =\{0\}$
    for any $g\in G$.  Choose an embedding
    $j:G\hookrightarrow \mathrm{GL}(n,\C)$ such that
    $j(K) \subset\mathrm{U}(n)$.  If $\x \in i\liek$ and
    $\x = \Ad (g) w$ with $w \in \liek$, then $ j(\x )$ is a Hermitian
    matrix, hence with real eigenvalues, while $j(w)$ is skew-Hermitan
    with imaginary eigenvalues. Since these matrices are similar the
    only eigenvalue is 0. Thus $j(\x ) =0$ and $\x =0$.
  \end{proof}

\begin{teo}\label{stabile}
  The following conditions are equivalent: (1) $x\in \spaz$ is stable,
  (2) $\la_x >0 $ on $\tits$, (3) $\psi_x$ is an exhaustion function.
\end{teo}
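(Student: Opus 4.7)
The plan is to observe that (2) and (3) are immediate from Lemma \ref{convex-function} applied to $\psi_x$, which is smooth by \ref{P1}, convex by Lemma \ref{convstab}, and globally Lipschitz by \ref{P5}; indeed $\la_x=(\psi_x)_\infty$ by the definition in \ref{say-def-la}. So the real work is (3) $\Leftrightarrow$ (1).

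For (3) $\Rightarrow$ (1), I would first extract polystability from a minimum of $\psi_x$. Since $\psi_x$ is continuous, bounded below and proper, it attains its infimum at some $g\meno K$, which by \ref{P1} is a critical point, so $\fun(gx)=0$ by Lemma \ref{critical-point} and $x$ is polystable. Set $y:=gx$. It remains to check $\lieg_y\cap i\liek=0$, since this (via Proposition \ref{compatible} together with Lemma \ref{intersezione-nulla}) is equivalent to $\lieg_x$ being conjugate to a subalgebra of $\liek$. Suppose instead that $iv\in\lieg_y$ for some $v\in\liek\setminus\{0\}$. Then $\exp(itv)\in G_y$ for all $t$, so Lemma \ref{lineare-1} shows that $f(t):=\psi_y(\exp(itv)K)=\Psi(y,\exp(-itv))$ is linear in $t$; since $f(0)=0$ by \ref{zero} and $f'(0)=-\sx\fun(y),v\xs=0$, we must have $f\equiv0$, hence $\la_y(\e(v))=0$. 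But then Lemma \ref{lemma-la-equiv} yields $\la_x(g\meno\cd\e(v))=0$, contradicting (2) (which is equivalent to (3)).

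For (1) $\Rightarrow$ (2), I would use that both stability and the condition $\la_x>0$ are $G$-invariant (the second by Lemma \ref{lemma-la-equiv}) to replace $x$ by a point in $G\cd x\cap\mume$; thus assume $\fun(x)=0$. The stability hypothesis and Lemma \ref{intersezione-nulla} give $\lieg_x\cap i\liek=0$, so for any $v\in S(\liek)$ one has $iv\notin\lieg_x$, and in particular $\exp(\C v)\not\subset G_x$. By Lemma \ref{convstab} this forces $f(t):=\psi_x(\exp(itv)K)$ to be strictly convex. Together with $f(0)=0$ and $f'(0)=-\fun^v(x)=0$, strict convexity implies that $f'$ is strictly increasing from $0$, so $\la_x(\e(v))=\lim_{t\to+\infty}f'(t)\geq f'(1)>0$.

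The delicate point is the implication (3) $\Rightarrow$ (1), where I have to rule out $\liep\neq 0$ without knowing that the stabilizer $G_y$ is a complex subgroup. In a genuine K\"ahler setting the step $iv\in\lieg_y\Rightarrow\exp(\C v)\subset G_y$ would be automatic, but in this abstract framework it must be extracted from the Kempf-Ness axioms: the cocycle condition \ref{P4}, through Lemma \ref{lineare-1}, makes $\Psi(y,\exp(itv))$ linear in $t$ as soon as $iv\in\lieg_y$, and this linearity together with $\fun(y)=0$ is exactly what forces the offending maximal weight $\la_y(\e(v))$ to vanish.
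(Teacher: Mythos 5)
Your proposal is correct and follows essentially the same route as the paper: (2)$\Leftrightarrow$(3) via Lemma \ref{convex-function}, (3)$\Rightarrow$(1) by taking a minimum of the exhaustion $\psi_x$ and killing $\lieg_y\cap i\liek$ through Lemma \ref{lineare-1} and Proposition \ref{compatible}, and (1)$\Rightarrow$(2) by moving to a zero of $\fun$ and using Lemma \ref{intersezione-nulla}. The only (harmless) variations are that you rule out a nonzero $iv\in\lieg_y$ by noting the linear function forces $\la_y(\e(v))=0$ rather than by observing that a linear function cannot be an exhaustion, and in (1)$\Rightarrow$(2) you invoke strict convexity from Lemma \ref{convstab} where the paper argues by contradiction from the vanishing of the second derivative.
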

\begin{proof}
  (1) $\Rightarrow $ (2). Assume that $\fun(gx) =0$ for some $g\in G$.
  By Lemma \ref{critical-point}, $g$ is a critical point of
  $\Psi(x, \cd)$. Set $y=g x$.  We start by proving that
  $\lambda_y   >0$ on $\tits$. Using \eqref{la-exp} and the fact that 
  $\Psi(y, \cd)$ is a convex function we get
  \begin{gather*}
    \lambda_y (\e(-\x) )\geq \desudtzero \Psi(y, \exp(it\x ) )= \sx \fun
    (y), \x \xs=0.
  \end{gather*}
  If $\lambda_y (\e(-\x) )=0$ for some $\x \in S(\liek)$, then
  $\frac{\mathrm{d^2}}{\mathrm{dt}^2 } \Psi(x,\exp(it \x ))=0$ for any
  $t \geq 0$. Using \ref{P3} it follows that
  $\exp(\C \x ) \subset G_y$, so $i\x \in \lieg_y\cap i\liek$.  Since
  $x$ is stable, $\mathfrak g_y = \Ad(g) (\lieg_x)$ is conjugate to a
  subalgebra of $\liek$, so Lemma \ref {intersezione-nulla} implies
  that $\x =0$.  Since we are assuming $\x \in S(\liek)$ this is
  absurd. Therefore $\la_y >0$ as desired.  By Lemma
  \ref{lemma-la-equiv}
  $ \la_x(p) = \la _{g\meno y } (p) = \la_y (g \cd p ) $. Thus
  $\la_x >0 $ on $\tits$.  By \ref{P5} $\psi_x$ is Lipschitz
  continuous. So (2) $\Leftrightarrow$ (3) is the content of Lemma
  \ref{convex-function}.  Finally we prove that (3) $\Rightarrow $
  (1). Since $\psi_x : X=G/K \ra \R$ is an exhaustion, there is a
  minimum point $g K \in X$. Set $y:=gx$. Thus
  $y \in G\cd x \cap \mume$ and $x$ is polystable. To complete the
  proof we need to show that $\lieg_y \subset \liek$.  Let
  $i\x \in \lieg_y \cap i\liek$.  By Lemma \ref{lineare-1}
  $ \Psi(y, \exp(it\x ))$ is linear function of $t$.  On the other
  hand (3) implies that this function is an exhaustion, if
  $\x \neq 0$. Since a linear function cannot be an exhaustion, we
  conclude that $\x =0$. This proves that
  $\lieg_y \cap i\liek = \{0\}$.  By Proposition \ref{compatible} the
  stabilizer $G_y$ is compatible. In particular
  $\lieg_y = (\lieg_y \cap \liek) \oplus( \lieg_y \cap i \liek)$.  So
  $\mathfrak g_y \subset \liek$. This proves that $x$ is stable.
\end{proof}

\begin{cor}
  \label{stabcomp}
  If $x\in \spaz$ is stable, then $G_x$ is compact.
\end{cor}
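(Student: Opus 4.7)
The plan is to reduce to the polystable representative and exploit compatibility, mirroring the final steps of the proof of Theorem \ref{stabile}. Since $x$ is stable, it is in particular polystable, so there exists $g\in G$ with $\fun(gx)=0$. Set $y:=gx$ and note that stability is $G$-invariant, so $y$ is stable as well and $\lieg_y=\Ad(g)(\lieg_x)$ is conjugate to a subalgebra of $\liek$.

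Next I would apply Proposition \ref{compatible} to $y$, obtaining that $G_y$ is compatible. By \ref{def-compatibile} this means
\begin{gather*}
  G_y = (G_y\cap K)\cdot\exp(\lieg_y\cap i\liek),
\end{gather*}
that the map $(L,\liep)\mapsto L\cdot\exp(\liep)$ is a diffeomorphism, and that $G_y$ has finitely many connected components. On the other hand, since $\lieg_y$ is conjugate to a subalgebra of $\liek$, Lemma \ref{intersezione-nulla} gives $\lieg_y\cap i\liek=\{0\}$. Therefore $\exp(\lieg_y\cap i\liek)=\{e\}$, and the compatible decomposition collapses to $G_y=G_y\cap K$, a closed subgroup of the compact group $K$, hence compact.

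Finally, since $G_x=g^{-1}G_y g$ is conjugate to $G_y$ inside $G$, it is also compact. No step really poses an obstacle here: the only subtlety is remembering that compatibility of $G_y$ genuinely requires the full Cartan-type decomposition provided by Proposition \ref{compatible} (mere inclusion $\lieg_y\subset\liek$ would not suffice if $G_y$ had infinitely many components), but this is exactly what \ref{def-compatibile} guarantees.
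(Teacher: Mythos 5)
Your proof is correct and follows essentially the same route as the paper: pass to a zero of the momentum mapping in the orbit, invoke Proposition \ref{compatible} to get the compatible decomposition of $G_y$, observe that $\lieg_y\cap i\liek=\{0\}$ so that $G_y=G_y\cap K$ is compact, and conjugate back. The only cosmetic difference is that you derive $\lieg_y\cap i\liek=\{0\}$ from Lemma \ref{intersezione-nulla} and the definition of stability, whereas the paper cites the inclusion $\lieg_y\subset\liek$ already established in the proof of Theorem \ref{stabile}; these are the same fact.
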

\begin{proof}
  Let $g\in G$ be such that $\fun(gx)=0$ and set $y=gx$. By
  Proposition \ref{compatible} $G_y$ is compatible, so has only
  finitely many connected components. Moreover $G_y^0$ is compact
  since $\lieg_y \subset \liek$ as shown in the previous proof. It
  follows that $G_y$ and $G_x = g\meno G_y g $ are both compact.
\end{proof}

\begin{say}\label{sotto-struttura}
  If $\spaz'$ is a $G$-invariant subspace of $ \spaz $, the
  restriction of $\Psi $ to $G\times \spaz'$ is a Kempf-Ness function
  for $(\spaz', G, K)$. The functions $\la$ and $\fun$ for
  $(\spaz', G, K)$ are simply the restrictions of those for $\spaz$.
\end{say}

\begin{say}\label{sotto-struttura-2}
  If $K'\subset K$ is a closed subgroup and $G':= (K')^\C$, there are
  totally geodesic inclusions $X':= G'/K' \hookrightarrow X$ and
  $\partial_\infty X' \subset \tits$.  If $\Psi$ is a Kempf-Ness
  function for $(G,K, \spaz)$, then
  $ \Psi^{K'}:=\Psi\restr{\spaz\times G'}$ is a Kempf-Ness function
  for $(G', K', \spaz)$.  The related functions are
  \begin{gather*}
    \fun^{K'}: \spaz \ra {\liek'}^* , \qquad \fun^{K'}(x):=
    \fun(x)\restr
    {\liek'},\\
    \psi^{K'}_x := \psi _x \restr{X'}, \qquad \la^{K'} = \la
    \restr{\spaz\times \tits '}.
  \end{gather*}
\end{say}

The following Corollary is analogous to the stability part in the
Hilbert-Mumford criterion.
\begin{cor}\label{Hilbert-Mumford}
  A point $x \in \spaz$ is $G$-stable if and only if it is
  $T^\C$-stable for any compact torus $T \subset K$.
\end{cor}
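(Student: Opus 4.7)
The plan is to prove both directions by combining the numerical criterion for stability (Theorem \ref{stabile}, the equivalence of stability with positivity of the maximal weight on the Tits boundary) with the compatibility of Kempf-Ness functions under restriction to closed subgroups (\ref{sotto-struttura-2}). The whole argument is essentially a restriction / extension statement on Tits boundaries.

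For the forward direction, suppose $x$ is $G$-stable. By Theorem \ref{stabile} this means $\la_x > 0$ on $\tits$. Fix any compact torus $T\subset K$ and let $X' := T^\C/T$. By \ref{sotto-struttura-2} the function $\Psi^T$ obtained by restricting $\Psi$ to $\spaz\times T^\C$ is a Kempf-Ness function for $(\spaz, T^\C, T)$, its associated maximal weight $\la^T_x$ is simply the restriction of $\la_x$ to $\partial_\infty X'\subset \tits$. Since $\la_x>0$ everywhere, $\la^T_x>0$ on $\partial_\infty X'$, and another application of Theorem \ref{stabile} (to the triple $(\spaz, T^\C, T)$) gives that $x$ is $T^\C$-stable.

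For the converse, suppose $x$ is $T^\C$-stable for every compact torus $T\subset K$. We must show that $\la_x > 0$ on all of $\tits$. Take any $p \in \tits$ and write $p = \e(v)$ for a unique unit vector $v \in S(\liek)$, using the bijection \eqref{def-e}. Let $T$ be the closure of $\exp(\R v)$ in $K$: this is a compact connected abelian subgroup of $K$, hence a torus, and by construction $v\in \liet$, so $p = \e(v)$ lies in $\partial_\infty X'$ where $X'=T^\C/T$. By assumption $x$ is $T^\C$-stable, so Theorem \ref{stabile} applied to $(\spaz, T^\C, T)$ yields $\la^T_x > 0$ on $\partial_\infty X'$. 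Using the identity $\la^T_x = \la_x\restr{\partial_\infty X'}$ from \ref{sotto-struttura-2} we conclude $\la_x(p) = \la^T_x(p) > 0$. Since $p$ was arbitrary, $\la_x > 0$ on $\tits$, and Theorem \ref{stabile} gives that $x$ is $G$-stable.

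No step is really an obstacle: the content of the corollary is packaged into Theorem \ref{stabile} together with the restriction formula $\la^T = \la\restr{\spaz\times \partial_\infty X'}$ of \ref{sotto-struttura-2}. The only point that needs a moment of attention is the observation that every direction in $\tits$ is captured by \emph{some} torus, which is guaranteed by taking the closure $T=\overline{\exp(\R v)}$ for the chosen unit vector $v$ representing $p$.
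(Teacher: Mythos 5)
Your proof is correct and follows essentially the same route as the paper: both directions reduce to Theorem \ref{stabile} via the restriction formula $\la^T_x = \la_x\restr{\partial_\infty X'}$ of \ref{sotto-struttura-2}, and for the converse the paper likewise chooses, for each $v\in S(\liek)$, a torus $T$ with $v\in\liet$ (your explicit choice $T=\overline{\exp(\R v)}$ is one way to do this). No gaps.
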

\begin{proof}
  Assume that $x$ is $G$-stable. Then $\la_x >0$. So for any torus
  $T\subset G$, we have $\la^T_x>0$. By Theorem \ref{stabile} $x$ is
  $T^\C$--stable. Conversely assume that $x$ is $T^\C$--stable for any
  compact torus $T\subset K$. If $\x \in S(\liek)$ choose a torus $T$
  such that $\x \in \liet$.  Then $\la_x(\e(\x)) = \la^T_x(\e(\x)) $
  and $\la^T_x(\e(\x)) >0$ by Theorem \ref{stabile}, since $x$ is
  $T^\C$--stable. Hence $\la_x>0$ on $\tits$. Using again Theorem
  \ref{stabile} we conclude that $x$ is $G$-stable.

\end{proof}

\begin{lemma} [\protect{\cite[Prop. 3.11
    (5)]{teleman-symplectic-stability}}]
  \label{semicontinua}
  The function $ \la : \spaz \times \tits \lra \R $ is lower
  semicontinuous if $\tits $ is endowed with the sphere topology (see
  \ref{ss1}).
\end{lemma}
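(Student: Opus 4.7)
The plan is to exhibit $\la$ as a pointwise supremum of a family of jointly continuous functions on $\spaz \times S(\liek)$; since the sphere topology is the one making $\e : S(\liek) \to \tits$ a homeomorphism (see \ref{ss1}), lower semicontinuity of $\la$ on $\spaz \times \tits$ will follow, because a pointwise supremum of continuous functions is always lower semicontinuous.

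To produce such a representation, I would fix $x \in \spaz$ and $v \in S(\liek)$ and set $h_v(t) := \Psi(x, \exp(-itv))$. By \ref{P3} the function $h_v$ is convex in $t$, so $h_v'$ is non-decreasing, and \eqref{la-exp} becomes
\[ \la_x(\e(v)) = \lim_{t \to +\infty} h_v'(t) = \sup_{t > 0} h_v'(t). \]
Next I would use the identity \eqref{derivata-t}, which expresses these derivatives through the momentum mapping: for any $\eta \in \liek$,
\[ \frac{d}{dt}\Psi(x, \exp(it\eta)) = \sx \fun(\exp(it\eta)\cdot x), \eta \xs. \]
Specializing to $\eta = -v$ gives $h_v'(t) = -\sx \fun(\exp(-itv)\cdot x), v \xs$, whence
\[ \la(x, \e(v)) = \sup_{t > 0} \bigl( -\sx \fun(\exp(-itv)\cdot x), v \xs \bigr). \]

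For each fixed $t > 0$ the function $(x, v) \mapsto -\sx \fun(\exp(-itv)\cdot x), v \xs$ is jointly continuous on $\spaz \times S(\liek)$: the $G$-action on $\spaz$ is continuous by hypothesis, $\fun$ is continuous by \ref{P6}, and the duality pairing is bilinear. Taking the supremum over $t > 0$ therefore yields a lower semicontinuous function on $\spaz \times S(\liek)$, and transporting across the homeomorphism $\e$ gives the required lower semicontinuity on $\spaz \times \tits$.

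No serious obstacle is anticipated. The only point that deserves care is the equality $\lim_{t \to +\infty} h_v'(t) = \sup_{t > 0} h_v'(t)$; this is immediate from the monotonicity of $h_v'$ granted by the convexity condition \ref{P3}, which is exactly where the first half of that hypothesis is used.
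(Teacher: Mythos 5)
Your proof is correct and is essentially the paper's argument in different clothing: both reduce to the identity $\la(x,\e(v))=\sup_{t}\fun^{-v}(\exp(-itv)\cdot x)$, where each function of $(x,v)$ at fixed $t$ is jointly continuous by the continuity of the action and \ref{P6}, and the monotonicity in $t$ coming from \ref{P3} is what lets one pass from a single large $t_0$ (the paper's $\eps$-neighbourhood phrasing) to the supremum (your phrasing). The two write-ups are interchangeable.
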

\begin{proof}
  Fix $(x, p) \in \spaz\times \tits$ with $p=\e(v)$ and
  $v\in S(\liek)$. We need to show that for any $\eps>0$, there is a
  neighbourhood $A$ of $(x, v)$ in $\spaz\times S(\liek)$, such that
  for $(x', v')\in A$ we have $\la(x', \e(v')) > \la(x, p) - \eps$.
  By 
  \eqref{la-exp} and \eqref{derivata-t}
  \begin{gather*}
    \la_x (\e(v)) = \lim_{t \to +\infty} \fun^v (\exp(-itv) \cd x).
  \end{gather*}
  So given $\eps > 0$, there is a $t_0\in \R$ such that
  \begin{gather*}
    \fun^ \x ( \exp(-it_0\x) \cd x) > \la_x(p) -\eps.
  \end{gather*}
  By the continuity of the $G$--action on $\spaz$ and \ref{P6} the
  function
  \begin{gather*}
    f: \spaz\times S(\liek) \lra \R, \quad f (x', v') : =
    \fun^{v'}(\exp(-it_0v')\cd x')
  \end{gather*}
  is continuous.  Since $f(x, v) > \la_x(p) - \eps$ there is a
  neighbourhood $A$ of $(x, v) $ in $\spaz \times S(\liek)$ such that
  $f(x', v') > \la_x(p) - \eps$ for any $(x', v') \in A$.  Since the
  function $t \mapsto \fun^{\x'}(\exp( -it\x' )\cd x) $ is increasing,
  we get $\la(x', \e(v')) > \la_x(p) - \eps $ for any
  $(x', v') \in A$, as desired.
\end{proof}

\begin{cor}\label{stable-open-abstract-setting}
  The set of stable points is open in $\spaz$.
\end{cor}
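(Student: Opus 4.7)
The plan is to combine the numerical characterization of stability in Theorem \ref{stabile} with the lower semicontinuity of $\la$ from Lemma \ref{semicontinua}, exploiting compactness of the Tits boundary in the sphere topology.

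First I observe that via the homeomorphism $\e : S(\liek) \to \tits$ of \eqref{def-e} and the identification of the sphere topology, $\tits$ is compact because $S(\liek)$ is a sphere in a finite-dimensional vector space. Now suppose $x_0 \in \spaz$ is stable. By Theorem \ref{stabile} we have $\la_{x_0} > 0$ on $\tits$. Since $\la(x_0, \cdot)$ is lower semicontinuous on the compact space $\tits$ (it is the restriction of the jointly lower semicontinuous function of Lemma \ref{semicontinua}), it attains its infimum, so there exists $c > 0$ with $\la_{x_0}(p) \geq c$ for every $p \in \tits$.

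Next I use the joint lower semicontinuity of $\la$ on $\spaz \times \tits$. For every $p \in \tits$ we have $\la(x_0, p) > c/2$, so there exist open neighbourhoods $U_p$ of $x_0$ in $\spaz$ and $V_p$ of $p$ in $\tits$ such that
\begin{gather*}
  \la(x', q) > c/2 \qquad \text{for all } (x', q) \in U_p \times V_p.
\end{gather*}
The family $\{V_p\}_{p \in \tits}$ is an open cover of the compact space $\tits$, so it admits a finite subcover $V_{p_1}, \dots, V_{p_N}$. Setting $U := U_{p_1} \cap \cdots \cap U_{p_N}$, which is an open neighbourhood of $x_0$, we conclude that for any $x' \in U$ and any $q \in \tits$ one has $\la_{x'}(q) > c/2 > 0$. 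Applying Theorem \ref{stabile} in the other direction, every such $x'$ is stable, which shows that $U$ is contained in the stable locus. Hence the set of stable points is open.

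I do not expect any genuine obstacle: the only conceptual point is that one must use the sphere topology, for which $\tits$ is compact, in order to pass from pointwise positivity of $\la_{x_0}$ to a uniform lower bound and then spread it to a neighbourhood via lower semicontinuity.
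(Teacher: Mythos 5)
Your proof is correct and uses exactly the same ingredients as the paper: Theorem \ref{stabile}, the lower semicontinuity of $\la$ from Lemma \ref{semicontinua}, and the compactness of $\tits$ in the sphere topology. The paper packages the compactness argument more succinctly by noting that the projection $\spaz\times\tits\to\spaz$ is a closed map and that the stable locus is the complement of the image of the closed set $\{\la\le 0\}$; your finite-subcover tube-lemma argument is just an explicit unpacking of that same step.
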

\begin{proof}
  Let $\pi: \spaz \times \tits \ra \spaz$ be the projection on the
  first factor. Since $\tits$ is compact in the sphere topology and
  $\spaz$ is Hausdorff, $\pi$ is a closed map \cite[Thm. 2.5,
  p. 227]{dugundji}.  The set
  $A:=\{(x, p) \in \spaz \times \tits: \la(x, p) >0\}$ is open in
  $\spaz\times\tits$ since $\la$ is lower semicontinuous.  So its
  complement $E$ is closed and $\pi(E)$ is closed in $\spaz$.  By
  Theorem \ref{stabile} the set of stable points is the complement of
  $\pi(E)$. Therefore it is open.
\end{proof}

\section{Polystability and semi-stability}
\label{sec:polyst-semi-stab}

\begin{say}
  The aim of this section is to characterize polystability and
  semi-stability of $x \in \spaz$ in terms of the maximal weight
  $\la_x$.  In the classical case of a Hamiltonian action on a
  K\"ahler manifold these characterizations are due to Mundet i Riera
  \cite{mundet-Trans} and Teleman \cite {teleman-symplectic-stability}
  respectively. Our results are more general, since we deal with a
  Kempf-Ness function on a topological space. Nevertheless our
  hypothesis are stronger from another point of view: \ref{P5} is
  stronger than Assumption 1.2 in \cite{mundet-Trans}, while in the
  treatment of semi-stability we assume $\spaz$ compact, which is
  stronger than energy completeness (see
  \cite[Def. 3.8]{teleman-symplectic-stability}).

  We start with some technical lemmata.
\end{say}

\begin{lemma}\label{ziocantante}
  If $\x, \x'\in \liek$ commute and $i\x ' \in \lieg_x$ then
  \begin{multline*}
    \lim_{t\to +\infty} \desudt \Psi(x, \exp(it(\x+\x')) ) = \\
    =\lim_{t\to +\infty} \desudt \Psi(x, \exp(it\x) ) + \lim_{t\to
      +\infty} \desudt \Psi(x, \exp(it\x') ) .
  \end{multline*}
\end{lemma}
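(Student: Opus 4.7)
The key observation is that the hypothesis $i\xi' \in \lieg_x$ will combine with the cocycle condition to give an exact additive splitting of $\Psi(x, \exp(it(\xi+\xi')))$, after which the statement is just term-by-term differentiation.

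First I would note that since $\xi$ and $\xi'$ commute, the one-parameter subgroups they generate commute, so
\begin{equation*}
\exp(it(\xi+\xi')) = \exp(it\xi)\cdot\exp(it\xi').
\end{equation*}
Then I apply the cocycle condition \ref{P4} with $g = \exp(it\xi')$ and $h = \exp(it\xi)$ to obtain
\begin{equation*}
\Psi(x,\exp(it\xi')) + \Psi(\exp(it\xi')\cd x, \exp(it\xi)) = \Psi(x, \exp(it\xi)\exp(it\xi')).
\end{equation*}
The hypothesis $i\xi' \in \lieg_x$ means $\exp(it\xi') \in G_x$ for every $t \in \R$, hence $\exp(it\xi')\cd x = x$. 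Plugging this in yields the identity
\begin{equation*}
\Psi(x, \exp(it(\xi+\xi'))) = \Psi(x, \exp(it\xi)) + \Psi(x, \exp(it\xi'))
\end{equation*}
valid for every $t\in \R$.

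Next I differentiate this identity in $t$, obtaining the analogous additive relation among the three derivatives. The last step is to pass to the limit $t \to +\infty$. Each of the three functions $t\mapsto \Psi(x, \exp(it\eta))$ (for $\eta \in \{\xi, \xi', \xi+\xi'\}$) is convex by \ref{P3} and globally Lipschitz by \ref{P5}, so its derivative is non-decreasing and bounded, hence the three one-sided limits at $+\infty$ exist and are finite, exactly as in the first part of the proof of Lemma \ref{convex-function}. Passing to the limit in the additive identity for the derivatives gives the claim.

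No serious obstacle is expected: the proof is entirely formal once one recognizes that the commutativity of $\xi$ and $\xi'$ together with $i\xi' \in \lieg_x$ turns the a priori nonlinear cocycle into an exact sum. The only mild care is in justifying that the limits exist separately before summing; this is automatic from convexity and the Lipschitz property \ref{P5}.
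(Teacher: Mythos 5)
Your proof is correct and is essentially the paper's own argument: the authors likewise use commutativity to write $\exp(it(\x+\x'))=\exp(it\x)\exp(it\x')$, apply the cocycle condition \ref{P4}, and use $\exp(it\x')\in G_x$ to obtain the exact additive identity $\Psi(x,\exp(it(\x+\x')))=\Psi(x,\exp(it\x))+\Psi(x,\exp(it\x'))$, from which the statement follows by differentiating and letting $t\to+\infty$. Your extra remark that the three limits exist separately (by convexity and \ref{P5}, as in Lemma \ref{convex-function}) is the justification the paper leaves implicit in ``the result follows immediately.''
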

\begin{proof}
  Using the commutativity and the cocycle condition \ref{P4}
  \begin{gather*}
    \Psi(x, \exp(it(\x+\x')) ) = \Psi(x, \exp(it\x) \cd \exp(it\x') )
    \\
    = \Psi(\exp(it\x')\cd x, \exp(it\x))
    + \Psi( x, \exp(it\x')) \\
    = \Psi(x, \exp(it\x)) + \Psi( x, \exp(it\x')).
  \end{gather*}
  The result follows immediately.
\end{proof}
The previous lemma corresponds to \cite[Lemma 3.8]{mundet-Trans}. Note
that there one should assume $iv'\in \lieg_x$, which is the only case
needed later in that paper.

\begin{cor}
  \label{stabilizzatore}
  If $\la_x \geq 0$, then $\Psi(x, \exp(i\x) )=0$ for any
  $\x\in \liek$ such that $i\x \in \lieg_x$.
\end{cor}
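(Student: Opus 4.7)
The plan is to combine the linearity of $t \mapsto \Psi(x, \exp(it\x))$ given by Lemma \ref{lineare-1} with the fact that the maximal weight in a direction $iv \in \lieg_x$ equals the slope of this linear function, and then exploit the cocycle condition to see that opposite directions give opposite slopes.

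The case $\x = 0$ is covered by \ref{zero}, so I assume $\x \neq 0$ and set $v := \x / |\x| \in S(\liek)$, so $iv \in \lieg_x$. By Lemma \ref{lineare-1} the functions
\begin{gather*}
\varphi_+(t) := \Psi(x, \exp(itv)), \qquad \varphi_-(t) := \Psi(x, \exp(-itv))
\end{gather*}
are linear in $t$; write $\varphi_+(t) = at$ and $\varphi_-(t) = bt$. Since $\exp(itv)$ and $\exp(-itv)$ both lie in $G_x$, equation \eqref{somma} applied with $g = \exp(itv)$, $h = \exp(-itv)$ together with $\Psi(x, e)=0$ (see \ref{zero}) gives $\varphi_+(t) + \varphi_-(t) = 0$, hence $b = -a$.

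Next I would use formula \eqref{la-exp} to identify these slopes with values of $\la_x$. Indeed, since $\varphi_-$ is linear, its derivative is constantly $b$, so
\begin{gather*}
\la_x(\e(v)) = \lim_{t \to +\infty} \desudt \Psi(x, \exp(-itv)) = b,
\end{gather*}
and similarly $\la_x(\e(-v)) = a$. Combining with $b = -a$ yields $\la_x(\e(v)) = -\la_x(\e(-v))$. The hypothesis $\la_x \geq 0$ on $\tits$ then forces both numbers to vanish, so $a = b = 0$ and $\varphi_+ \equiv 0$. Evaluating at $t = |\x|$ gives $\Psi(x, \exp(i\x)) = \varphi_+(|\x|) = 0$, as desired.

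The only subtle point, really, is bookkeeping: one must be careful that Lemma \ref{lineare-1} is being applied to both $v$ and $-v$ (which is fine since $i(-v) = -iv \in \lieg_x$), and that the sign conventions in \eqref{la-exp} match. Beyond that the proof is a one-line consequence of linearity plus the cocycle condition, and I do not anticipate any real obstacle.
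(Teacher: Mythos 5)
Your proof is correct and follows essentially the same route as the paper's: both reduce to $v\in S(\liek)$, invoke Lemma \ref{lineare-1} for linearity, read off the two boundary slopes $\la_x(\e(\pm v))$ via \eqref{la-exp}, and conclude from $\la_x\geq 0$ that the slope vanishes. The only cosmetic difference is that you derive the relation $\la_x(\e(v))=-\la_x(\e(-v))$ from \eqref{somma}, whereas the paper gets it directly from the linearity of $t\mapsto\Psi(x,\exp(itv))$ on all of $\R$; these amount to the same computation.
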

\begin{proof}
  The result is obvious if $v=0$, see \ref{zero}.  If $v\neq 0$, we
  can assume by rescaling that $v\in S(\liek)$. Set
  $f(t) : = \Psi(x, \exp(it\x))$.  By Lemma \ref{lineare-1}
  $f(t) = at$ for some $a\in \R$.  Using \eqref{la-exp} we get
  $\la(\e(-v)) = a$ and $ \la_x(\e(\x))=-a$.  Since $\la_x \geq 0$,
  $a=0$ and $f\equiv 0$.
\end{proof}

\begin{lemma}
  \label{centralizzante}
  Let $x$ be a point in $ \spaz$.  If $\sx \fun (x) , \x\xs =0 $ for
  any $\x\in \liez_{\liek}(\liek_x)$, then $\fun (x) =0$.
\end{lemma}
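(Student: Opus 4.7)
The plan is to exploit the $K$-equivariance of $\fun$ (Proposition \ref{equivarianza}) together with the $\Ad$-invariant inner product on $\liek$ that was fixed in the construction of the Riemannian metric on $X=G/K$. The strategy is to show that the hypothesis, applied to a single cleverly chosen element of $\liez_\liek(\liek_x)$ (namely $\fun(x)$ itself, viewed in $\liek$ via the inner product), already forces $\fun(x)=0$.

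First I would observe that for every $k \in K_x$ we have $kx=x$, so Proposition \ref{equivarianza} gives $\Ad^*(k)\fun(x) = \fun(kx) = \fun(x)$. Thus $\fun(x) \in \liek^*$ is fixed by the coadjoint action of $K_x$. Next, let $\fo \in \liek$ be the vector corresponding to $\fun(x)$ under the isomorphism $\liek \cong \liek^*$ induced by the chosen $\Ad$-invariant inner product (which I will denote $(\cdot,\cdot)$ to avoid confusion with the duality pairing $\scalo$). By $\Ad$-invariance of the inner product, the isomorphism is $K$-equivariant, so the previous step translates to $\Ad(k)\fo = \fo$ for every $k \in K_x$. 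Differentiating this at $t=0$ along $\exp(tw)$ for $w \in \liek_x$ yields $[w,\fo]=0$, and since this holds for all $w \in \liek_x$, we conclude $\fo \in \liez_\liek(\liek_x)$.

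Now I can apply the hypothesis with $\x = \fo$: it gives $\sx \fun(x), \fo\xs = 0$. Under the identification, however, $\sx \fun(x),\fo\xs = (\fo,\fo)$, so $\fo=0$ and therefore $\fun(x)=0$, as required.

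There is essentially no serious obstacle here: once the $K$-equivariance of $\fun$ and the existence of an $\Ad$-invariant inner product on $\liek$ are in hand (both already available in the paper's setup), the argument is a two-line manipulation. The only delicate point worth stating carefully is that the inner product must be $\Ad$-invariant, so that the isomorphism $\liek \to \liek^*$ intertwines the adjoint and coadjoint actions and hence sends the $K_x$-fixed subspace of $\liek^*$ onto $\liez_\liek(\liek_x)$; this is exactly why the centralizer appears in the statement of the lemma.
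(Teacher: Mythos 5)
Your argument is correct and is essentially the paper's own proof: the paper likewise fixes an $\Ad$-invariant scalar product, identifies $\fun(x)$ with a vector in $\liek$, and reduces the lemma to showing that this vector lies in $\liez_{\liek}(\liek_x)$, citing Teleman's Remark 2.13 for that last step --- which is precisely the $K$-equivariance computation you spell out. Nothing is missing.
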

\begin{proof}
  Fix an $\Ad$--invariant scalar product on $\liek$ and identify
  $\fun(x)$ with a vector in $\liek$. It is to enough to show that
  $\fun(x)$ belongs to $\liez_{\liek}( \liek_x)$.  See
  \cite[Rem. 2.13]{teleman-symplectic-stability} for the rest of the proof.
\end{proof}

\begin{say}
  \label{alfa-infinito}
  Given a geodesic $\alfa$ in the symmetric space $X$, denote by
  $\alfa(+\infty)$ the equivalence class of $\alfa$ and by
  $\alfa(-\infty)$ the equivalence class of the geodesic
  $t \mapsto \alfa(-t)$.
\end{say}

\begin{defin}
  We say that $p$ and $p' \in \tits$ are \enf {connected} if there
  exists a geodesic $\alfa$ in $X$ such that $p=\alfa(\infty)$ and
  $p'=\alfa(-\infty)$.
\end{defin}
For $x\in \spaz$ set
\begin{equation}
  \label{eq:2}
  Z(x):=\{p\in \tits : \la_x(p) =0\}.
\end{equation}

  \begin{lemma}\label{polistabile-tits}
    If $\fun(x) =0$, then $\lieg_x = \liek_x \oplus i \lieq $ with
    $\lieq\subset \liek_x$ and $Z(x) = \e (S(\lieq))$.
  \end{lemma}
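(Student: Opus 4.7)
Since $\fun(x)=0$, Proposition \ref{compatible} gives that $G_x$ is compatible, so $\lieg_x = \liek_x \oplus (\lieg_x \cap i\liek)$; setting $\lieq := \{v \in \liek : iv \in \lieg_x\}$ turns this into $\lieg_x = \liek_x \oplus i\lieq$, with $\lieq$ a real subspace of $\liek$. To see $\lieq \subset \liek_x$, fix $v \in \lieq$. By Lemma \ref{lineare-1} the function $f(t) := \Psi(x, \exp(itv))$ is linear in $t$, and since $f(0)=0$ (by \ref{zero}) and $f'(0) = \sx \fun(x), v \xs = 0$, it vanishes identically. In particular $\frac{d^2}{dt^2}\big|_{t=0}\Psi(x, \exp(itv)) = 0$, so by the equality clause in \ref{P3} we get $\exp(\C v) \subset G_x$, whence $v \in \liek_x$.

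For the identity $Z(x) = \e(S(\lieq))$, the inclusion $\e(S(\lieq)) \subset Z(x)$ is immediate: if $v \in S(\lieq)$, then $-v \in \lieq$ as well, so $\Psi(x,\exp(-itv)) \equiv 0$ by the argument just given, and \eqref{la-exp} yields $\la_x(\e(v)) = 0$. For the reverse inclusion, take $p = \e(v) \in Z(x)$ with $v \in S(\liek)$ and set $w := -v$. Then by \eqref{la-exp},
\[
\lim_{t \to +\infty} \frac{d}{dt}\Psi(x, \exp(itw)) = \la_x(\e(v)) = 0.
\]
By \ref{P3} the function $t \mapsto \Psi(x, \exp(itw))$ is convex, so its derivative is non-decreasing; at $t=0$ it equals $\sx \fun(x), w \xs = 0$. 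Being sandwiched between two equal values $0$ and $0$, the derivative must vanish identically on $[0, +\infty)$. Hence $\frac{d^2}{dt^2}\big|_{t=0}\Psi(x, \exp(itw)) = 0$, and \ref{P3} again gives $\exp(\C w) \subset G_x$, so $w \in \lieq$ and therefore $v = -w \in \lieq$, proving $p \in \e(S(\lieq))$.

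The main technical point is the convexity-based sandwich argument in the second step: it is what propagates the vanishing of $\la_x$ (a statement about the slope at infinity of a convex function) back to vanishing of the second derivative at $0$, which is the quantity needed to invoke the equality clause of \ref{P3}. Everything else amounts to unpacking the compatibility structure of $G_x$ afforded by Proposition \ref{compatible} and the linearity from Lemma \ref{lineare-1}.
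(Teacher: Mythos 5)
Your proof is correct and follows essentially the same route as the paper's: Proposition \ref{compatible} for the splitting, Lemma \ref{lineare-1} together with the equality clause of \ref{P3} to get $\lieq\subset\liek_x$ and the inclusion $\e(S(\lieq))\subset Z(x)$, and the convexity sandwich (derivative vanishing at $0$ and at $+\infty$ forces the function to be constant on $[0,+\infty)$) for the reverse inclusion. The only cosmetic difference is that you work with $w=-v$ and then use that $\lieq$ is a subspace, while the paper phrases the same computation directly in terms of $\psi_x(\exp(itv)K)=\Psi(x,\exp(-itv))$.
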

  \begin{proof}
    By Proposition \ref{compatible} $G_x$ is a compatible subgroup of
    $G$, so $\lieg_x = \liek_x \oplus \liep$ with
    $\liep \subset i\liek$. Set $\lieq : = i\liep$. From Lemma
    \ref{lineare-1} and \ref{P3} it follows that for $v\in \liek$ the
    condition $iv \in \lieg_x$ is equivalent to
    $ \C v \subset \lieg_x$.  So $\lieq \subset \liek_x$.  To prove
    the last assertion fix $ v \in S(\liek)$. If $\e(v ) \in Z(x)$,
    then $f(t) := \psi_x(\exp(itv)K)$ is convex and satisfies
    $f'(+\infty) = \la_x(\e(v)) = 0$ and $f'(0) = \fun^v(x) = 0$.
    Hence $f$ is constant for $t>0$, so $iv \in \lieg_x$ by \ref{P3}
    and $v \in \lieq$. This proves that
    $\e \meno (Z(x)) \subset S(\lieq)$.  Conversely, if
    $v \in S(\lieq)$, then $iv \in \lieg_x$, so $f$ is linear by Lemma
    \ref{lineare-1}. Moreover $f'(0) = \fun^v(x) = 0$. So $f \equiv 0$
    and $\e(v) \in Z(x)$. 
  \end{proof}

  \begin{say}
    \label{def-tu}
    For $u\in \liek$ denote by $T_u$ the closure of $\exp (\R u)$ in
    $K$ and denote by $K^u$ the centralizer of $u$ in $K$, i.e.
    \begin{gather*}
     K^u:=\{ a\in K: \Ad a (u) = u\}.  
    \end{gather*}
    Similarly $G^u$ is the centralizer in $G$. Note that
    $G^u=(K^u)^\C$.
  \end{say}

  \begin{lemma} \label{mundet1} If $g\in G$ and $u\in S(\liek)$, we
    have $\dim T_u = \dim T_{g\cd u}$.
  \end{lemma}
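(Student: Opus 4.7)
The plan is to establish the stronger claim that $g \cd u$ lies in the $\Ad(K)$-orbit of $u$ in $S(\liek)$. Granting this, if $g \cd u = \Ad(k) u$ for some $k \in K$, then $\exp(\R\, g\cd u) = k \exp(\R u) k\meno$, so taking closures in $K$ yields $T_{g \cd u} = k T_u k\meno$, which gives the desired equality $\dim T_{g \cd u} = \dim T_u$.

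I would first use the Cartan (polar) decomposition $G = K\cd \exp(i\liek)$, recalled in \ref{def-compatibile}, to write $g = h \exp(i \x)$ with $h \in K$ and $\x \in \liek$. For $h \in K$ the Tits action agrees with the adjoint action, because
\begin{gather*}
  h \circ \ga^u(t) = h \exp(itu) K = \exp(it\, \Ad(h) u)\, hK = \exp(it\, \Ad(h) u) K = \ga^{\Ad(h) u}(t)
\end{gather*}
exhibits $h\circ \ga^u$ as the unit-speed geodesic ray from $o$ in direction $\Ad(h) u$, so that $h \cd u = \Ad(h) u$. Hence the claim reduces to the case $g = \exp(i\x)$ with $\x \in \liek$, i.e.\ to showing $\exp(i\x) \cd u \in \Ad(K)\cd u$.

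The latter step is equivalent to the statement that the $G$-orbit of $\xi := \e(u) \in \tits$ coincides with its $K$-orbit. To prove this I would invoke two classical inputs from the structure theory of symmetric spaces of noncompact type: first, that for every $\xi \in \tits$ the stabilizer $G_\xi = \{g \in G : g \cd \xi = \xi\}$ is a parabolic subgroup of $G$, namely the horospherical (Busemann) parabolic attached to any geodesic representative of $\xi$; second, that for every parabolic $P \subset G$ the Iwasawa decomposition yields $G = K \cd P$, so that $K$ acts transitively on the flag variety $G/P$. Applied to $P = G_\xi$, these produce some $k' \in K$ with $g \cd \xi = k' \cd \xi$, which via the homeomorphism $\e$ reads $g \cd u = \Ad(k') u$, as required.

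The main obstacle is this structure-theoretic input: it is classical, but sits outside the abstract Kempf-Ness framework set up in the earlier sections, so the care lies in citing it efficiently rather than reproving it from scratch. Once accepted, the rest is a formal translation between the Tits boundary picture and the $\Ad(K)$-action on $S(\liek)$.
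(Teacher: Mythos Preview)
Your approach is correct. The paper itself does not supply a proof of this lemma; it merely cites \cite[Lemma 2.1]{mundet-Trans}. Your argument---reducing via the Cartan decomposition to $K$ (where the Tits action is visibly $\Ad$) and to $\exp(i\liek)$, and then invoking the structure-theoretic fact that stabilizers on $\tits$ are parabolic together with $G = KP$---is the standard route, and is essentially what Mundet's proof does. The stronger conclusion you establish, that $g\cd u \in \Ad(K)\cd u$, is exactly the content of the cited lemma. One small remark: as stated in \ref{ss1} the paper takes $X$ to be of noncompact type, but since $G=K^\C$ with $K$ merely compact connected, $X$ may carry a Euclidean factor coming from the center of $K$; your argument still goes through there because transvections along a Euclidean factor act trivially on its sphere at infinity, so the reduction to the parabolic picture in the semisimple part suffices.
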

  The action of $G$ on $S(\liek)$ is defined in \eqref{azione-tits}.
  The proof can be found in \cite[Lemma 2.1]{mundet-Trans}.

\begin{lemma}\label{lemmozzo}
  Let $x \in \spaz$ and assume that $\la_x \geq 0$.  Let
  $u \in \e\meno(Z(x))$ be such that
  \begin{gather}
    \label{massimo}
    \dim T_u = \max_{w \in \e\meno(Z(x))} \dim T_w.
  \end{gather}
  Let $K'\subset K^u$ be a compact connected subgroup such that the
  morphism
  \begin{gather}
    \label{rivestimento}
    T_u \times K' \ra K^u, \quad (a,b) \mapsto ab,
  \end{gather}
  is surjective and with finite kernel.  Set $G':= (K')^\C$.  If
  $\fun^u (x) =0$, then $x$ is $G'$-stable.
\end{lemma}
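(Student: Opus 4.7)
My plan is to apply Theorem \ref{stabile} to the Kempf-Ness data $(\spaz, G', K', \Psi^{K'})$ described in \ref{sotto-struttura-2}. Since $\la_x^{K'}$ is the restriction of $\la_x$ to $\tits' \subset \tits$, it suffices to show $\la_x(\e(w)) > 0$ for every $w \in S(\liek')$. Arguing by contradiction, I assume there exists $w_0 \in S(\liek')$ with $\la_x(\e(w_0)) = 0$, so $w_0 \in \e\meno(Z(x))$. Note that $u$ and $w_0$ commute, since $K' \subset K^u$.

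The first substantial step, and the place where the hypothesis $\fun^u(x) = 0$ enters essentially, is to prove $\exp(\C u) \subset G_x$. Consider $h(t) := \Psi(x, \exp(itu))$: by \ref{P1} and \ref{P3} it is smooth and convex, so $h'$ is non-decreasing. Equation \eqref{momento-astratto} gives $h'(0) = \fun^u(x) = 0$, while the condition $u \in \e\meno(Z(x))$, translated via \eqref{la-exp}, reads $\lim_{t \to -\infty} h'(t) = 0$. Monotonicity forces $h' \equiv 0$ on $(-\infty, 0]$, hence $h''(0) = 0$, and by the equality case of \ref{P3} one concludes $\exp(\C u) \subset G_x$, so in particular $iu \in \lieg_x$.

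With this in hand, I apply Lemma \ref{ziocantante} with $\xi = -sw_0$ and $\xi' = -u$ for any fixed $s > 0$: these commute and $i\xi' = -iu \in \lieg_x$, so the asymptotic slopes are additive. Using \eqref{la-exp}, positive homogeneity of the limit, and the vanishing $\la_x(\e(u)) = 0 = \la_x(\e(w_0))$, one deduces $\la_x(\e((u+sw_0)/|u+sw_0|)) = 0$; that is, $(u + sw_0)/|u + sw_0| \in \e\meno(Z(x))$ for every $s > 0$. On the other hand, the finite kernel of $T_u \times K' \to K^u$ forces $\mathrm{Lie}(T_u) \cap \liek' = \{0\}$, so $w_0 \notin \mathrm{Lie}(T_u)$, hence $\mathrm{Lie}(T_{w_0}) \not\subset \mathrm{Lie}(T_u)$. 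By the standard density theorem for one-parameter subgroups in a torus, for $s$ outside a countable subset of $\R_{>0}$ the torus $T_{u+sw_0}$ equals the subtorus with Lie algebra $\mathrm{Lie}(T_u) + \mathrm{Lie}(T_{w_0})$, which has dimension strictly greater than $\dim T_u$. For such $s$ this contradicts the maximality condition \eqref{massimo}, and the proof is complete. The main obstacle is the derivation in the second paragraph: one must pin down the flatness of the convex smooth function $h$ on the entire half-line $(-\infty, 0]$ by simultaneously exploiting its vanishing derivative at $t = 0$ (from $\fun^u(x) = 0$) and at $t = -\infty$ (from $u \in \e\meno(Z(x))$), after which Lemma \ref{ziocantante} does the heavy lifting and the rest follows by elementary torus theory.
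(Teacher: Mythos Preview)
Your proof is correct and follows essentially the same route as the paper's own argument: establish $iu\in\lieg_x$ from the convexity of $t\mapsto\Psi(x,\exp(itu))$ together with $\fun^u(x)=0$ and $\la_x(\e(u))=0$, then use Lemma~\ref{ziocantante} to show $(u+sw_0)/|u+sw_0|\in\e\meno(Z(x))$ for $s>0$, and finally derive a contradiction to~\eqref{massimo} via elementary torus theory. The only cosmetic difference is in step four: the paper writes out lattice coordinates in $T_u\times T_{w_0}$ and picks one explicit $a$ with $\dim T_{u+aw_0}>\dim T_u$, whereas you invoke the density theorem to conclude $T_{u+sw_0}=T_u\cdot T_{w_0}$ for all but countably many $s$; both arguments encode the same Kronecker-type fact.
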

\begin{proof}
  First of all we claim that $iu \in \lieg_x$. In fact
  $\la_x(\e(u)) = 0$ and $\fun^u(x) = 0$ by hypothesis. So the convex
  function $f(t) = \psi_x(\exp(itu)K)$ satisfies
  $f'(0)=\lim_{t\to +\infty} f'(t)=0$. It follows that $f$ is constant
  on $[0,+\infty)$. Using \ref{P3} we conclude that
  $\exp(\C u) \subset G_x$.  This proves the claim.  Next set
  $X' := G' / K' $. Then $X' \hookrightarrow X=G/K$ since
  $G' \cap K =K'$.  As noted in \ref{sotto-struttura-2},
  $\Psi \restr{\spaz\times G'}$ is a Kempf-Ness function for
  $(\spaz, G', K')$.  We claim that $Z(x) \cap \tits' = \vacuo$ and we
  argue by contradiction.  Assume that there is $u' \in S(\liek')$,
  such that $\e(u') \in Z(x) \cap \tits'$.  Let $a>0$. Since
  $[u', u]=0$ and $iu\in \lieg_x$, Lemma \ref{ziocantante} yields
  \begin{gather*}
    \lim_{t\to +\infty} \desudt \Psi (x, \exp(- it (au' + u) ) = \\
    =a \lim _{t\to +\infty} \desudt \Psi (x, \exp(- it u'))
    + \lim _{t\to +\infty} \desudt \Psi (x, \exp(- it u))= \\
    = a \la_x (\e(u')) + \la_x(\e(u)).
  \end{gather*}
  By assumption $\la_x(\e(u)) =\la_x(\e(u'))=0$.  It follows that for
  any $a\in \R$ the vector $ ({u + au'})/{|u+au'|} $ belongs to
  $ \e \meno (Z(x)) $ We claim that for some $a$
  $ \dim T_{u+au'} >\dim T_u$.  Let
  $T'= \overline{ \exp (\R u+ \R u' )} $ and
  $T_{u'}=\overline{\exp(\R u')}$. Since $T_{u'}\subset K'$ and
  $K'\cap T_u$ is finite, the morphism
  \begin{gather*}
    f: T_u \times T_{u'} \longrightarrow T', \qquad f (a,b)= ab,
  \end{gather*}
  is a finite covering.  Let $\{e_1,\ldots,e_n\}$ (respectively
  $\{e_1',\ldots,e_m'\}$) be a basis of the lattice
  $\ker \exp \subset \mathfrak t_u$ (respectively
  $\ker \exp \subset \mathfrak t_{u'}$).  If
  $u=X_1 e_1+\cdots +X_n e_n$ and $u'=Y_1 e_1' +\cdots + Y_m e_m'$,
  then
  $u+au'=X_1 e_1+\cdots +X_n e_n + aY_1 e_1' +\cdots + aY_m e_m'$.
  Denote by $T'_{u+au'}$ the closure of $\exp(\R (u+au'))$ in $T'$.
  Since $f$ is a covering $\dim T_{u+au'} = \dim T' _{u+au'}$.
  Therefore
  \begin{gather*}
    \dim T_{u +a u'} = \dim_{\mathbb Q} \left(\mathbb Q X_1 + \cdots
      +\mathbb Q X_n +\mathbb Q \, aY_1 +\cdots +\mathbb Q \, a
      Y_m\right).
  \end{gather*}
  (See e.g. \cite[p. 61]{duistermaat-kolk-Lie}.)  Since $u'\neq 0$,
  $Y_j \neq 0$ for some $j$. Choose $a$ such that
  $aY_j \not \in \mathbb Q X_1 + \ldots +\mathbb Q X_n$ and set
  $w:=(u + au')/|u +au'|$.  Then $\e(w) \in Z(x)$ and
  $\dim T_{w}
= \dim T_{u + a u'}
 > \dim T_u$. This contradicts \eqref{massimo}.  We have
  proved that $Z(x) \cap \tits'=\vacuo$. Since $\la_x \geq 0$ on
  $\tits$, we conclude that $\la_x >0 $ on $\tits'$. By Theorem
  \ref{stabile} $x$ is $G'$-stable.
\end{proof}

\begin{teo}[Mundet i Riera]
  \label{ps}
  A point $x\in \spaz$ is polystable if and only if $\la_x \geq 0$ and
  for any $p\in Z(x)$ there exists $p' \in Z(x)$ such that $p$ and
  $p'$ are connected.
\end{teo}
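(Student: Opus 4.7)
\emph{Necessity $(\Rightarrow)$.} Suppose $x$ is polystable and pick $g\in G$ with $y:=gx$ satisfying $\fun(y)=0$. By Lemma \ref{critical-point} the point $o=K$ is a critical point of the convex function $\psi_y$, hence its global minimum, so $\psi_y$ is nondecreasing along every ray from $o$; taking limits of derivatives yields $\la_y \geq 0$ on $\tits$, and $\la_x\geq 0$ follows from Lemma \ref{lemma-la-equiv}. Moreover Lemma \ref{polistabile-tits} gives $Z(y)=\e(S(\lieq))$ for some $\lieq \subset \liek_y$, so every $\e(v)\in Z(y)$ is connected to $\e(-v)\in Z(y)$ by the geodesic $t\mapsto \exp(itv)K$ through $o$. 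Transporting these geodesics by $g\meno$ provides the connectedness condition for $Z(x)$.

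\emph{Sufficiency $(\Leftarrow)$.} Assume $\la_x\geq 0$ and the connectedness condition. If $Z(x)=\vacuo$ then $\la_x>0$ and Theorem \ref{stabile} gives stability (hence polystability). Otherwise pick $u\in \e\meno(Z(x))$ maximising $\dim T_u$, and use the connectedness hypothesis to obtain a geodesic $\alpha$ in $X$ with $\alpha(+\infty)=\e(u)$ and $\alpha(-\infty)\in Z(x)$. Since $G$ acts transitively on the geodesics of the symmetric space $X$, there is $h\in G$ sending $\alpha$ to a geodesic through $o$; replacing $x$ by $hx$ (which preserves the hypotheses, polystability, and the maximality of $\dim T_u$ via Lemma \ref{mundet1}) we may assume $\alpha(t)=\exp(itu)K$, so $\e(-u)\in Z(x)$ as well. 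Then $\la_x(\e(\pm u))=0$ together with $\la_x\geq 0$ and convexity of $f(t):=\Psi(x,\exp(itu))$ force both limits $\lim_{t\to\pm\infty}f'(t)$ to vanish, so $f$ is constant. By \ref{P3} this gives $\exp(\C u)\subset G_x$ and in particular $\fun^u(x)=0$.

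Now pick a compact connected $K'\subset K^u$ such that $T_u\times K'\to K^u$ is surjective with finite kernel. Since $K^u$ centralises $u$ and hence the closure $T_u$ of $\exp(\R u)$, the torus $T_u$ is central in $K^u$ and commutes with $K'$; therefore $\liek^u=\liet_u\oplus \liek'$ with $[\liet_u,\liek']=0$. Lemma \ref{lemmozzo} then shows that $x$ is $G'$-stable with $G':=(K')^\C$, and Theorem \ref{stabile} applied to $(G',K',\spaz)$ yields $g\in G'$ with $\fun^{K'}(gx)=0$. Set $y:=gx$. Because $g$ centralises $u$ it commutes with $\exp(\C u)\subset G_x$, so $\exp(\C u)\subset G_y$; by closure of $G_y$ one then has $T_u\subset G_y$ and $\liet_u\subset \liek_y$.

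The main obstacle is to upgrade $\fun(y)|_{\liek'}=0$ to $\fun(y)=0$. The plan is to invoke Lemma \ref{centralizzante}: since $\liek_y\supset \liet_u$ one has $\liez_\liek(\liek_y)\subset \liez_\liek(\liet_u)\subset \liek^u=\liet_u\oplus \liek'$, and writing $v=t+w$ with $t\in \liet_u$ and $w\in \liek'$ the term $\fun^w(y)$ already vanishes. The $\liet_u$-part requires an argument in the same spirit as the proof of Lemma \ref{lemmozzo}: a putative $t\in \liet_u$ with $\fun^t(y)\neq 0$, combined with Lemma \ref{ziocantante} (applicable because $iu\in \lieg_y$ and $[u,t]=0$), should allow one to produce a vector $w=au+bt\in \liek$ with $\e(w)\in Z(y)$ and $\dim T_w>\dim T_u$, contradicting the maximality of $\dim T_u$ which is inherited from $x$ to $y$ thanks to $g\in G^u$ fixing $\e(\pm u)$ in the Tits action. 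This passage from partial to full vanishing of the momentum mapping is the technical heart of the proof.
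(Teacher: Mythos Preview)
Your argument tracks the paper's closely through the application of Lemma \ref{lemmozzo}: the necessity direction, the reduction via the connectedness hypothesis to a geodesic through $o$, the conclusion that $\exp(\C u)\subset G_x$ with $\fun^u(x)=0$, and the production of $g\in G'$ with $\fun^{K'}(y)=0$ where $y=gx$, are all essentially the same as in the paper. (One quibble: $G$ does not act transitively on geodesics of $X$ when the rank exceeds one, but what you actually use --- that any geodesic has the form $t\mapsto g\exp(itu)K$ --- is of course true.)

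The genuine gap is your final paragraph. You correctly isolate the remaining task, showing $\fun^t(y)=0$ for $t\in\liet_u$, but the contradiction you sketch does not close. Applying Lemma \ref{ziocantante} with $v'=u$ (so $iu\in\lieg_y$) and $v=t$ yields
\[
\lim_{s\to+\infty}\frac{d}{ds}\Psi\bigl(y,\exp(is(au+bt))\bigr)
= b\,\lim_{s\to+\infty}\frac{d}{ds}\Psi\bigl(y,\exp(ist)\bigr),
\]
since the $u$-term contributes $a\cdot\fun^u(y)=0$. Thus the value of $\la_y$ at a combination $au+bt$ is governed entirely by $\la_y$ at $t$; it carries no information about the derivative $\fun^t(y)$ at the origin. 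A nonzero $\fun^t(y)$ therefore does not force any $w=au+bt$ into $Z(y)$, and even if $\e(-t/|t|)\in Z(y)$ you would only get $T_t\subset T_u$, never a strictly larger torus. The maximality of $\dim T_u$ is simply not engaged by this line.

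The paper's route is direct rather than by contradiction. Having shown $\overline{\exp(\C u)}\subset G_y$ (hence $\liet_u\subset\liek_y$), it argues that for each $v\in\liet_u$ the function $s\mapsto\Psi(y,\exp(isv))$ is linear; since $\la_y\geq 0$ gives both $\la_y(\e(v))\geq 0$ and $\la_y(\e(-v))\geq 0$, a linear function with these endpoint slopes must be identically zero, whence $\fun^v(y)=0$. Combined with $\fun(y)\restr{\liek'}=0$ and $\liez_\liek(\liek_y)\subset\liek^u=\liet_u+\liek'$, Lemma \ref{centralizzante} then gives $\fun(y)=0$. You should replace the contradiction sketch with this direct argument.
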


\begin{proof}
  Let $x \in \spaz$ be a point satisfying the condition in the
  theorem.  If $Z(x) =\vacuo$, then $\la_x >0$, so by Theorem
  \ref{stabile} $x$ is stable and a fortiori polystable.  If
  $Z(x)\neq \vacuo$, choose $p\in Z(x)$ such that the dimension of the
  torus $T_\x$, where $\x:= \e\meno (p)$ is the largest possible.  In
  other words
  \begin{gather*}
    \dim T_\x = \max_{w \in \e\meno(Z(x))} \dim T_w.
  \end{gather*}
  By assumption there is a geodesic $\alfa$ in $X$ such that
  $p=\alfa(+\infty)$ and $p' = \alfa(-\infty) \in Z(x)$, using the
  notation of \ref{alfa-infinito}.  Assume that
  $\alfa (t) =g \exp(itu) K$. Then $p = g\cd \e(u) $ and
  $p' = g \cd \e(-u)$. By \eqref{la-equiv}
  \begin{gather*}
    \la_{g\meno\cd x}(\e(u)) = \la_x (g \cd \e(u)) = \la_x (p) = 0,\\
    \la_{g\meno\cd x}(\e(-u)) = \la_x (g \cd \e(-u)) = \la_x (p') = 0.
  \end{gather*}
  Set $y:= g\meno \cd x$. We have just proved that the convex function
  $t\mapsto \psi_y (\exp(itu)K)$ has zero derivative at both $+\infty$
  and $-\infty$. So it is constant and by \ref{P3}
  $\exp (\C u) \subset G_y$.  Moreover $\fun^u (y) =0$ by
  \eqref{momento-astratto}.  Since $\e(\x) = p =g\cd \e(u)$, Lemma
  \ref{mundet1} implies that $\dim T_u = \dim T_\x $.  From
  \eqref{la-equiv} we deduce that $Z(x) = g ( Z(y))$ and that
  $\la_y \geq 0$.  . Using again Lemma \ref{mundet1} we get
  \begin{gather*}
    \dim T_u = \max_{w \in \e\meno(Z(y))} \dim T_w.
  \end{gather*}
  Fix a compact connected subgroup $K' \subset K^u $ such that the map
  \eqref{rivestimento} is a finite covering.  Set $G':= (K') ^\C$.  By
  Lemma \ref{lemmozzo} $y \in \spaz$ is $G'$-stable.  Denote by
  $\fun': \spaz \ra \liek'^*$ the momentum mapping of
  $(\spaz, G', K' , \Psi\restr{\spaz\times G'})$. Thus there is
  $h\in G' \subset G$ such that $\fun'(h\cd y)=0$.  Set $z:=h\cd y$.
  The fact that $\fun'(z)=0$, means that $\sx \fun(z) , v\xs =0$, for
  any $v \in \liek'$, see \ref {sotto-struttura-2}.  We claim that
  $\sx \fun(z), v\xs =0$ also for $v\in \liet_u$.  First we prove that
  $\liet_u \subset \liek_z $.  In fact let $\spaz' $ denote the set of
  points of $\spaz$ that are fixed by $\overline{\exp(\C u)}$.  Note
  that $y\in \spaz'$.  Since $G^u$ preserves $\spaz'$, also
  $z \in \spaz'$, i.e. $ \overline{\exp(\C u)} \subset G_z$, as
  desired.  Now if $v\in \liet_u$, then $v \in \liek_{z}$, so by Lemma
  \ref{convstab} the function $f(t)= \Psi(z,\exp(it v))$ is
  linear. Since $ f'(-\infty) =\la_{z} ( \e(v)) \geq 0$ and
  $f'(+\infty)=\la_{z} (\e (-v)) \geq 0$, we conclude that
  $ f'(0)= \sx \fun(z) , v\xs =0$ for any $v\in \liet_u$, as
  claimed. Since $\liek'\oplus \liet_u = \liek^u$, we have proved that
  $\sx \fun(z) , v\xs =0$ for any $v\in \liek^u$.  But
  $\liet_u \subset \liek_z$, so
  $\liez_\liek (\liek_z) \subset \liek^u$.  Therefore Lemma
  \ref{centralizzante} implies that $\fun(z)=0$.  This finally proves
  that $x$ is polystable.  We have shown that the condition in the
  theorem implies polystability.  To prove the opposite implication, assume
  that $x$ is polystable. Then there exists $g\in G$ such that
  $\mathfrak F (g\cdot x)=0$. Set $y=g\cdot x$ and fix $\x \in
  \liek$. The function $ f(t)=\Psi(y,\exp(-it \x ))$ is convex and
  \begin{gather*}
    \lambda_y ( \e( \x) )= \lim_{t\to \infty} f'(t)\geq f'(0) = \sx
    \fun(y), \x \xs = 0.
  \end{gather*}
  This proves that $\la_y \geq 0$ on $\tits$. By Lemma \ref
  {lemma-la-equiv} also $\la_x \geq 0$. Next we check the condition on
  $Z(y)$.  By Proposition \ref{compatible} and Lemma
  \ref{polistabile-tits}, $G_y$ is compatible with
  $\mathfrak g_y=\mathfrak k_y \oplus \mathfrak q$ and
  $Z(y)=e(S(\mathfrak q))$.  If $\e(v) \in Z(y)$, then also
  $\e(-v) \in Z(y)$.  Since $\e (v)$ and $\e(-v)$ are obviously
  connected, the condition in the theorem holds for $Z(y)$.  Moreover
  $Z(x) = g\meno (Z(y))$, and so $Z(x)$ also satisfies the condition
  in the theorem. Indeed, if $p \in Z(x)$, then $g \cd p \in Z(y)$, so
  there is $q \in Z(y)$ connected to $g\cd p$, by a geodesic $\alfa$,
  i.e.  $\alfa(+\infty) = g\cd p$ and $\alfa(-\infty ) = q$.  Then the
  geodesic $g\meno\circ \alfa $ connects $p$ to
  $g\meno \cd q \in Z(x)$.
\end{proof}

\begin{ex}
  If $X$ is the unit disc, any two distinct points of $\tits$ are
  connected. Thus in this case the condition in the theorem means that
  either $Z(x)$ is empty, or it contains at least two points.  If
  $X=\R^n$, and $v\in S(\liek) = S^{n-1}$, then $\e(v)$ is connected
  only to $\e(-v)$. In this case the condition in the theorem amounts
  to saying that $\e\meno(Z(x) ) \subset S^{n-1}$ is invariant by the
  antipodal map.
\end{ex}

The following important lemma is due to Kapovich, Leeb and Millson
\cite [Lemma 3.4]{kapovich-leeb-millson-convex-JDG}.

\begin{lemma}
  \label{kapo}
  Let $X$ be a symmetric space and let let $u: X \ra \R$ be a smooth
  convex function. If $u_\infty \geq 0$ on $\tits$, then there is a
  sequence $\{p_n\}$ in $ X$ such that
  $|| \nabla u_\infty (p_n) || \to 0$.
\end{lemma}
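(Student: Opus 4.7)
(The conclusion should be read with $\nabla u$, not $\nabla u_\infty$: the $p_n$ live in $X$, while $u_\infty$ is only defined on $\tits$.) My approach is to study the gradient flow $\phi_t$ of $-\nabla u$ starting from a basepoint $o\in X$ and to show that either $\|\nabla u(\phi_n(o))\|\to 0$ (giving the desired sequence), or the quantitative rate of decrease of $u$ along $\phi_t$ produces, via convexity restricted to geodesics, a geodesic ray $\gamma$ in $X$ with $u_\infty([\gamma])<0$, contradicting the hypothesis.

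\emph{Step 1: monotonicity of $\|\nabla u\|^{2}$ along $\phi_t$.} Writing $\dot\phi_t=-\nabla u(\phi_t)$ and using that $\operatorname{Hess}(u)\geq 0$ by convexity,
\begin{gather*}
  \frac{d}{dt}\|\nabla u(\phi_t(o))\|^{2}
  = 2\langle \nabla u,\nabla_{-\nabla u}\nabla u\rangle\big|_{\phi_t(o)}
  = -2\,\operatorname{Hess}(u)(\nabla u,\nabla u)\big|_{\phi_t(o)}\leq 0.
\end{gather*}
Hence the flow speed $\|\nabla u(\phi_t(o))\|$ stays bounded by $C:=\|\nabla u(o)\|$, so by completeness of $X$ the flow exists for all $t\geq 0$ and $d(o,\phi_t(o))\leq Ct$. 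Set $\alpha:=\lim_{t\to+\infty}\|\nabla u(\phi_t(o))\|^{2}\geq 0$. If $\alpha=0$, take $p_n:=\phi_n(o)$ and we are done.

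\emph{Step 2: excluding $\alpha>0$.} If $\alpha>0$, then $\frac{d}{dt}u(\phi_t(o))=-\|\nabla u(\phi_t(o))\|^{2}\leq -\alpha$, so $u(\phi_t(o))\leq u(o)-\alpha t\to -\infty$. Since $u$ is continuous (hence bounded on bounded sets), $\ell_t:=d(o,\phi_t(o))\to+\infty$. Let $\gamma_t:[0,\ell_t]\to X$ be the unit-speed geodesic from $o$ to $\phi_t(o)$ with initial tangent $v_t\in S(T_oX)$. By compactness of $S(T_oX)$ pass to a subsequence with $v_{t_n}\to v$, and let $\gamma$ be the geodesic ray with $\dot\gamma(0)=v$. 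For any fixed $s\geq 0$ and $n$ so large that $\ell_{t_n}\geq s$, the chord inequality for the convex function $u\circ\gamma_{t_n}$ gives
\begin{gather*}
  u(\gamma_{t_n}(s)) \leq \left(1-\frac{s}{\ell_{t_n}}\right)u(o)+\frac{s}{\ell_{t_n}}u(\phi_{t_n}(o)) \leq u(o)-\frac{s\alpha t_n}{\ell_{t_n}}\leq u(o)-\frac{s\alpha}{C}.
\end{gather*}
Since $v_{t_n}\to v$, we have $\gamma_{t_n}(s)\to\gamma(s)$ for each fixed $s$; passing to the limit yields $u(\gamma(s))\leq u(o)-s\alpha/C$, hence $u_\infty([\gamma])=\lim_{s\to+\infty}u(\gamma(s))/s\leq -\alpha/C<0$, contradicting $u_\infty\geq 0$.

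The hard part is Step 2: converting the decay of $u$ along the \emph{non-geodesic} flow line into the decay of $u$ along an actual geodesic ray. The key ingredients are the uniform speed bound $d(o,\phi_t(o))\leq Ct$, the convexity chord inequality on each $\gamma_t$, and the compactness of $S(T_oX)$ used to extract a limiting direction $v$; together they force $u_\infty([\gamma])$ to be strictly negative, which is precisely what contradicts the hypothesis.
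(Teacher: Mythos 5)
The paper does not prove this lemma: it is quoted from Kapovich--Leeb--Millson \cite[Lemma 3.4]{kapovich-leeb-millson-convex-JDG}, so there is no in-paper proof to compare against. Your argument is correct, and it is essentially the standard gradient-flow proof of this fact: the monotonicity of $\|\nabla u\|$ along the flow of $-\nabla u$ (coming from $\operatorname{Hess}u\geq 0$) either yields the desired sequence directly, or gives the linear decay $u(\phi_t(o))\leq u(o)-\alpha t$ together with the speed bound $d(o,\phi_t(o))\leq Ct$, after which the chord inequality along the geodesics from $o$ to $\phi_{t_n}(o)$ and compactness of $S(T_oX)$ produce a ray on which $u_\infty\leq -\alpha/C<0$. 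You are also right that $\nabla u_\infty(p_n)$ in the statement is a typo for $\nabla u(p_n)$. One harmless remark: $u_\infty$ is only known to be well-defined on $\partial_\infty X$ (independent of the representative ray) when $u$ is Lipschitz, as in Lemma \ref{convex-function}, but your contradiction only uses $\lim_{s\to+\infty}u(\gamma(s))/s<0$ for a single explicit ray issuing from $o$, so the argument goes through as written.
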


The next theorem uses ideas from the proof of Theorem 4.3 in
\cite{teleman-symplectic-stability}, with simplifications due to the
previous lemma and the compactness hypothesis.

\begin{teo}\label{semi-stable-abstract}
  If $\spaz $ is compact, then a point $x \in \spaz$ is semi-stable if
  and only if $\la_x \geq 0$.
\end{teo}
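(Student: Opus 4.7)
The plan is to prove both implications using the compactness of $\spaz$, the compactness of $\tits$ in the sphere topology, and three tools developed above: the $G$-equivariance of $\la$ (Lemma \ref{lemma-la-equiv}), its lower semicontinuity (Lemma \ref{semicontinua}), and the gradient lemma of Kapovich--Leeb--Millson (Lemma \ref{kapo}).

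For the implication $\la_x \geq 0 \Rightarrow x$ semi-stable, I plan to apply Lemma \ref{kapo} to $\psi_x : X \ra \R$, which is smooth by \ref{P1}, geodesically convex by Lemma \ref{convstab}, and whose boundary function coincides with $\la_x \geq 0$. The lemma yields a sequence $p_n = g_n K \in X$ along which $\|\nabla \psi_x\|$ tends to zero. A short computation using the cocycle identity \eqref{cociclo-3} and the fact that $L_g$ acts isometrically on $X$ shows that $\|\nabla \psi_x(g_n K)\| = \|\fun(g_n\meno \cd x)\|$, once $\liek^*$ is identified with $\liek$ via $\scalo$. Setting $y_n := g_n\meno \cd x$, the compactness of $\spaz$ provides a subsequential limit $y$, and continuity of $\fun$ from \ref{P6} gives $\fun(y) = 0$. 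Hence $y \in \overline{G \cd x} \cap \mume$ and $x$ is semi-stable.

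For the converse, let $x$ be semi-stable and choose $g_n \in G$ with $g_n \cd x \to y \in \mume$. I first verify that $\la_y \geq 0$ on $\tits$: for any $v \in S(\liek)$, the function $t \mapsto \Psi(y, \exp(itv))$ is convex by \ref{P3}, has derivative $\fun^v(y) = 0$ at $t = 0$ by \eqref{momento-astratto}, and has derivative $\la_y(\e(-v))$ at $+\infty$ by \eqref{la-exp}; since the derivative of a convex function is nondecreasing, $\la_y(\e(-v)) \geq 0$. Now fix $p \in \tits$. By Lemma \ref{lemma-la-equiv} applied to $x' = x$ and $g = g_n$, I rewrite $\la_x(p) = \la_{g_n \cd x}(g_n \cd p)$ for every $n$. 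Since $\tits$ is compact in the sphere topology, a subsequence satisfies $g_n \cd p \to q \in \tits$; thus $(g_n \cd x, g_n \cd p) \to (y, q)$ in $\spaz \times \tits$, and Lemma \ref{semicontinua} gives $\la_y(q) \leq \liminf \la_{g_n \cd x}(g_n \cd p) = \la_x(p)$. Combined with $\la_y(q) \geq 0$, this yields $\la_x(p) \geq 0$.

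The main subtle point is the converse direction. The naive approach of fixing the direction $p$ and applying lower semicontinuity as $g_n \cd x \to y$ only controls $\la_x$ at the varying points $g_n\meno \cd p$, not at the fixed $p$. The essential trick is to move the direction together with the point using $G$-equivariance, writing $\la_x(p) = \la_{g_n \cd x}(g_n \cd p)$, and then to exploit the \emph{joint} lower semicontinuity of $\la$ on $\spaz \times \tits$ together with compactness of $\tits$ to extract a convergent subsequence of directions. This use of joint semicontinuity, rather than semicontinuity in either variable separately, is the essential new ingredient over the argument that one would make for stability.
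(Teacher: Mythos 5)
Your proposal is correct and follows essentially the same route as the paper's own proof: Lemma \ref{kapo} together with the cocycle identity \eqref{cociclo-3} and continuity of $\fun$ for the direction $\la_x\geq 0\Rightarrow$ semi-stable, and $G$-equivariance of $\la$ plus its joint lower semicontinuity and the compactness of $\tits$ for the converse. The only differences are cosmetic: the paper phrases the converse as a contradiction, deduces $\la_y\geq 0$ by citing Theorem \ref{ps} rather than by the direct convexity argument you give, and---since $\spaz$ is only assumed compact Hausdorff, not first countable---uses nets and subnets where you use sequences.
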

\begin{proof}
  Assume that $x \in \spaz$ and that $\la_x\geq 0$. By Lemma
  \ref{kapo} there is a sequence of points $\{p_n\} \subset X$ such
  that $d\psi_x(p_n) \to 0$.  Write $p_n = g_n K$. Using \eqref
  {cociclo-3} we get $ d(L_{g_n}^* \psi_x) = d \psi_{g_n\meno x} $. So
  \begin{gather*}
    d \psi_{g_n\meno x} (o ) = d\psi_ x (g_n K) \circ dL_{g_n} (o) =
    d\psi_ x (p_n) \circ dL_{g_n} (o).
  \end{gather*}
  Since $L_g $ is an isometry of $X$,
  \begin{gather*}
    ||\fun (g_n\meno x) || = || d\psi_{g_n \meno x} (o) || = ||
    d\psi_x (p_n)|| \to 0.
  \end{gather*}
  Since $\spaz$ is compact, we can assume that $g_n\meno x $ converges
  to some point $y\in \overline {G\cd x}$. By \ref{P6} we get
  immediately that $\fun(y)=0$. So $x$ is semi--stable.  Conversely,
  assume that $x$ is semi-stable, i.e. there is
  $y\in \overline{G\cd x} \cap \fun\meno(0)$.  Pick a net of elements
  $g_\alf \in G$ such that $g_\alf x \to y$ (if $\spaz$ satisfies the
  first countability axiom one can just take a sequence).  Assume by
  contradiction that there is some $p\in\tits$ such that $\la_x(p) <0$
  and set $v:=\e\meno(p) \in S(\liek)$. Since $\fun (y) =0$, $y$ is
  polystable, so by Theorem \ref{ps} $\la_y \geq 0$.  Write
  $g_\alf \cd p = \e (\x_\alf)$ for $\x_\alf \in S(\liek)$. By passing
  to a subnet we can assume that $\x_\alf \to w$. Set $q = \e(w)$.
  Then $g_\alf \cd p \to q$ in the sphere topology. By Lemma
  \ref{semicontinua} we have
  \begin{gather*}
    \la_y (q) \leq \liminf_{\alfa} \la_{g_\alfa \cd x} (g_\alf \cd p)
    .
  \end{gather*}
  But $\la_{g_\alf \cd x} (g_\alf \cd p) = \la_x (p) < 0$ and
  $\la_y (q) \geq 0$. Thus we get a contradiction. This proves that
  $\la _x \geq 0$.

\end{proof}

\section{Measures}
\label{sez-misure}

\begin{say}
  If $M$ is a compact manifold, denote by $\misu(M)$ the vector space
  of finite signed Borel measures on $M$.  These measures are
  automatically Radon \cite[Thm. 7.8, p. 217]{folland-real-analysis}.
  Denote by $C(M)$ the space of real continuous function on $M$. It is
  a Banach space with the $\sup$--norm.  By the Riesz Representation
  Theorem \cite[p.223]{folland-real-analysis} $\misu(M)$ is the
  topological dual of $C(M)$.  The induced norm on $\misu(M)$ is the
  following one:
  \begin{gather}
    \label{totvar}
    ||\nu||:= \sup \Bigl \{ \int_M f d\nu: f\in C(M), \sup_M |f| \leq
    1 \Bigr\}.
  \end{gather}
  We endow $\misu(M)$ with the weak-$*$ topology as dual of $C(M)$.
  Usually this is simply called the \emph{weak topology} on measures. We use
  the symbol $\nu_\alf \weak \nu$ to denote the weak convergence of
  the net $\{\nu_\alf\}$ to the measure $\nu$.  Denote by
  $\proba(M) \subset \misu(M)$ the set of Borel probability measures
  on $M$.  We claim that $\pb$ is a compact convex subset of
  $\misu(M)$. Indeed the cone of positive measures is closed and
  $\proba(M)$ is the intersection of this cone with the closed affine
  hyperplane $\{\nu\in \misu(M) : \nu(M) = 1\}$. Hence $\proba(M)$ is
  closed.  For a positive measure $|\nu|=\nu$, so $\proba(M)$ is
  contained in the closed unit ball in $\misu(M)$, which is compact in
  the weak topology by the Banach-Alaoglu Theorem
  \cite[p. 425]{dunford-schwartz-1}.  Since $C(M)$ is separable, the
  weak topology on $\pb$ is metrizable \cite
  [p. 426]{dunford-schwartz-1}.
\end{say}

\begin{say}
  If $f : X \ra Y$ is a measurable map between measurable spaces and
  $\nu $ is a measure on $X$, the \emph{image measure} $f\pf\nu$ is
defined by  $f\pf\nu(A) : = \nu (f\meno A)$.  It
  satisfies the \enf{change of variables formula}
  \begin{gather}
    \label{eq:pushforward}
    \int_Y u (y) d( f\pf\nu)(y) = \int_X u(f(x)) d\nu(x).
  \end{gather}
\end{say}

\begin{lemma}
  \label{prob-continua} Let $M$ be a compact manifold.  If $G$ is a
  Lie group acting continuously on $M$, the map
  \begin{gather}
    \label{azione}
    G\times \pb \ra \pb, \quad (g, \nu) \mapsto g_*\nu,
  \end{gather}
  defines a continuous action of $G$ on $\proba (M) $ provided with
  the weak topology.
\end{lemma}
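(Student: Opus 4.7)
The action axioms $e_*\nu = \nu$ and $(gh)_*\nu = g_*(h_*\nu)$ are immediate from the definition of pushforward and the fact that $(gh)^{-1} = h^{-1}g^{-1}$. The content of the lemma is therefore the joint continuity of the map in \eqref{azione}. Since the weak topology on $\pb$ is metrizable, it is enough to argue with sequences: I will show that if $g_n \to g$ in $G$ and $\nu_n \weak \nu$ in $\pb$, then $(g_n)_*\nu_n \weak g_*\nu$. Unwinding via the change of variables formula \eqref{eq:pushforward}, this means that for every $f \in C(M)$
\[
\int_M f(g_n x)\, d\nu_n(x) \longrightarrow \int_M f(gx)\, d\nu(x).
\]

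The plan is to split the difference as
\[
\int_M (f\circ g_n - f\circ g)\, d\nu_n + \Bigl(\int_M f\circ g\, d\nu_n - \int_M f\circ g\, d\nu\Bigr),
\]
and control the two pieces separately. The second summand tends to $0$ directly from the assumption $\nu_n \weak \nu$, since $f\circ g \in C(M)$. The first summand is bounded in absolute value by $\|f\circ g_n - f\circ g\|_\infty$, using $\nu_n(M)=1$, and the whole argument reduces to showing that
\[
f\circ g_n \longrightarrow f\circ g \quad \text{uniformly on } M.
\]

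The main (and essentially only) step is this uniform convergence. I would deduce it from compactness of $M$ and joint continuity of the action as follows. The map $\Phi\colon G \times M \to \R$, $\Phi(h,x) = f(hx)$, is continuous. Fix $\varepsilon > 0$. For each $x\in M$, pick a product neighborhood $V_x \times W_x$ of $(g,x)$ such that $|\Phi(h,y)-\Phi(g,x)| < \varepsilon/2$ on $V_x \times W_x$; in particular $|\Phi(g,y)-\Phi(g,x)|<\varepsilon/2$ for $y\in W_x$. Extract a finite subcover $W_{x_1},\dots,W_{x_k}$ of $M$ and set $V := \bigcap_{i=1}^k V_{x_i}$, an open neighborhood of $g$. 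For any $h\in V$ and any $y\in M$, choose $i$ with $y\in W_{x_i}$; the triangle inequality then yields $|f(hy)-f(gy)| < \varepsilon$. Since eventually $g_n \in V$, this gives the desired uniform estimate.

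The main obstacle is conceptual rather than technical: one must avoid the trap of using only continuity of $g\mapsto g_*\nu$ in $g$ (for fixed $\nu$) and continuity in $\nu$ (for fixed $g$). Joint continuity requires combining the weak convergence of $\nu_n$ with the uniform convergence $f\circ g_n \to f\circ g$, and the latter is precisely where the compactness of $M$ enters in an essential way.
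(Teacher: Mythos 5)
Your proof is correct and follows essentially the same route as the paper: the same two-term decomposition of $\int \phi\, d(g_n{}_*\nu_n) - \int\phi\, d(g_*\nu)$, with the second term handled by weak convergence and the first by uniform convergence obtained from a tube-lemma compactness argument. The only cosmetic difference is that the paper first shows $g_n \to g$ uniformly with respect to a metric on $M$ and then composes with the uniformly continuous $\phi$, whereas you apply the same compactness argument directly to $(h,x)\mapsto f(hx)$; both are the same idea.
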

\begin{proof}
  The map is obviously an action. To check the continuity let
  $g_\alf\to g$ and $\nu _\alfa \weak \nu$ be converging nets in $G$ and
  $\pb$ respectively (since both are metrizable spaces considering
  sequences would be enough). Fix a distance $d$ on $M$ inducing the
  manifold topology.  We claim that $g_\alf \to g$ uniformly on
  $(M, d)$. Given $\eps>0$ the set
  $A:=\{(h,x): d(h\cd x, g\cd x) <\eps\}$ is open in $G\times M$.
  Since $\{g\}\times M \subset A$ and $M$ is compact, there is a
  neighbourhood $U$ of $g$ such that $U\times M \subset A$.  There
  is $\alf_0$ such that $g_\alf\in U$ for $\alf \geq \alf_0$, so
  \begin{gather*}
    \sup_{x\in M} d (g_\alf \cd x , g\cd x) < \eps, \quad \text{for }
    \alfa\geq \alf_0.
  \end{gather*}
  This proves the claim.  Since any $\phi\in C(M)$ is uniformly
  continuous, it follows that $\phi \circ g_\alf \to \phi \circ g $
  uniformly on $M$.  To prove the continuity of the action we need to
  show that $g_\alfa \pf \nu_ \alf \weak g\pf \nu$, i.e.
  \begin{gather*}
    \int_M \phi \, d(g_\alf \pf \nu_\alf ) \to \int_M \phi\, d (g\pf
    \nu), \quad \text{for any } \phi \in C(M).
  \end{gather*}
  In fact
  \begin{gather*}
    \Big | \int _M \phi \, d(g_\alf \pf \nu_\alf ) - \int_M \phi \, d
    (g\pf \nu) \Big | = \Big | \int _M \phi \circ g_\alf \, d \nu_\alf
    -
    \int_M \phi\circ g \, d\nu \Big | \leq \\
    \leq \Big| \int _M \phi \circ g_\alf \, d \nu_\alf - \int_M \phi
    \circ g \, d\nu _\alf \Big | + \Big | \int _M \phi \circ g \, d
    \nu _\alf - \int_M \phi \circ g\, d\nu \Big| .
  \end{gather*}
  Since $\nu_\alf(M) =1$, the first term is bounded by
  $|| \phi\circ g_\alfa - \phi \circ g||$, so tends to $0$. The second
  term tends to $0$, because $\nu_\alf \weak \nu$.
\end{proof}

\begin{say}
  The map \eqref{azione} is not continuous if $\pb$ is endowed with
  the topology coming from the norm \eqref{totvar}. For example if
  $g_n \to g $ in $ G$, but $g_n \cd x \neq g\cd x$ for some point
  $x\in M$, then, denoting by $\delta_x$ the Dirac measure supported
  at $x$, we have $g_n\pf \delta_x = \delta_{g_n \cd x}$ and
  $||g_n \pf \delta_x - g \pf \delta_x||=2$.
\end{say}

\begin{lemma}\label{umpfi}
  Let $X$ be a vector field on $M$ with flow $\{\phi_t\}$. If
  $ \nu \in \misu(M)$ and $X$ vanishes $\nu$--almost everywhere, then
  $\phi_t\pf \nu =\nu$ for any $t$.
\end{lemma}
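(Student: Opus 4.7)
The plan is to exploit the hypothesis that $X=0$ $\nu$-almost everywhere in order to reduce the claim to a pointwise identity on the zero set of $X$. Define
\begin{gather*}
Z:=\{x\in M : X(x)=0\}.
\end{gather*}
Since $X$ is a continuous (in fact smooth) vector field, $Z$ is closed and hence Borel. The hypothesis means precisely that $|\nu|(M\setminus Z)=0$, so in particular the positive and negative parts in the Jordan decomposition of $\nu$ each vanish on $M\setminus Z$. Because every point of $Z$ is a stationary point of the flow, $\phi_t(x)=x$ for all $x\in Z$ and all $t\in\R$; equivalently, $\phi_t$ restricts to the identity on $Z$, and $\phi_t^{\,-1}$ does as well.

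I will then use this to compare $\phi_t\pf\nu(A)$ with $\nu(A)$ for an arbitrary Borel set $A\subset M$. Since $\phi_t$ is a diffeomorphism, $\phi_t^{\,-1}(A)$ is Borel, so one may split
\begin{gather*}
\phi_t\pf\nu(A)=\nu(\phi_t^{\,-1}(A))=\nu\bigl(\phi_t^{\,-1}(A)\cap Z\bigr)+\nu\bigl(\phi_t^{\,-1}(A)\setminus Z\bigr).
\end{gather*}
The second summand vanishes because $\phi_t^{\,-1}(A)\setminus Z\subset M\setminus Z$ has $|\nu|$-measure zero. For the first summand, the identity $\phi_t|_Z=\id_Z$ gives $\phi_t^{\,-1}(A)\cap Z=A\cap Z$, and therefore $\nu(\phi_t^{\,-1}(A)\cap Z)=\nu(A\cap Z)=\nu(A)-\nu(A\setminus Z)=\nu(A)$. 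This yields $\phi_t\pf\nu=\nu$ for every $t$. There is no real obstacle here; the only subtle point worth flagging is that one must pass through $|\nu|$ (or the Jordan decomposition) to handle signed measures, since ``$\nu$-a.e.'' for a signed measure is, by standard convention, $|\nu|$-a.e.
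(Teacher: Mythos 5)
Your proof is correct and follows essentially the same route as the paper's: both decompose $\phi_t^{-1}(A)$ into the part lying in the zero set of $X$ (where the flow is the identity) and the complementary part, which is $\nu$-null. The only difference is cosmetic (you work with $Z=\{X=0\}$ where the paper works with its complement $N$), and your explicit remark that ``$\nu$-a.e.'' means ``$|\nu|$-a.e.'' for a signed measure is a reasonable clarification the paper leaves implicit.
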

\begin{proof}
  Set $N:=\{x\in M : X(x) \neq 0\}$. Then $\nu(N)=0$ and for any
  $t\in \R$ and any $x\not \in N$, $\phi_t(x)=x$. In particular both
  $N$ and $M-N$ are $\phi_t$-invariant. If $A\subset M$ is measurable,
  then
  \begin{gather*}
    \phi_{-t}(A) =\phi_{-t}((A\setminus N) \sqcup (N\cap
    A))=(A\setminus N)\sqcup \phi_{-t} (N\cap A).
  \end{gather*}
  Since $\phi_{-t} (N\cap A) \subset N$,
  $\phi_t\pf\nu(A) = \nu (\phi_{-t}( A) ) =\nu(A\setminus N)=\nu(A)$.
\end{proof}

\begin{say}
  In the sequel for $ g \in G$ and $ \nu \in \pb$, we will use the
  notation
  \begin{gather*}
    g \cd \nu := g\pf \nu.
  \end{gather*}
\end{say}

\begin{say}
  From now on we assume that $M$ is a compact K\"ahler manifold, that $K$ is a
  compact connected Lie group acting on $M$ in a Hamiltonian fashion
  with momentum mapping $\mu$ and complexification $G$. All the
  notation will be as in \ref{notazione-varieta}.
\end{say}

\begin{cor} \label{plurr} If $v \in \lieg$ and $v_M(x) =0$ for every
  $x$ outside a set of $\nu$--measure zero, then
  $\exp(\C v) \subset G_\nu$.
\end{cor}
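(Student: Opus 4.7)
The plan is to reduce the claim to a direct application of Lemma \ref{umpfi} applied, for each $z \in \C$, to the real vector field $(zv)_M$ on $M$. The crucial observation is that the infinitesimal action $w \mapsto w_M$ from $\lieg$ to vector fields on $M$ is $\C$-linear: for $w\in \liek$ the holomorphicity of the $G$-action yields the standard identity $(iw)_M = J w_M$, and by $\R$-linearity this identity extends to all of $\lieg$.

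First I would fix $z = a+ib \in \C$ and use the $\C$-linearity to write
\begin{gather*}
  (zv)_M \;=\; a\, v_M + b\, (iv)_M \;=\; a\, v_M + b\, J v_M.
\end{gather*}
Let $N$ be a $\nu$-null set off which $v_M$ vanishes. The displayed formula shows that $(zv)_M$ also vanishes off $N$, so $(zv)_M = 0$ outside a set of $\nu$-measure zero.

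Next I would invoke Lemma \ref{umpfi} with $X := (zv)_M$, whose real flow is $\phi_t(x) = \exp(t zv)\cd x$. The lemma then yields $\exp(t zv)\pf\nu = \nu$ for every $t \in \R$; setting $t=1$ gives $\exp(zv)\cd\nu = \nu$, i.e.\ $\exp(zv) \in G_\nu$. Since $z\in\C$ was arbitrary, $\exp(\C v) \subset G_\nu$.

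I do not expect any real obstacle here: the result is essentially immediate once one combines Lemma \ref{umpfi} with the $\C$-linearity of the infinitesimal action on a K\"ahler $G$-manifold, and this is precisely why the corollary is placed right after that lemma.
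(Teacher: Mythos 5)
Your proof is correct and follows essentially the same route as the paper, which simply notes that $(iv)_M = J(v_M)$ vanishes on the same $\nu$-null set and then applies Lemma \ref{umpfi}; you have merely spelled out the $\C$-linearity and the flow identification in more detail.
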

\begin{proof} Since $(i v)_M =J (v_M)$ the result follows immediately
  from the above Lemma. \end{proof}

\begin{prop} \label{P5M} Let $M, G, K$ and $ \mu$ be as in
  \ref{notazione-varieta}.  The function
  \begin{gather}
    \label{defipsim} \PsiM : \proba(M) \times G \ra \R, \quad
    \PsiM(\nu, g) : = \int_M \Psim (x, g) d\nu(x)
  \end{gather}
  is a Kempf-Ness function for $(\pb, G, K)$.  The corresponding function on the symmetric space $X = G/K$ is
  \begin{gather}
    \label{defipsima}
    \psiM _\nu : X \ra \R , \quad \psiM_\nu( gK) := \PsiM (\nu,
    g\meno) = \int_M \psim_x(gK) d\nu(x).
  \end{gather}
The momentum mapping $    \fun : \proba(M) \ra \liek^*$ is given by formula \eqref {bly-intro}, i.e.
  \begin{gather}
 \fun (\nu) : = \int_M \mu(x)
    d\nu(x). \label{def-momento-misure}
  \end{gather}
\end{prop}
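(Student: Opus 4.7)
The plan is to verify the axioms \ref{P1}--\ref{P6} for $\PsiM$ one at a time, reducing each property to the corresponding property of $\Psim$, which by \ref{notazione-varieta} is a Kempf-Ness function for $(M, G, K)$. Throughout, the compactness of $M$ legitimates every interchange of differentiation or limit with the integral against $\nu$, because $\Psim(x,\cdot)$ and its $g$-derivatives depend continuously on $x \in M$ and are therefore uniformly bounded in $x$ on compact subsets of $G$.

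Properties \ref{P1}, \ref{P2}, and \ref{P4} are routine bookkeeping. Smoothness in $g$ follows from differentiation under the integral sign; left $K$-invariance is immediate from $\Psim(x, kg) = \Psim(x, g)$; and the cocycle condition reduces to
\begin{gather*}
  \PsiM(\nu, g) + \PsiM(g\cd\nu, h) = \int_M \Psim(x, g)\, d\nu(x) + \int_M \Psim(g\cd x, h)\, d\nu(x) = \int_M \Psim(x, hg)\, d\nu(x),
\end{gather*}
using the change of variables formula \eqref{eq:pushforward} and the cocycle condition for $\Psim$. For \ref{P5}, since $M$ is compact the function $\|\mu\|$ is bounded, and the remark in \ref{notazione-varieta} gives a Lipschitz constant for $\psim_x$ that is uniform in $x$; integrating against the probability measure $\nu$ shows that $\psiM_\nu$ is Lipschitz with the same constant. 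For \ref{P6}, differentiating under the integral in \eqref{momento-astratto} yields
\begin{gather*}
  \sx \fun(\nu), v\xs = \desudtzero \int_M \Psim(x, \exp(itv))\, d\nu(x) = \int_M \mu^v(x)\, d\nu(x),
\end{gather*}
which is exactly formula \eqref{def-momento-misure}; continuity of $\fun$ in the weak topology is then immediate because each component $\mu^v$ is continuous on $M$.

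The only delicate point is the equality clause in \ref{P3}. Non-negativity of the second derivative follows pointwise from \ref{P3} for $\Psim$ after commuting $d^2/dt^2$ with the integral. If $(d^2/dt^2)|_{t=0}\PsiM(\nu, \exp(itv)) = 0$, the non-negative integrand must vanish $\nu$-almost everywhere, so \ref{P3} for $\Psim$ forces $\exp(\C v) \subset G_x$, equivalently $v_M(x) = 0$, on a set of full $\nu$-measure; Corollary \ref{plurr} then yields $\exp(\C v) \subset G_\nu$. For the converse, if $\exp(\C v) \subset G_\nu$, integrating the cocycle identity for $\Psim$ and using $\exp(isv)\pf\nu = \nu$ gives $\PsiM(\nu, \exp(i(t+s)v)) = \PsiM(\nu, \exp(itv)) + \PsiM(\nu, \exp(isv))$; continuity makes $t \mapsto \PsiM(\nu, \exp(itv))$ linear, and its second derivative vanishes identically. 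This equality clause, together with the correct invocation of Corollary \ref{plurr}, is the one non-formal step; everything else is a direct Fubini-style transfer from $\Psim$ to its $\nu$-average $\PsiM$.
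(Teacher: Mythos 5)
Your proof is correct and follows essentially the same route as the paper's: each axiom \ref{P1}--\ref{P6} is transferred from $\Psim$ to its $\nu$-average by differentiating under the integral sign, with the degenerate case of \ref{P3} handled via Corollary \ref{plurr} and the Lipschitz bound and momentum computation matching the paper's. Your only addition is an explicit verification of the converse direction of the \ref{P3} equivalence (linearity of $t\mapsto \PsiM(\nu,\exp(itv))$ when $\exp(\C v)\subset G_\nu$, via the cocycle identity), which the paper leaves implicit since it follows from Lemma \ref{lineare-1} once \ref{P4} is known.
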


\begin{proof}
  Since $\Psim$ is left-invariant with respect to $K$, also $\PsiM$ is
  left-invariant with respect to $K$.  Fix $\nu \in \pb$. By
  differentiation under the integral sign $\PsiM(\nu, \cd)$ is a
  smooth function on $G$ and for $\x \in \liek$ we have
  \begin{gather*}
    \frac{\mathrm{d^2}}{\mathrm{dt}^2 } \PsiM(\nu,\exp(it\x )) =\int_M
    \biggl ( \frac{\mathrm{d^2}}{\mathrm{dt}^2 } \Psim(x,\exp(it\x ))
    \biggr ) d\nu(x) \geq 0,
  \end{gather*}
  since the integrand is non-negative by \ref{P3}.  If
  \begin{gather*}
    \frac{\mathrm{d^2}}{\mathrm{dt}^2 } {\bigg \vert_{t=0} }
    \PsiM(\nu,\exp(it\x ))=0,
  \end{gather*}
  then
  \begin{gather*}
    \frac{\mathrm{d^2}}{\mathrm{dt}^2 } {\bigg \vert_{t=0} }
    \Psim(x,\exp(it\x ))=0 \quad \text{ $\nu$-almost everywhere}.
  \end{gather*}
  Again by \ref{P3} this implies that $\x _M=0$ $\nu$-almost
  everywhere.  By Corollary \ref{plurr} it follows that
  $ \exp(\C \x ) \subset G_\nu$. We have proved
  \ref{P1}--\ref{P3}. The cocycle condition for $\PsiM$ follows
  immediately from the cocycle condition for $\Psim$.  Indeed,
  \begin{gather*}
    \begin{split}
      \PsiM(\nu,gh)&=\int_M \Psim (x, gh) d\nu(x)=\int_M \Psim(x,g) d \nu(x) + \int_M \Psim (gx,h ) d\nu \\
      &=\int_M \Psim(x,g) d \nu(x) + \int_M \Psim (y,h ) d (g\cd \nu) \\
      &=\PsiM (\nu,g)+\PsiM (g\cd \nu, h).
    \end{split}
  \end{gather*}
  Fix $\nu \in \pb$.  It is immediate to verify that the function
  $\psiM$ defined as in \eqref{defpsi} is given by \eqref{defipsima}
  Next we verify that $\psiM_\nu$ is Lipschitz. 
Next we compute the momentum mapping.
\begin{gather*}
  \sx \fun (\nu), v \xs = \int_M \desudtzero \Psi^M(x, \exp(itv)) d\nu
  (x) =
  \int_M \sx \mu(x), v\xs d\nu(x) =\\
  = \sx \int_M \mu(x) d\nu(x), v\xs.
\end{gather*}
This proves that the map $\fun$ defined as in \eqref {momento-astratto} is given by \eqref{def-momento-misure}.
Therefore it is clearly continuous on $\proba(M)$, i.e.  \ref{P6}
  holds.
Finally we verify that $\psiM_\nu$ is a Lipschitz function.
Indeed denoting by $o=K$ the origin in $X$
  \begin{gather*}
    (d\psiM_\nu) _{go} (dL_g(\dga^v(0))) = d\psiM _{g\meno \cd \nu}
    (o) (\dot{\ga}^v(0)) = - \sx \fun(g\meno \cd \nu) , v\xs
    =  \\
    =-\int_M \langle \mu( x),v \rangle d (g\meno\cd \nu) (x).
  \end{gather*}
  Keeping in mind that $M$ is compact and recalling that $L_g$ is an
  isometry of $X$, we get
  $||(d\psiM_\nu)_{go}|| \leq ||\mu||_{L^\infty}$.  Thus $\psiM_{\nu}$
  is a Lipschitz function, i.e. \ref{P5} holds. 
\end{proof}

\begin{teo}
  [Linearization Theorem]
  \label{line}
  Let $M$, $G, K$ and $\mu$ be as in \ref {notazione-varieta}.  If $x$
  is a fixed point of $G$, then there exist an open subset
  $S \subset T_ xM$, stable under the isotropy representation of $G$,
  an open $G$-stable neighbourhood $\Omega$ of $x $ in $M$ and a
  $G$-equivariant biholomorphism $h : S \ra \Omega$.  One can further
  require that $h(0)= x$ and $dh_0 = \Id_{T_xM}$.
\end{teo}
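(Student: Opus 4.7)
The plan is to produce the equivariant biholomorphism in two stages: first use Bochner averaging to obtain a $K$-equivariant biholomorphism, then upgrade $K$-equivariance to $G$-equivariance by an analytic continuation argument, exploiting that $\liek\oplus i\liek=\lieg$. Since $x$ is $G$-fixed, the isotropy representation of $G$ on $T_xM$ is a linear holomorphic representation and its restriction to $K$ is unitary for a suitable $K$-invariant Hermitian product. Choose a $K$-invariant holomorphic chart $\phi:U\to T_xM$ centered at $x$ with $d\phi_x=\Id_{T_xM}$ and average,
\[
\tilde\phi(y):=\int_K k\meno\cd\phi(k\cd y)\,dk,
\]
where $k\meno$ acts on $T_xM$ via the isotropy representation; the result is a $K$-equivariant holomorphic map whose differential at $x$ is still the identity. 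Its inverse $h_0$ is a $K$-equivariant biholomorphism between a $K$-invariant open $S_0\subset T_xM$ of $0$ and a $K$-invariant open $\Omega_0\subset M$ of $x$, with $h_0(0)=x$ and $dh_0(0)=\Id_{T_xM}$.

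To upgrade $K$-equivariance to $G$-equivariance, for each $\xi\in S_0$ set $V_\xi:=\{g\in G:g\cd\xi\in S_0\}$, an open neighborhood of $K$ in $G$. The two holomorphic maps $V_\xi\to M$ given by $g\mapsto h_0(g\cd\xi)$ and $g\mapsto g\cd h_0(\xi)$ coincide on $K$ by the $K$-equivariance of $h_0$. The preimage of the diagonal $\Delta\subset M\times M$ under the combined holomorphic map $g\mapsto(h_0(g\cd\xi),g\cd h_0(\xi))$ is a complex analytic subset of $V_\xi$ containing $K$; since $K$ is a totally real submanifold of $G$ with $\liek\oplus i\liek=\lieg$, the identity principle forces this analytic set to equal the connected component of $V_\xi$ containing $K$. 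Hence $h_0(g\cd\xi)=g\cd h_0(\xi)$ for every $\xi\in S_0$ and every $g$ in that connected component.

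Finally, I shrink to a $K$-invariant open neighborhood $S_0'\subset S_0$ of $0$, set $\Omega_0':=h_0(S_0')$ and define
\[
S:=G\cd S_0'\subset T_xM,\qquad\Omega:=G\cd\Omega_0'\subset M,
\]
which are open and $G$-stable, and extend $h_0$ by $h(g\cd\xi):=g\cd h_0(\xi)$ for $\xi\in S_0'$ and $g\in G$. Well-definedness follows from the equivariance identity of the previous paragraph, applied along paths in $G$ using its connectedness. The resulting $h$ is then $G$-equivariant, holomorphic, a biholomorphism onto $\Omega$, and satisfies $h(0)=x$ and $dh_0=\Id_{T_xM}$. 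I expect the main obstacle to be precisely this consistency/chaining step: one must choose $S_0'$ small enough that, whenever $\xi_1,\xi_2\in S_0'$ and $g\in G$ satisfy $g\cd\xi_1=\xi_2$, the element $g$ lies in the connected component of $V_{\xi_1}$ containing $K$, so that the analytic continuation identity can be invoked and the extension is unambiguous on the full $G$-saturation $S$.
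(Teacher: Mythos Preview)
The paper does not give a proof of this theorem at all: immediately after the statement it simply writes ``For the proof see \cite[\S 14]{heinzner-schwarz-Cartan}, \cite{heinzner-huckleberry-Inventiones}, \cite{heinzner-loose} and \cite{sjamaar-Annals}.'' So there is no proof in the paper to compare with; the result is imported from the literature.

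Your outline---Bochner averaging to obtain a $K$-equivariant holomorphic chart, followed by the identity principle using that $K$ is a maximal totally real submanifold of $G=K^{\C}$---is exactly the standard strategy used in those references. The first two steps are correct as written: the averaged map $\tilde\phi$ is holomorphic with $d\tilde\phi_x=\Id$, and the two holomorphic maps $g\mapsto h_0(g\cdot\xi)$ and $g\mapsto g\cdot h_0(\xi)$ agree on $K$, hence on the connected component of $V_\xi$ containing $K$.

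The gap you yourself flag in the last step is genuine and is not resolved by the vague phrase ``applied along paths in $G$ using its connectedness'': a path from $e$ to $g$ in $G$ need not stay inside $V_{\xi_1}$, so chaining does not work directly. In the cited references this is handled by the theory of \emph{universal complexifications} of $K$-spaces (Heinzner): the $K$-equivariant biholomorphism $h_0:S_0\to\Omega_0$ induces, by the universal property, a $G$-equivariant biholomorphism between $G$-invariant open neighbourhoods $S\subset T_xM$ and $\Omega\subset M$, because both $T_xM$ and $M$ are already $G$-complexifications of their $K$-restrictions near the fixed point. An equivalent hands-on argument is to work with the open set $W=\{(g,\xi)\in G\times S_0: g\cdot\xi\in S_0\}$ and to show, using the Cartan decomposition $G=K\exp(i\liek)$ and convexity of a $K$-invariant ball $S_0$, that $W$ is connected; then the identity $h_0(g\cdot\xi)=g\cdot h_0(\xi)$ holds on all of $W$ at once and well-definedness of the extension (and of its inverse) follows immediately.
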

For the proof see \cite[\S 14]{heinzner-schwarz-Cartan}, \cite
{heinzner-huckleberry-Inventiones}, \cite{heinzner-loose} and
\cite{sjamaar-Annals}.

\begin{say}
  \label{notazione-flusso}
  Fix $v\in S(\liek)$.  The \emph{gradient flow} of a function
  $f\in \cinf (M)$ is usually defined as the flow of the vector field
  $-\grad f$.  Let $\{\phi_t\}$ denote the gradient flow of
  $\mu^v$. Since $\grad \mu^v = J v_M = (iv)_M$, we have
  $\phi_t(x) = \exp(-itv) \cd x$.

\end{say}

\begin{prop}\label{limite}
  For any $x\in M$ the limit
  \begin{gather}
\label{def-limes}
    \limes(x) : = \lim_{t\to -\infty} \phi_t(x) = \lim_{t\to +\infty }
    \exp(it\x ) \cd x.
  \end{gather}
  exists.
\end{prop}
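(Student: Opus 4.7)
The plan is to study the gradient-descent trajectory $\phi_t(x)=\exp(-itv)\cdot x$ of $\mu^v$ on the compact K\"ahler manifold $M$, combining the Morse-Bott structure of $\mu^v$ with the Linearization Theorem.

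First, $\frac{d}{dt}\mu^v(\phi_t(x)) = -\|\grad\mu^v\|^2_{\phi_t(x)}\leq 0$ and compactness of $M$ yield that $t\mapsto\mu^v(\phi_t(x))$ is non-increasing and bounded, hence has a limit $c$ as $t\to -\infty$. Let $L := \bigcap_{T\leq 0}\overline{\{\phi_t(x):t\leq T\}}$ be the $\alpha$-limit set; by compactness $L$ is non-empty, compact and connected. If $\phi_{t_n}(x)\to y\in L$, then for any fixed $s\in\R$ continuity of the flow gives $\phi_{t_n+s}(x)\to\phi_s(y)$, while monotonicity of $\mu^v\circ\phi$ forces $\mu^v(\phi_s(y))=c$ for all $s$. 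Hence $\grad\mu^v(\phi_s(y))=0$, so $v_M(y)=0$. Therefore $y$ is fixed by $\exp(\R v)$, by $T_v:=\overline{\exp(\R v)}\subset K$, and by Proposition~\ref{compatible} also by $T_v^\C$. Thus $L\subset\mathrm{Fix}(T_v^\C)$, and by connectedness $L$ lies in a single connected component $C$.

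Fix $y_0\in L$ and apply the Linearization Theorem (Theorem~\ref{line}) to the $T_v^\C$-action at $y_0$: there is a $T_v^\C$-equivariant biholomorphism $h:S\to\Omega$ with $S\subset T_{y_0}M$ open and $T_v^\C$-stable, $h(0)=y_0$ and $dh_0=\Id$. Diagonalize the linear $v$-action and decompose $T_{y_0}M = V_+\oplus V_0\oplus V_-$ according to whether the real weight $\beta_j$ is positive, zero or negative. In these coordinates $\phi_t$ acts by $z_j\mapsto e^{t\beta_j}z_j$, and a direct computation gives $\mu^v(z)=c-\frac{1}{2}\sum_j\beta_j|z_j|^2+O(|z|^3)$, while $C\cap\Omega$ is locally identified with $V_0\cap S$.

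The main obstacle is to show that the trajectory enters and remains in $\Omega$ with its $V_-$-component identically zero. The bound $\mu^v(\phi_t(x))\leq c$ yields the cone inequality $\sum_j\beta_j|z_j|^2\geq 0$ along the trajectory, which, combined with the exponential growth of $V_-$-components and exponential decay of $V_+$-components as $t\to-\infty$, forces the $V_-$-component of $z^{(n)}:=h^{-1}(\phi_{t_n}(x))$ to vanish for a suitable subsequence $t_n\to-\infty$ realizing $\phi_{t_n}(x)\to y_0$. A bootstrap argument rules out the trajectory leaving the chart, after which the $V_+$-components decay to $0$ and the $V_0$-components are constant, giving $\limes(x)=h(0,z_0,0)$ for a unique $z_0\in V_0\cap S$. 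Alternatively, \L{}ojasiewicz's gradient inequality applied to the real-analytic function $\mu^v$ yields convergence of the trajectory to a single critical point without any linearization analysis.
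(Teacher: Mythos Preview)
Your outline follows the paper's strategy (study the $\alpha$-limit set, show it consists of fixed points of $T_v^\C$, then linearize), but the decisive step is left incomplete. The paper's argument hinges on a feature of the Linearization Theorem that you do not use: the open set $S\subset T_{y_0}M$ is \emph{$T_v^\C$-stable} and $h$ is $T_v^\C$-equivariant. Since $\phi_t=\exp(-itv)\in T_v^\C$, once the trajectory enters $\Omega=h(S)$ at some time $t_{n_0}$ it never leaves, and in the chart it is governed exactly by the linear flow $w\mapsto\exp((t-t_{n_0})H)\,w$ for \emph{all} $t$. The rest is then immediate: the component of $w$ in the subspace that blows up as $t\to-\infty$ must vanish (otherwise $\exp((t_n-t_{n_0})H)w$ could not remain in a fixed small ball along the subsequence $t_n$), and the limit is the $V_0$-component $w_0$.

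Your cone-inequality approach is circular as written. To assert that ``$\phi_t$ acts by $z_j\mapsto e^{t\beta_j}z_j$'' and to invoke the Taylor expansion of $\mu^v$, you already need the trajectory to lie in $\Omega$; but that is precisely what your ``bootstrap argument rules out the trajectory leaving the chart'' is supposed to deliver, and no such argument is supplied. Without the $T_v^\C$-stability of $S$ there is no a priori reason the trajectory cannot exit and re-enter $\Omega$ infinitely often, and the quadratic inequality $\sum_j\beta_j|z_j|^2\ge O(|z|^3)$ by itself does not prevent this.

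Two smaller points. Your appeal to Proposition~\ref{compatible} is misplaced: that result concerns compatibility of $G_x$ when $\fun(x)=0$, which is irrelevant here. The correct one-line argument is that $v_M(y)=0$ implies $(iv)_M(y)=Jv_M(y)=0$, so $\exp(\C v)$ and hence its closure $T_v^\C$ fix $y$. As for the \L{}ojasiewicz alternative, the K\"ahler metric is only assumed smooth, so $\mu^v$ need not be real-analytic; if you want to go this route you must invoke (and justify) a \L{}ojasiewicz inequality for Morse--Bott functions rather than assert real-analyticity.
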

\begin{proof}
  Consider the set $L_\alfa(x) $ formed by all points $y\in M$ such
  that there is a sequence $t_n \to -\infty$ with
  $ y = \lim_{n\to \infty} \phi_{t_n}(x) $.  It follows from the
  compactness of $M$ that $L_\alfa(x)$ is a non--empty subset of
  $M$. Moreover $L_\alfa(x)$ is invariant under the flow.  (See
  \cite[Ex. 1 p. 164]{hirsch-differential-topology} for more details.)
  Fix a point $y\in L_\alfa(x)$ and fix a sequence $\{t_n\}$ such that
  $y = \lim_{n\to \infty} \phi_{t_n}(x)$ and $t_n\to -\infty$.  We
  claim that $y$ is a fixed point of the flow, i.e. $v_M(y)=0$.  In
  fact if $v_M(y) \neq 0$, one can linearize the vector field $v_M$ on
  a neighourhood $U$ of $y$. Since $\phi_t$ is a gradient flow one can
  assume that for any point $z \in U$ the flow
  $\phi_t(z) = \exp(-itv)\cd z$ lies out of $U$ for $t $ sufficiently
  negative. But there is $n_0$ such that $\phi_{t_n}(x) \in U$ for
  $n\geq n_0$. So $z = \phi_{t_{n_0}} (x)\in U$, and
  $\phi_{t_n - t_{n_0}} (z) = \phi_{t_n}(x)$ belongs to $U$ for any
  $n$, although $t_n - t_{n_0} \to -\infty$. This yields a
  contradiction and proves that necessarily $v_M(y)=0$ as desired.
  Set $T:=\overline{\exp(\R v)}$.  Then $T$ and its complexification
  $T^\C$ fix $y$.  We get an isotropy action $T^\C \ra \Gl(T_y M)$,
  defined by $a \mapsto da_y$.  The Linearization Theorem \ref{line}
  tells us that one can find an open subset $S \subset T_yM$,
  invariant under the isotropy action, an open $T^\C$--invariant
  subset $\Omega \subset M$ and a $T^\C$--equivariant biholomorphism
  $h : S \ra \Omega$ such that $h(0) = y$, $dh_0 = \Id_{T_yM}$.  Since
  $\exp(\C v) \subset T^\C$ and $\phi_t = \exp(-itv)$, we also have an
  action of $\R$ on $T_yM$, given by $t\mapsto (d\phi_t)_y$.  The
  infinitesimal generator of this action is the symmetric operator
  $H $ corresponding to the Hessian of $-\mu^v$ at $y$, see
  \cite[p. 215]{heinzner-stoetzel-global}.  In other words
  $ (d\phi_t)_y = \exp(tH)$.  Denote by
  $T_yM = V_+ \oplus V_0 \oplus V_-$ the decomposition corresponding
  to the sign of the eigenvalues of $H$.  Since $S$ is invariant by
  $\exp(\R H)$, we have necessarily $V_+ \oplus V_- \subset S$.  Fix a
  small ball $B(0, r) \subset S$.  There is $n_0$ such that
  $h\meno (\phi_{t_n}(x) ) \in B(0,r)$ for any $n\geq n_0$.  Set
  $w:= h\meno (\phi_{t_{n_0}}(x) ) $. Then
  $\phi_{t_n}(x) = \phi_{t_n - t_{n_0}} (\phi_{t_{n_0}}(x))$ and
  $h\meno (\phi_{t_n}(x) ) = \exp((t_n - t_{n_0}) H) \cd w$.  Let
  $w=w_++ w_0+w_-$ be the decomposition with $w_\pm \in V_\pm $ and
  $w_0\in V_0$.  If the component $w_+$ were nonzero, we would have
  $ \lim_{t \to -\infty } || \exp(tH) w|| = +\infty $.  Instead
  $\exp((t_n - t_{n_0}) H) \cd w=h\meno (\phi_{t_n}(x) ) \in B(0, r)$
  for any $n\geq n_0$. Therefore $w_+=0$.  It follows that
  \begin{gather*}
    \lim_{t\to -\infty } \exp(tH) w = w_0,\\
    \lim_{t \to -\infty } \phi_t(x) = \lim_{t \to -\infty }
    \phi_t(\phi_{t_{n_0}}(x)) = h(w_0).
  \end{gather*}
  This proves that the limit exists.
\end{proof}

\begin{say}
  \label{say-frankel}
  Fix again $v\in S( \liek)$ and the notation of
  \ref{notazione-flusso}.  By Frankel Theorem (see
  e.g. \cite[Thm. 2.3, p. 109]{audin-torus-actions} or
  \cite[p. 180]{mcduff-salamon-symplectic}) the function $\mu^v$ is a
  Morse-Bott function with critical points of even index, therefore
  all its local maximum points are global maximum points and all its
  level sets are connected.  Let $c_0 < \cds < c_r$ be the critical
  values of $\mu^v$ and let $C_i:=(\mu^v)\meno ( c_i)$.  Since the
  level sets of $\mu^v$ are connected, the $C_i$'s are exactly the
  connected components of $\Crit(\mu^v)$.  Set
  \begin{gather}
    \label{def-wu}
    W^u_i := \{ x\in M: \limes (x) \in C_i \},
  \end{gather}
  This is the \emph{unstable manifold} of the critical component $C_i$
  for the gradient flow of $\mu^v$.  It follows from the previous
  Proposition that
  \begin{gather}
    \label{scompstabile}
    M = \bigsqcup_{i=0}^r W^u_i.
  \end{gather}
  For any $i$ the map
  \begin{gather*}
    \limes\restr{W^u_i} : W^u_i \ra C_i
  \end{gather*}
  is a smooth fibration with fibres diffeomorphic to $\R^{l_i}$ where
  ${l_i}$ is the index (of negativity) of the critical submanifold
  $C_i$. Since all local maximum points of $\mu^v$ are global maximum
  points, we have $\dim_\R C_i + l_i =\dim_\R M$ if and only if
  $i=r$. This means that $W_i^u$ is open only for $i=r$. It follows
  from \eqref{scompstabile} that $W^u_r $ is also dense.
\end{say}

\begin{teo} \label{calcolo-lambda} With the notation above we have
  \begin{gather*}
    \la_\nu(\e(-v)) = \sum_{i=0}^r c_i \cd \nu(W^u_i).
  \end{gather*}
\end{teo}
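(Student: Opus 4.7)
The plan is to unwind the definition of $\la_\nu(\e(-v))$ directly, push the relevant limit inside the integral defining $\PsiM$, and identify the pointwise limit using the dynamics of $\exp(itv)$ on $M$.

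\textbf{Step 1: Rewrite as a limit of integrals.} By formula \eqref{la-exp} applied to $\PsiM$ and with $v$ replaced by $-v$,
\begin{equation*}
  \la_\nu(\e(-v)) = \lim_{t\to +\infty} \desudt \PsiM(\nu, \exp(itv)).
\end{equation*}
Using the definition \eqref{defipsim}, I would differentiate under the integral sign. This swap is legitimate because $\desudt \Psim(x, \exp(itv)) = \mu^v(\exp(itv)\cd x)$ by \eqref{derivata-t} (combined with \eqref{caso-mundet}), and this quantity is jointly continuous in $(t,x)$ and uniformly bounded by $\|\mu^v\|_{L^\infty(M)} < \infty$ since $M$ is compact. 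Consequently,
\begin{equation*}
  \desudt \PsiM(\nu, \exp(itv)) = \int_M \mu^v(\exp(itv)\cd x)\, d\nu(x).
\end{equation*}

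\textbf{Step 2: Pass to the limit $t\to +\infty$.} Proposition \ref{limite} together with the definition \eqref{def-limes} gives $\exp(itv)\cd x \to \limes(x)$ as $t\to +\infty$. By continuity of $\mu^v$, the integrand converges pointwise to $\mu^v(\limes(x))$. Since $\mu^v$ is bounded on the compact manifold $M$, the Dominated Convergence Theorem applies and yields
\begin{equation*}
  \la_\nu(\e(-v)) = \int_M \mu^v(\limes(x))\, d\nu(x).
\end{equation*}

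\textbf{Step 3: Evaluate using the Morse decomposition.} If $x \in W^u_i$, then by \eqref{def-wu} we have $\limes(x) \in C_i = (\mu^v)\meno(c_i)$, so $\mu^v(\limes(x)) = c_i$. Hence
\begin{equation*}
  \mu^v \circ \limes = \sum_{i=0}^r c_i\, \mathbf{1}_{W^u_i}
\end{equation*}
as functions on $M$, where the partition $M = \bigsqcup_{i=0}^r W^u_i$ from \eqref{scompstabile} guarantees that this identity holds everywhere. Integrating against $\nu$ gives $\la_\nu(\e(-v)) = \sum_{i=0}^r c_i \cd \nu(W^u_i)$, as required.

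The only non-trivial point is the interchange of limit and integral in Step 2; the compactness of $M$ (hence boundedness of $\mu^v$) together with the pointwise convergence supplied by Proposition \ref{limite} reduces this to dominated convergence, so no substantial obstacle arises. Everything else is a direct application of formulas already established in the preceding sections.
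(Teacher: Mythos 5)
Your proof is correct and follows essentially the same route as the paper's: rewrite $\la_\nu(\e(-v))$ via \eqref{la-exp} and \eqref{derivata-t} as $\lim_{t\to+\infty}\int_M \mu^v(\exp(itv)\cd x)\,d\nu(x)$, pass the limit inside by dominated convergence using the boundedness of $\mu^v$ and the pointwise convergence from Proposition \ref{limite}, and then evaluate $\mu^v\circ\limes$ on the partition \eqref{scompstabile}. No discrepancies to report.
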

\begin{proof}
  Using the definition of $\la_\nu$, \eqref{defipsim} and
  differentiation under the integral we get
  \begin{gather*}
    \la_\nu (\e (-v)) = \lim_{t\to +\infty} \desudt \PsiM (\nu,
    \exp(itv) ) =\\
    =\lim_{t\to +\infty} \int_M \Bigl ( \desudt \Psim (x, \exp(itv))
    \Bigr ) d\nu(x) .
  \end{gather*}
  By \eqref{derivata-t}
  \begin{gather*}
    \desudt \bigg \vert _{t=t_o} \Psim (x, \exp(it\x ) ) = \mu^\x
    (\exp(it_0 \x ) \cd x).
  \end{gather*}
  Since $\mu^v$ is a bounded we can apply the dominated convergence
  theorem:
  \begin{gather*}
    \la_\nu (\e (-v)) = \lim_{t\to +\infty} \int_M \mu^v (\exp
    (itv)\cd x) d\nu(x) =\\
    = \int_M \Bigl ( \lim_{t\to +\infty} \mu^v (\exp
    (itv)\cd x) \Bigr ) d\nu(x) = \int_M \mu^v (\alfa(x)) d\nu(x) =\\
    = \sum_{i=0}^r \int_{W_i^u} \mu^v (\alfa(x)) d\nu(x) .
  \end{gather*}
  For $x\in W_i^u$, $\alfa(x) \in C_i$, so $\mu^v(\alfa(x)) =
  c_i$. Thus
  \begin{gather*}
    \int_{W_i^u} \mu^v (\alfa(x)) d\nu(x) = c_i \cd \nu(W_i^u).
  \end{gather*}
  This proves the theorem.
\end{proof}

\begin{say}
Let $\convo$ denote the convex hull of $\mu(M) \subset \liek^*$ and
let $\intec $ denote the interior of $\convo$ as a subset of
$\liek^*$.
Remark that  the action of $K$ on $M$
is \emph{almost effective} (i.e. if $v \in \liek$ and $v_M=0$, then
$v=0$) if and only if $\mu(M)$ is full in $\liek^*$ (i.e. it is not contained in any affine hyperplane) if and only if   $\intec$ is non-empty.
\end{say}

\begin{lemma}\label{lemma-convesso}
  If $\nu$ is a probability measure on $M$, then
  $\fun( \nu) \in \convo$.
\end{lemma}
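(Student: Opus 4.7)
The plan is to argue by contradiction using a separation argument (Hahn-Banach in finite dimension). Since $M$ is compact and $\mu$ is continuous, $\mu(M)$ is a compact subset of $\liek^*$, and hence $\convo$ is compact (being the convex hull of a compact set in a finite-dimensional vector space) and in particular closed.

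Suppose that $\fun(\nu) \notin \convo$. By the Hahn-Banach separation theorem applied to the point $\fun(\nu)$ and the compact convex set $\convo$, there exist $v \in \liek$ and $c \in \R$ such that
\begin{gather*}
\sx \xi, v\xs \leq c \quad \text{for all } \xi \in \convo, \qquad \sx \fun(\nu), v\xs > c.
\end{gather*}
In particular $\mu^v(x) = \sx \mu(x), v \xs \leq c$ for every $x \in M$.

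Now integrating against the probability measure $\nu$ and using the definition \eqref{def-momento-misure},
\begin{gather*}
\sx \fun(\nu), v\xs = \int_M \sx \mu(x), v\xs\, d\nu(x) \leq c \cdot \nu(M) = c,
\end{gather*}
which contradicts $\sx \fun(\nu), v\xs > c$. Hence $\fun(\nu) \in \convo$.

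No step should be a real obstacle here: the only things to verify are that $\convo$ is closed (so that strict separation applies) and that the duality pairing commutes with the integral, both of which are routine. The proof is essentially the standard fact that the barycenter of a probability measure supported on a compact set lies in its convex hull.
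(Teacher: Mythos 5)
Your proof is correct. The paper's own proof is a one-line appeal to the fact that $\fun(\nu)$ is the barycenter of the pushforward measure $\mu_*\nu \in \proba(\mu(M))$ and hence lies in the convex hull of $\mu(M)$; your separation argument is exactly the standard proof of that barycenter fact, so the two are essentially the same, with yours simply spelling out the detail the paper leaves implicit.
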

\begin{proof}
  If $\nu\in \pb$, then $\fun(\nu) $ is the center of gravity of the
  measure $\mu_* \nu \in \proba (\mu(M))$. Therefore $\fun(\nu) $ lies
  in the convex hull of $\mu(M)$.
\end{proof}

\begin{say}\label{smooth-measure}
  On a differentiable manifold there is no preferred measure, but
  there is a well defined class of measures: those that in any chart
  have a smooth strictly positive density with respect to the Lebesgue
  measure of the chart.  These are called \emph{smooth strictly
    positive measures} on $M$.  Any two such measures are absolutely
  continuous with respect to one another.
\end{say}

 \begin{defin}\label{def-ac}
   Let $\ac(M)$ denote the set of the probability measures on $M$ that
   are absolutely continuous with respect to one smooth strictly
   positive measure (and hence with respect to any such measure).
 \end{defin}

\begin{defin}
  Let $\W (M, K, \om, \mu)$ (or $\W(M,K)$ for brevity) denote the set
  of probability measures on $M$ that satisfy the following condition:
  for every $v\in S(\liek)$, the open unstable manifold has full
  measure. In the notation of \ref{say-frankel} this means that
  $\nu (W^u_r ) = 1$.
\end{defin}

\begin{say}
  In words $\nu \in \W(M,K)$ if $\nu$ is concentrated on the open
  unstable manifold. Since the other unstable manifolds are
  submanifolds of positive codimension in $M$, it is clear that
  $\ac(M) \subset \W(M,K)$.
\end{say}

An easy consequence of Theorem \ref{calcolo-lambda} is the following
result.
\begin{cor}
  \label{sup}
  If $\mis \in \W(M,K)$, then for any $v\in S( \liek)$
  \begin{gather*}
    \la_\nu(\e (-v))= \max_{x\in M} \mu^v(x).
  \end{gather*}
\end{cor}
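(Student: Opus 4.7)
The plan is to read off the result directly from Theorem \ref{calcolo-lambda} once we exploit the defining property of the class $\W(M,K)$. Theorem \ref{calcolo-lambda} gives
\begin{gather*}
  \la_\nu(\e(-v)) = \sum_{i=0}^r c_i \cdot \nu(W^u_i),
\end{gather*}
so the task reduces to computing the $\nu$-mass of the unstable manifolds.

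First I would observe that by \eqref{scompstabile} the unstable manifolds $W^u_0, \ldots, W^u_r$ form a partition of $M$, hence $\sum_i \nu(W^u_i) = \nu(M) = 1$. The assumption $\nu \in \W(M,K)$ means precisely that $\nu(W^u_r) = 1$; combined with the partition identity this forces $\nu(W^u_i) = 0$ for every $i < r$. Therefore the sum in Theorem \ref{calcolo-lambda} collapses to the single term $c_r \cdot 1 = c_r$.

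It remains to identify $c_r$ with $\max_M \mu^v$. This is the content of the Frankel observation recalled in \ref{say-frankel}: since $\mu^v$ is Morse-Bott with critical submanifolds of even index on a connected compact manifold, every local maximum of $\mu^v$ is a global maximum. The critical values were ordered $c_0 < \cdots < c_r$, so $c_r$ is precisely the maximum of $\mu^v$ on $M$. This gives
\begin{gather*}
  \la_\nu(\e(-v)) = c_r = \max_{x\in M} \mu^v(x),
\end{gather*}
as required.

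No real obstacle is expected: the argument is a direct specialization of Theorem \ref{calcolo-lambda}, with the only non-formal ingredient being the Frankel-type statement already quoted in the paragraph preceding the corollary. The only point worth being careful about is that the $W^u_i$'s are genuinely disjoint, which is clear from the definition \eqref{def-wu} since the limit $\limes(x)$ is unique and each $C_i$ is a distinct critical component.
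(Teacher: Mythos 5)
Your proof is correct and follows exactly the paper's argument: apply Theorem \ref{calcolo-lambda}, use $\nu(W^u_r)=1$ together with the partition \eqref{scompstabile} to kill all other terms, and identify $c_r$ with $\max_M\mu^v$ as the top critical value of the Morse--Bott function. Nothing to add.
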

\begin{proof}
  By Theorem \ref{calcolo-lambda},
  $\la_\nu(\e (-v))=\sum_{i=0}^r c_i \nu (W^v_i)$. Since
  $\nu(W^v_r)=1$, $\nu(W^v_i)=0$ for $i=0,\ldots,r-1$, so
  $\la_\nu(\e (-v))= c_r=\max_{x\in M} \mu^v(x).$
\end{proof}

\begin{teo}
  \label{W-stabile}
  If $\nu \in \W(M,K)$ and $0\in \intec $, then $\nu$ is stable. In
  particular it is polystable, so there is $g\in G$ such that
  $\fun(g\pf\nu)=0$.
\end{teo}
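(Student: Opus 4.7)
The plan is to combine Theorem \ref{stabile} with Corollary \ref{sup}: stability of $\nu$ is equivalent to $\la_\nu > 0$ on $\tits$, and for $\nu \in \W(M,K)$ the maximal weight has the very explicit form $\la_\nu(\e(-v)) = \max_{x \in M} \mu^v(x)$. Hence the whole theorem reduces to a convex-geometry fact about $\mu(M) \subset \liek^*$.

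First, I would observe that since $\e : S(\liek) \to \tits$ is a bijection, it suffices to prove $\la_\nu(\e(-v)) > 0$ for every $v \in S(\liek)$. By Corollary \ref{sup} this amounts to showing
\begin{gather*}
  \max_{x\in M} \mu^v(x) > 0 \qquad \text{for every } v\in S(\liek).
\end{gather*}
This is where the hypothesis $0\in \intec$ enters. Indeed, suppose for contradiction that $\mu^v(x) \leq 0$ for all $x\in M$ and some $v \neq 0$. Then $\mu(M)$ lies in the closed half-space $H_v := \{\xi \in \liek^* : \sx \xi, v\xs \leq 0\}$. Since $H_v$ is convex and closed, the convex hull $\convo$ lies in $H_v$ as well. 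But $0$ lies on the boundary hyperplane of $H_v$ (as $\sx 0, v\xs = 0$), so $0$ cannot be interior to $H_v$, contradicting $0\in \intec$. Hence $\max_M \mu^v > 0$ for every $v\in S(\liek)$.

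Combining these two steps, $\la_\nu > 0$ on $\tits$, so Theorem \ref{stabile} gives that $\nu$ is stable. In particular, by Definition \ref{stabilita}, $\nu$ is polystable, which is precisely the statement that $G\cdot \nu \cap \fun\meno(0) \neq \vacuo$, i.e.\ there exists $g \in G$ such that $\fun(g \cdot \nu) = 0$.

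The argument is really a two-line affair once Corollary \ref{sup} and Theorem \ref{stabile} are in hand; there is no genuine obstacle. The only subtle point is the convex-geometry observation that $0\in \intec$ rules out any supporting half-space through the origin, which is what translates the positivity hypothesis on the momentum image into strict positivity of the maximal weight.
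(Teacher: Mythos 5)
Your proof is correct and follows exactly the same route as the paper's: reduce to $\max_M \mu^v > 0$ via Corollary \ref{sup}, deduce this from $0 \in \intec$ by a supporting half-space argument, and conclude with Theorem \ref{stabile}. The paper merely compresses the convex-geometry step into one sentence ("$\mu^v$ attains both positive and negative values"), which you have usefully spelled out.
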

\begin{proof}
  Since $0 \in \intec $, for each $v\neq 0$ the function $\mu^v$
  attains both positive and negative values. So the maximum of $\mu^v$
  is positive. By the previous corollary $\la_\nu(\e(-v)) >0$. The
  result follows applying Theorem \ref {stabile}.
\end{proof}

\begin{cor}\label{stable-dense}
  If $0\in \intec $, then the set
  $\proba_s(M) :=\{\nu\in \proba (M):\, \nu\ \mathrm{is\ stable} \}$
  is open and dense in $\pb$.
\end{cor}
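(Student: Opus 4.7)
The plan is to treat openness and density separately; both reduce to results already established in this paper.

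For openness, I would simply apply Corollary \ref{stable-open-abstract-setting} to the Kempf-Ness function $\PsiM$ on $\pb$ constructed in Proposition \ref{P5M}: all the abstract hypotheses are in place, and the $G$-stable points in $\pb$ are by definition $\proba_s(M)$, so this set is open in the weak topology.

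For density, the strategy is to exhibit a dense subset of $\pb$ contained in $\proba_s(M)$. The natural candidate is $\ac(M)$. The remark following Definition \ref{def-ac} shows $\ac(M) \subset \W(M,K)$, and Theorem \ref{W-stabile}, together with the standing hypothesis $0\in \intec$, then forces every element of $\ac(M)$ to be stable. Hence it suffices to prove that $\ac(M)$ is weakly dense in $\pb$.

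To prove this density I would regularize by the heat kernel. Fix the Riemannian volume $\dvol$ on $M$ and let $p_t(x,y)$ denote the heat kernel of $(M,g)$. For $\nu \in \pb$ set
\begin{gather*}
\nu_t(A) := \int_M \int_A p_t(x,y)\, d\dvol(y)\, d\nu(x).
\end{gather*}
The density $y\mapsto \int_M p_t(x,y)\, d\nu(x)$ of $\nu_t$ with respect to $\dvol$ is smooth and strictly positive, so $\nu_t \in \ac(M)$. For any $f\in C(M)$, Fubini gives $\nu_t(f) = \int_M (P_t f)(x)\, d\nu(x)$, where $P_t$ is the heat semigroup; since $P_t f\to f$ uniformly on the compact manifold $M$ as $t\to 0^+$, one concludes $\nu_t \weak \nu$. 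A more elementary alternative is to use a partition of unity combined with standard mollifiers in coordinate charts. The mildly technical point in the whole argument is just this regularization, which is a routine piece of analysis; the substantive content of the statement is already packaged inside Theorem \ref{W-stabile} and Corollary \ref{stable-open-abstract-setting}.
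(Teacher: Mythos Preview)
Your proof is correct and follows the same overall strategy as the paper: openness via Corollary \ref{stable-open-abstract-setting}, and density by showing that $\ac(M)\subset\proba_s(M)$ (using Theorem \ref{W-stabile}) and that $\ac(M)$ is weakly dense in $\pb$.

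The only point of divergence is how the density of smooth measures is established. You regularize an arbitrary $\nu$ directly via the heat semigroup, obtaining $\nu_t\in\ac(M)$ with $\nu_t\weak\nu$. The paper instead argues indirectly: it first approximates each Dirac measure $\delta_y$ by smooth measures, then invokes the Krein--Milman theorem to conclude that finite convex combinations of Diracs are dense in $\pb$, and finally combines these two facts. Your approach is more constructive and self-contained (and arguably cleaner, since it avoids the detour through extreme points); the paper's approach trades the heat-kernel machinery for a purely functional-analytic argument. Either way the step is routine, and as you note the substance of the corollary is already in Theorem \ref{W-stabile} and Corollary \ref{stable-open-abstract-setting}.
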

\begin{proof}
  By Corollary \ref{stable-open-abstract-setting} the set
  $\proba_s(M)$ is open. By Theorem \ref{W-stabile} any smooth measure
  is stable. Hence it is enough to prove that smooth measures are
  dense. It is easy to check that for any Dirac measure $\delta_y$
  there exists a sequence of smooth measures $\nu_n$ such that
  $\nu_n \weak \delta_y$. Hence any convex combination of Dirac
  measures is the weak limit of a sequence of smooth measures. Since
  $\proba(M)$ is a compact convex set in $\misu(M)$ (endowed with the
  weak topology) and its extremal points are exactly the Dirac
  measures, the Krein-Milman Theorem \cite[p. 440]{dunford-schwartz-1}
  implies that convex combinations of Dirac measures are
  dense. Therefore also smooth probability measures are dense and so
  $\proba_s(M)$ is dense in $\pb$.  (See Lemma 3.6 in \cite[p. 316]
  {kapovich-leeb-millson-convex-JDG} for a similar argument.)
\end{proof}

\begin{say}
  \label{puzzola}
  We point out that for an almost effective action, up to shifting the
  momentum mapping $\mu$, the condition $0\in \intec $ is always
  satisfied. This is the content of the following lemma.  It implies
  that $0 \in \into$ when $K$ is semisimple and the action is almost
  effective.  In the following we fix an $\Ad$--invariant scalar
  product on $\liek$ and we think of the momentum mapping as a
  $\liek$-valued map.
\end{say}

\begin{lemma}
  Let $(M, \om)$, $K$ and $\mu$ be as in \ref{notazione-varieta}.
  Assume that the action is almost effective.  Then
  $ \intec \cap \liez (\liek) \neq \vacuo$.
\end{lemma}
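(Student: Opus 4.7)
The plan is to exploit the standard orthogonal decomposition $\liek = \liez(\liek) \oplus [\liek,\liek]$, which holds for any compact Lie algebra and is orthogonal with respect to any $\Ad$--invariant scalar product (since $\sx [v,w], z\xs = \sx v, [w,z]\xs = 0$ whenever $z \in \liez(\liek)$). Using the chosen $\Ad$--invariant scalar product to identify $\liek^* \cong \liek$, let $\pi : \liek \ra \liez(\liek)$ be the orthogonal projection. Since $K$ is compact and connected, $\pi$ coincides with the averaging operator
\begin{gather*}
  \pi(c) = \int_K \Ad(k)\, c \, dk,
\end{gather*}
because both operators are the projection onto the subspace of $\Ad(K)$--fixed vectors, which for connected $K$ is exactly $\liez(\liek)$.

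Next I would use the hypothesis that the action is almost effective: by the remark preceding the statement, this is equivalent to $\intec \neq \vacuo$, so one can fix some $c \in \intec$. By $K$--equivariance of $\mu$, the set $\mu(M)$ is $\Ad(K)$--invariant, and therefore so are its convex hull $\convo$ and the interior $\intec$. In particular the continuous map $K \ra \intec$, $k \mapsto \Ad(k) c$, takes values in the open convex set $\intec$.

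The key step will be to show that this forces $\pi(c) = \int_K \Ad(k) c \, dk \in \intec$. This is the general fact that the Haar average of a continuous map from a compact space into an open convex subset $U$ of a finite-dimensional vector space lies in $U$: for every supporting hyperplane of $\overline{U}$, given by an affine functional $\phi$ with $\phi > 0$ on $U$, the image $\phi \circ (\Ad(\cd)c)$ is continuous and strictly positive on the compact group $K$, hence its integral is strictly positive; as $U$ equals the intersection of the open half-spaces defined by all such supporting hyperplanes, the average lies in $U$. Applied to $U = \intec$ this gives $\pi(c) \in \intec$, while by construction $\pi(c) \in \liez(\liek)$, so $\pi(c) \in \intec \cap \liez(\liek)$.

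The only nontrivial point is the averaging lemma in the last paragraph, but this is a standard convex-geometric fact; no use is made of the K\"ahler structure beyond what has already been invoked via equivariance of $\mu$ and the openness of $\intec$.
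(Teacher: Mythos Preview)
Your proof is correct and takes a genuinely different route from the paper's. The paper fixes a maximal torus $T\subset K$, projects to $\liet$ to get the momentum polytope $P$, and takes the centroid $b$ of the vertices of $P$. It then argues by contradiction that $b\in\inte P$, invokes an external result (the face correspondence between $\convo$ and $P$ from \cite{bgh-israel-p}) to lift this to $b\in\intec$, and finally uses Weyl group invariance of $P$ to conclude $b\in\liez(\liek)$.

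Your argument is considerably more economical: you simply pick any $c\in\intec$ and average over $K$. This avoids the maximal torus, the polytope, the external citation, and the Weyl group altogether; the only substantive input is that the barycenter of a compact-group-valued map into an open convex set lies in that set, which (as you note) is elementary convex geometry. The paper's approach has the minor advantage of producing an explicit, canonical point, whereas yours produces $\pi(c)$ for an arbitrary initial $c$; but for the purpose of the lemma this is irrelevant. One small remark: your sentence ``$U$ equals the intersection of the open half-spaces defined by all such supporting hyperplanes'' is slightly more than you need and would require a word of justification; it is cleaner to argue directly that if the average were outside $U$ one could separate it from $U$ by a single supporting functional $\phi$ with $\phi>0$ on $U$, and then integrate to reach a contradiction. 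Equivalently, observe that the average lies in the compact convex hull of the image $\{\Ad(k)c:k\in K\}$, which is already contained in $U$.
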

\begin{proof}
  Fix a maximal torus $T\subset K$ and let $\liet$ be its Lie algebra
  and $\pi: \liek \ra \liet$ the orthogonal projection. Then
  $P:=\pi (\mu(M)) $ is the momentum polytope for the $T$-action. Let
  $\{v_1 , \lds, v_q\}$ be the set of vertices of $P$. Then
  $b:=(v_1 + \cds + v_q) /q$ is the centroid of $P$
  \cite[p. 60]{berger-geometry-1}.  We claim that $b \in \inte P$.  In
  fact, if $b$ were a boundary point, there would be a a proper face
  containing $b$. Since any face of a polytope is exposed
  \cite{schneider-convex-bodies}, there would exist a nonzero vector
  $u\in \liek$ such that $\sx b,u \xs = c$, where
  $c:=\max_{x\in P} \sx x, u\xs$.  So $b$ would belong to the face
  $F_u(P)=\{y \in P:\,\langle x, u \rangle=c \}$. After reordering the
  vertices we can assume that $\sx v_j , u \xs = c $ if
  $1\leq j \leq p$ and $\sx v_j , u \xs < c$ if $p < j \leq q$.  This
  means that $v_1 , \lds, v_p $ are the extremal points of
  $F_u(P)$. So there would exist $\la_i\in [0,1]$ such that
  $b = \la_1 v_1 + \cds + \la_p v_p$.  Therefore
  \begin{gather*}
    \sum_{i=1}^ p \bigl ( \la_i - \frac{1}{q} \bigr ) v_i = \sum_{i>p}
    \frac{1}{q} v_i.
  \end{gather*}
  But the left hand side lies in the hyperplane
  $ \{ x\in \liet: \sx x, u\xs =c \}$, while the right hand side lies
  in the half-space $ \{ x\in \liet: \sx x, u\xs <c \}$. Thus we have
  $p=q$ and $P=F_u(P)$. But this is absurd since $F_u (P)$ is a proper
  face. Thus we have proved that $b \in \inte P$.  By the main Theorem
  in \cite{bgh-israel-p} the faces of $\convo$ correspond to the faces
  of $P$. Hence the interior of $\convo$ corresponds to the interior
  of $P$, that is $K\cd \inte P = \intec $. So $b\in \intec$.  Finally
  $\mu(M)$ is $K$-invariant, so $P=\pi(\mu(M))$ is invariant for the
  Weyl group $\Weyl = \Weyl(K,T)$. Therefore also $b$ is fixed by
  $\Weyl$.  This proves that $b$ lies in $\liez(\liek)$.
\end{proof}

    \begin{cor}
      Let $(M, \om)$ be a compact K\"ahler manifold and let $K$ be a
      compact group acting on $M$ almost effectively and in
      Hamiltonian fashion with momentum mapping $\mu: M \ra \liek^*$.
      If $\nu \in \ac(M) $, then $G_\nu$ is compact.  If
      $\nu \in \ac(M) $ is $K$--invariant, then $G_\nu=K$.
    \end{cor}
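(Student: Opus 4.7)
The plan is to reduce the corollary to Theorem \ref{W-stabile} after a suitable shift of the momentum mapping. First I would invoke the lemma in \ref{puzzola}: since the $K$-action is almost effective, there exists $c \in \intec \cap \liez(\liek)$. Replacing $\mu$ by $\mu - c$ yields a new $K$-equivariant momentum mapping (because $c \in \liez(\liek)$ is $K$-fixed) whose convex hull has $0$ in its interior. This operation does not alter the $G$-action on $\pb$, and in particular leaves $G_\nu$ unchanged, so I may henceforth assume $0 \in \intec$.

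Since $\ac(M) \subset \W(M,K)$, Theorem \ref{W-stabile} shows that $\nu$ is stable, and Corollary \ref{stabcomp} then gives that $G_\nu$ is compact. This proves the first assertion.

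For the second part, suppose in addition that $\nu$ is $K$-invariant, so that $K \subset G_\nu$. By the first part $G_\nu$ is a compact subgroup of the complex reductive group $G = K^\C$. Every compact subgroup of such a group is contained in a maximal compact subgroup, and the maximal compact subgroups of $K^\C$ are all $G$-conjugate to $K$. Consequently there exists $g \in G$ with $G_\nu \subset gKg\meno$. Conjugating the inclusion $K \subset G_\nu$ then gives $g\meno K g \subset g\meno G_\nu g \subset K$; since $g\meno K g$ and $K$ are closed connected subgroups of $K$ of the same dimension, they must coincide, so $g \in N_G(K)$. Hence $G_\nu \subset gKg\meno = K$, and combined with $K \subset G_\nu$ this yields $G_\nu = K$.

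I do not foresee any real difficulty. The only step worth flagging is the classical fact that a closed connected subgroup of a connected Lie group of the same dimension coincides with it, which is what I would use to conclude $g\meno K g = K$.
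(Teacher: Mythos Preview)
Your proof is correct and the first assertion follows the paper verbatim: shift $\mu$ by a central element so that $0\in\intec$, use $\ac(M)\subset\W(M,K)$, apply Theorem~\ref{W-stabile} and Corollary~\ref{stabcomp}.

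For the second assertion your conjugation argument is valid but unnecessarily elaborate. The paper dispatches it in one line: once you know $G_\nu$ is compact and contains $K$, the fact that $K$ is a \emph{maximal} compact subgroup of $G=K^\C$ forces $G_\nu=K$ directly, by the very definition of maximality. There is no need to embed $G_\nu$ into some conjugate $gKg\meno$ and then argue that $g\in N_G(K)$; your route essentially reproves that $K$ is maximal among compact subgroups of $G$.
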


    \begin{proof}
      By the above lemma, up to shifting $\mu$ by an element of the
      center we can assume that $0\in \intec$.  By Theorem
      \ref{W-stabile} $\nu$ is stable.  By Corollary \ref{stabcomp}
      $G_\nu$ is compact. If $\nu$ is $K$--invariant, then
      $K\subset G_\nu$. Since $K$ is a maximal compact subgroup of
      $G$, we get $G_\nu=K$.
    \end{proof}

    \section{The construction of Hersch and Bourguignon-Li-Yau}
    \label{sec:bly}

\begin{say}
  In various situations it is interesting to know how the map $\fun$
  behaves on the orbit $G\cd \nu$, where $\nu \in \pb$.  Set
  \begin{gather}
    \label{def-bly}
    \bly_\nu : G \lra \liek^*, \quad \bly_\nu(a): = \fun (a\cd \nu).
  \end{gather}
  This map was used for the first time by Hersch \cite{hersch}, in the
  case $M=S^2$, to get upper bounds for $\la_1$. For the same purpose
  it was generalized by Bourguignon, Li and Yau
  \cite{bourguignon-li-yau} to the case $M=\mathbb{P}^n(\C)$. We
  further generalized it to arbitrary flag manifolds in
  \cite{biliotti-ghigi-American}.  In this section we prove rather
  general theorems that extend the results in these papers to actions
  on arbitrary K\"ahler manifolds.

  Recall from \ref{notazione-varieta} that $\mu$ is the momentum
  mapping with respect to the symplectic form $\om$ and that $g$ is
  the K\"ahler metric corresponding to $\om$.  For $X,Y\in \XX(M)$ set
  \begin{gather*}
    (X,Y)_{L^2(g, \nu)} := \int_M g(X(x), Y(x)) d\nu(x).
  \end{gather*}
  In all this section we assume that the action of $K$ is almost
  effective.
\end{say}

    \begin{lemma}
      If $v, w\in \liek$ and $\nu \in \proba(M)$, then
      \begin{gather*}
        \desudtzero \sx \fun( \exp(itv )\cd \nu) , w \xs = ( v_M,
        w_M)_{L^2(g,\nu)}.
      \end{gather*}
    \end{lemma}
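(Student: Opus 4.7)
The plan is to pass the derivative inside the integral and then compute pointwise using the defining relation between $\mu$, $\om$ and $g$.

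First I would rewrite the left-hand side using the change of variables formula \eqref{eq:pushforward} together with the definition \eqref{def-momento-misure} of $\fun$:
\begin{gather*}
\sx \fun(\exp(itv) \cd \nu), w \xs = \int_M \mu^w(x)\, d(\exp(itv)\pf\nu)(x) = \int_M \mu^w(\exp(itv)\cd x)\, d\nu(x).
\end{gather*}
Since $M$ is compact, the function $(t,x) \mapsto \mu^w(\exp(itv)\cd x)$ is smooth on $\R \times M$, and in particular its $t$-derivative is uniformly bounded on $[-1,1]\times M$. Hence one may differentiate under the integral sign at $t=0$.

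Next I would evaluate the pointwise derivative. Recall from \ref{notazione-flusso} that $\grad \mu^v = J v_M = (iv)_M$, so the flow of $(iv)_M$ is $t \mapsto \exp(itv)$. Therefore
\begin{gather*}
\desudtzero \mu^w(\exp(itv)\cd x) = d\mu^w_x\bigl((iv)_M(x)\bigr) = d\mu^w_x\bigl(Jv_M(x)\bigr).
\end{gather*}
Using the momentum mapping identity $d\mu^w = i_{w_M}\om$ this equals $\om_x(w_M(x), Jv_M(x))$. The \keler compatibility $\om(\cd,J\cd) = g(\cd,\cd)$ (together with $\om(X,Y)=-\om(Y,X)$ and $J$-invariance of $g$) gives
\begin{gather*}
\om_x(w_M(x), Jv_M(x)) = g_x(v_M(x), w_M(x)).
\end{gather*}

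Putting the three steps together yields
\begin{gather*}
\desudtzero \sx \fun(\exp(itv)\cd \nu), w\xs = \int_M g_x(v_M(x), w_M(x))\, d\nu(x) = (v_M, w_M)_{L^2(g,\nu)},
\end{gather*}
as claimed. There is no genuine obstacle here; the only point requiring minor care is the exchange of derivative and integral, which is routine thanks to compactness of $M$ and smoothness of $\mu^w$ and the action.
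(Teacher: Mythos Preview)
Your proof is correct and follows essentially the same route as the paper's: rewrite $\sx \fun(\exp(itv)\cd\nu),w\xs$ via change of variables, differentiate under the integral, and then use $d\mu^w=i_{w_M}\om$ together with $(iv)_M=Jv_M$ and the K\"ahler identity to reach $g(v_M,w_M)$. The only difference is that you spell out the justification for exchanging derivative and integral, which the paper leaves implicit.
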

    \begin{proof}
      \begin{gather*}
        \desudtzero \sx \fun( \exp(itv) \cd \nu) , w \xs =
        \desudtzero \sx \int_M \mu(y)\,  d(\exp(itv) \cd \nu)(y) , w \xs =\\
        = \desudtzero \int_M \mu^w(y)\, d(\exp(itv) \cd \nu)(y)
        = \desudtzero \int_M \mu^w(\exp(itv)\cd x) \, d\nu(x)= \\
        = \int_M i_{w_M} \om (Jv_M) \, d\nu =\int_M g ( w_M, v_M) d\nu
        = (v_M, w_M)_{L^2(g,\nu)}.
      \end{gather*}
    \end{proof}

    \begin{teo}
      \label{maxirank}
      If $\nu \in \proba(M)$ and $G_\nu$ is compact, then the map
      $F_\nu: G \ra \liek^*$ defined in \eqref{def-bly} is a smooth
      submersion and its image is contained in $ \intec $.
    \end{teo}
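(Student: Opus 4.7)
The plan is to establish in turn: smoothness of $F_\nu$, surjectivity of its differential at every point of $G$, and the containment $F_\nu(G)\subset\intec$. Smoothness is immediate: the change of variables formula \eqref{eq:pushforward} gives
\begin{gather*}
F_\nu(a)=\int_M\mu(ax)\,d\nu(x),
\end{gather*}
and since $M$ is compact and the $G$-action on $M$ is smooth, the integrand together with all its derivatives in $a$ is uniformly bounded in $x$, so differentiation under the integral sign yields that $F_\nu$ is smooth on $G$.

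For the submersion property, I would first reduce the verification at an arbitrary $a\in G$ to the identity. Associativity of the action gives
\begin{gather*}
F_\nu(ga)=\fun\bigl(g\cd(a\cd\nu)\bigr)=F_{a\cd\nu}(g),
\end{gather*}
so $F_\nu\circ R_a=F_{a\cd\nu}$, where $R_a$ denotes right translation. Since $(dR_a)_e$ is an isomorphism and the stabilizer $G_{a\cd\nu}=aG_\nu a\meno$ is compact exactly when $G_\nu$ is, it suffices to prove $(dF_\nu)_e$ is surjective under the hypothesis that $G_\nu$ is compact.

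At the identity, I restrict the differential to directions $iv\in i\liek\subset\lieg$. The lemma preceding the theorem gives
\begin{gather*}
\sx (dF_\nu)_e(iv),w\xs=(v_M,w_M)_{L^2(g,\nu)},\qquad v,w\in\liek,
\end{gather*}
so the restriction of $(dF_\nu)_e$ to $i\liek$ is surjective onto $\liek^*$ if and only if the positive semidefinite form $B(v,w):=(v_M,w_M)_{L^2(g,\nu)}$ is nondegenerate on $\liek$. If $B(v,v)=0$ then $v_M=0$ $\nu$-almost everywhere, so Corollary \ref{plurr} yields $\exp(\C v)\subset G_\nu$ and in particular $iv\in\lieg_\nu$. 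Now compactness of $G_\nu$ implies that $G_\nu$ is conjugate to a subgroup of the maximal compact $K$, hence $\lieg_\nu$ is conjugate to a subalgebra of $\liek$, and Lemma \ref{intersezione-nulla} forces $\lieg_\nu\cap i\liek=\{0\}$, whence $v=0$. This linking of the kernel of the $L^2$ form to the stabilizer algebra via Corollary \ref{plurr}, followed by the compactness argument, is the crux of the proof.

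Finally, since $F_\nu$ is a submersion, $F_\nu(G)$ is an open subset of $\liek^*$. By Lemma \ref{lemma-convesso} it is contained in the convex set $\convo$, and almost effectiveness of the $K$-action (standing assumption of this section) guarantees that $\convo$ has nonempty interior $\intec$. An open subset of $\liek^*$ contained in $\convo$ must lie in $\intec$, so $F_\nu(G)\subset\intec$, concluding the proof.
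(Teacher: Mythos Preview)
Your proof is correct and follows essentially the same route as the paper's. The only cosmetic differences are that you first reduce to $a=e$ via $F_\nu\circ R_a=F_{a\cd\nu}$ whereas the paper computes directly at $a$ using $\tnu=a\cd\nu$, and that you invoke Lemma \ref{intersezione-nulla} explicitly to conclude $v=0$ from $\exp(\C v)\subset G_\nu$, whereas the paper simply remarks that compactness of $G_{\tnu}$ forces $v=0$ (since $\exp(i\R v)$ is unbounded for $v\neq 0$).
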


\begin{proof}
  If $a\in G$ and $v \in \liek$, consider the curve
  $\alf(t):= \exp(itv) a$. Set $\tnu:= a\cd \nu$. Then for any
  $w\in \liek$
  \begin{gather*}
    \sx d F_\nu(\dalf(0)), w\xs = \desudtzero \sx \fun (\exp(itv)\cd
    \tnu ) , w \xs = (v_M, w_M) _{L^2(g, \tnu)}.
  \end{gather*}
  If $dF_v(\dalf(0)) =0$, choose $w=v$. Then
  $||v_M ||_{L^2(g, \tnu)}=0$, i.e. $v_M=0$ $\tnu$--a.e.
  By Corollary \ref{plurr} $\exp(\C v) \subset G_{\tnu}$.  Since
  $G_{\tnu}$ is compact, $v =0$.  This proves that $dF_\nu$ is
  injective on the subspace $dR_a(e) (i\liek) \subset T_aG$. By
  dimension reasons $T_aG = \ker dF_\nu \oplus dR_a(e) (i\liek)$ and
  $dF_\nu$ is onto. Therefore $F_\nu(G)$ is an open subset of
  $\liek^*$. Since it is contained in $\convo$ we have
  $F_\nu(G) \subset \intec$.
\end{proof}
\begin{say}
  The first assertion of the previous theorem is analogous to a fact
  that is well-known in the classical theory of Hamiltonian actions on
  a K\"ahler manifold: if the stabilizer of a point is compact, the
  restriction of the momentum mapping to its orbit is a submersion.
  See e.g. \cite[Prop. 6.1]{heinzner-schwarz-Cartan}.
\end{say}
\begin{lemma}
  If $\mis \in \proba(M)$ and $G_\nu$ is compact, then the function
  $\psim_\nu$, defined in \eqref{defipsima}, is strictly convex.
\end{lemma}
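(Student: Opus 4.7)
The plan is to establish that along any non-constant geodesic $\alpha(t) = g\exp(itv)K$ in $X$ (so $v \in \liek \setminus \{0\}$), the composition $\psim_\nu \circ \alpha$ is strictly convex. By \eqref{defipsima} and differentiation under the integral sign,
\[
\frac{\mathrm{d^2}}{\mathrm{dt^2}} \psim_\nu(\alpha(t)) = \int_M \frac{\mathrm{d^2}}{\mathrm{dt^2}} \psim_x(\alpha(t)) \, d\nu(x),
\]
and each integrand is non-negative by Lemma \ref{convstab}. This already gives geodesic convexity, so it remains to rule out the possibility that this second derivative vanishes at some $t_0 \in \R$.

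Suppose for contradiction that it does, and set $h := g\exp(it_0 v)$. The computation carried out in the proof of Lemma \ref{convstab} rewrites the integrand at $t_0$ as
\[
\frac{\mathrm{d^2}}{\mathrm{ds^2}} \bigg\vert_{s=0} \Psim(h^{-1}x, \exp(-isv)),
\]
which by property \ref{P3} is non-negative and vanishes precisely when $\exp(\C v) \subset G_{h^{-1}x}$, i.e.\ when $v_M(h^{-1}x) = 0$. Since a non-negative integrable function with zero integral vanishes $\nu$-almost everywhere, $v_M(h^{-1}x)=0$ for $\nu$-almost every $x$; equivalently, $v_M$ vanishes $\tilde\nu$-almost everywhere, where $\tilde\nu := h^{-1}\cd\nu$.

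Now I invoke Corollary \ref{plurr} applied to $\tilde\nu$: this yields $\exp(\C v) \subset G_{\tilde\nu} = h^{-1}G_\nu h$, which is compact as a conjugate of $G_\nu$. On the other hand, the diffeomorphism $K \times i\liek \to G$, $(k,\x) \mapsto k\exp(\x)$ (see \ref{def-compatibile}), identifies $\exp(itv)$ with the element $(e, itv)$, so $t \mapsto \exp(itv)$ is an unbounded curve in $G$ whenever $v \neq 0$. In particular no compact subgroup of $G$ can contain it. This forces $v = 0$, contradicting the assumption that $\alpha$ is non-constant, and completes the proof.

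The only slightly delicate point is the bookkeeping of the translation by $h$: the pointwise vanishing criterion from \ref{P3} is phrased in terms of the stabilizer $G_{h^{-1}x}$, so the correct measure on which $v_M$ vanishes almost everywhere is $h^{-1}\cd\nu$ rather than $\nu$ itself, and Corollary \ref{plurr} must be applied to that translated measure. Once this is set up, compactness of all conjugates of $G_\nu$ together with the Cartan-type decomposition of $G$ closes the argument with no further work.
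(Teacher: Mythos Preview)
Your proof is correct and follows essentially the same approach as the paper's: integrate the pointwise convexity from Lemma~\ref{convstab}, use \ref{P3} to force $v_M$ to vanish almost everywhere, and then invoke Corollary~\ref{plurr} together with compactness of the stabilizer to conclude $v=0$. The only difference is organizational: the paper first reduces via the cocycle condition to geodesics through the origin (so $h=\exp(it_0 v)$ and the translated measure lies in the $\exp(i\R v)$-orbit of $\nu$, which makes the zero set of $v_M$ invariant and lets one conclude $v_M=0$ $\nu$-a.e.\ directly), whereas you handle a general geodesic and track the conjugation by $h$ explicitly.
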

\begin{proof}
  Convexity is proven in Lemma \ref{convstab}. We need to check strict
  convexity.  Using the cocycle condition we can restrict to geodesics
  passing through $o=K \in X$.  Let $\alfa(t) = \exp(itv)K $ be such a
  geodesic.  If
  \begin{gather*}
    0= \frac{\mathrm{ d} ^2 } {\mathrm {dt}^2} \bigg |_{t=t_0}
    \psiM_\nu \circ \alfa (t)= \int_M \frac{\mathrm{d}^2}{\mathrm
      {dt}^2} \psim_x(\alfa(t)) d\mis(x),
  \end{gather*}
  then
  \begin{gather*}
    \frac{\mathrm{d}^2}{\mathrm {dt}^2} \psim_x(\alfa(t)) = 0 \quad
    \mis-\text{a.e. in }M.
  \end{gather*}
  By \ref{P3} this implies that $v_M = 0$ $\mis$-a.e., so
  $\exp(\C v) \subset G_\nu$. Since $G_\nu$ is compact, $v=0$. This
  proves that $\psiM_\mis$ is strictly convex along geodesics of $X$
  that pass through the origin $o=K$. The usual argument with the
  cocycle condition yields strict convexity along any geodesic of $X$.
\end{proof}

\begin{say}
  Assume now that $K=T$ is a compact torus, so $G=T^\C$.  Set
  \begin{gather*}
    \tf_\nu: \liet \ra \liet^*, \qquad \tf_\nu(v) : = F_\nu(\exp(iv)).
  \end{gather*}
  Denote by $P$ the momentum polytope i.e. the image of
  $\mu : M \ra \liet^*$. By the Atiyah-Guillemin-Sternberg convexity
  theorem \cite{atiyah-commuting,guillemin-sternberg-convexity-1} $P$
  is a polytope.
\end{say}

\begin{prop} If $\nu \in \W(M, T, \om, \mu)$, then $\tf_\nu$ is a
  diffeomorphism of $\liet$ onto the interior of the momentum
  polytope.
\end{prop}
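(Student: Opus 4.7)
The plan is to realize $\tf_\nu$ as the gradient of a strictly convex smooth function on $\liet$ and then establish injectivity, the local diffeomorphism property, and properness onto $\inte P$. I start by showing that $G_\nu$ is compact. Since $G = T^\C$, this amounts to checking $\lieg_\nu \cap i\liet = \{0\}$. If there were $v \in \liet$ with $v \neq 0$ and $\exp(\C v) \subset G_\nu$, then $\nu$ would be invariant under the gradient flow $\phi_t = \exp(-itv)$ of $\mu^v$; passing to the limit $t \to -\infty$ via Proposition \ref{limite} and the dominated convergence theorem, $\nu = \limes_* \nu$ would be supported on $\Crit(\mu^v)$. Combined with $\nu(W^u_r(v)) = 1$ from the definition of $\W(M,T)$, this forces $\supp \nu \subset C_r(v)$; applying the same argument with $-v$ in place of $v$ gives $\supp \nu \subset C_r(-v) = C_0(v)$. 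Almost effectiveness and $v \neq 0$ imply that $\mu^v$ is nonconstant, so $C_0(v) \cap C_r(v) = \vacuo$, a contradiction. Theorem \ref{maxirank} then applies and $F_\nu$ is a submersion with image in $\intec = \inte P$.

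Next, define $\Phi : \liet \to \R$ by $\Phi(v) := \PsiM(\nu, \exp(iv))$. The commutativity of $T^\C$ and the cocycle condition \ref{P4} give $\Phi(v+w) = \Phi(v) + \PsiM(\exp(iv) \cd \nu, \exp(iw))$ for all $v, w \in \liet$; differentiating at $w = 0$ yields $\nabla \Phi(v) = \tf_\nu(v)$. Under the identification $\liet \to X = T^\C/T$, $v \mapsto \exp(-iv) K$, the function $\Phi$ coincides with $\psiM_\nu$, which by the preceding lemma is strictly convex (since $G_\nu$ is compact by the previous step). Consequently $\tf_\nu = \nabla \Phi$ is injective, and its derivative --- the Hessian of $\Phi$ --- is everywhere positive definite, so $\tf_\nu$ is an injective local diffeomorphism into $\inte P$.

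The main obstacle is proving that $\tf_\nu$ is proper as a map $\liet \to \inte P$; this is where $\nu \in \W(M,T)$ enters essentially via Corollary \ref{sup}: for every $u \in S(\liet)$,
\begin{equation*}
  \lim_{t \to +\infty} \sx \tf_\nu(tu), u \xs = \la_\nu(\e(-u)) = \max_M \mu^u = h_P(u),
\end{equation*}
where $h_P$ is the support function of $P$. Suppose $v_n \to \infty$ in $\liet$; after passing to a subsequence, write $v_n = t_n u_n$ with $t_n \to \infty$ and $u_n \to u \in S(\liet)$. Convexity of $\Phi$ makes $t \mapsto \sx \tf_\nu(tu_n), u_n\xs$ monotonically increasing in $t$, so for any fixed $T > 0$ and $t_n \geq T$ one has $\sx \tf_\nu(v_n), u_n \xs \geq \sx \tf_\nu(Tu_n), u_n\xs$; letting $n \to \infty$ by continuity of $\tf_\nu$ and then $T \to \infty$ gives $\liminf_n \sx \tf_\nu(v_n), u_n\xs \geq h_P(u)$. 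Since $\tf_\nu(v_n) \in P$ by Lemma \ref{lemma-convesso} and $h_P$ is continuous, the reverse bound $\limsup_n \sx \tf_\nu(v_n), u_n\xs \leq h_P(u)$ is automatic. Any accumulation point $p$ of $\{\tf_\nu(v_n)\}$ therefore satisfies $\sx p, u\xs = h_P(u)$, putting $p$ on $\partial P$. This establishes properness, so $\tf_\nu(\liet)$ is closed in $\inte P$; being also open, nonempty, and contained in the connected set $\inte P$, it equals $\inte P$. A proper injective local diffeomorphism that is surjective is a diffeomorphism, concluding the argument.
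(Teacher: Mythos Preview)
Your proof is correct and follows the same essential strategy as the paper: identify $\tf_\nu$ with the differential of a strictly convex function on $\liet$ and use Corollary~\ref{sup} to control the boundary behaviour. The packaging differs in two places. First, the paper quotes a convex-analysis fact (the differential of a smooth strictly convex function is a diffeomorphism onto an open \emph{convex} set $U$) and then argues by contradiction that $U=\inte P$ via a supporting hyperplane at a point of $\partial U\cap\inte P$; you instead prove properness directly by showing that any accumulation point of $\tf_\nu(v_n)$ with $|v_n|\to\infty$ lies on $\partial P$. Both routes hinge on the identity $\lim_{t\to\infty}\langle\tf_\nu(tu),u\rangle=h_P(u)$ coming from Corollary~\ref{sup}, so they are really the same argument seen from opposite ends. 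Second, you explicitly verify the hypothesis needed for strict convexity of $\Phi$, which the paper simply asserts. One small wrinkle: your claim that ``$G_\nu$ compact amounts to $\lieg_\nu\cap i\liet=\{0\}$'' is not obvious as stated for an arbitrary closed subgroup of $T^\C$, and you do not actually need it---what you prove (no $v\neq 0$ with $\exp(\C v)\subset G_\nu$) is precisely the condition that makes the Hessian of $\Phi$ positive definite via the proof of the preceding lemma, and this already gives the local diffeomorphism property, so the appeal to Theorem~\ref{maxirank} is superfluous. (Compactness of $G_\nu$ is nonetheless true: shift $\mu$ so that $0\in\inte P$, whence $\nu$ is stable by Theorem~\ref{W-stabile} and $G_\nu$ is compact by Corollary~\ref{stabcomp}; the stabilizer does not see the shift.)
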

\begin{proof}
  Let $\pi : \liet \ra X:=T^\C/T$ be the diffeomorphism
  $\pi (v):= \exp(-iv)T $.  Since $T$ is abelian, the geodesics of $X$
  are images under the map $\pi$ of affine lines in $\liet$.  Since
  $\psiM_\nu$ is strictly convex on $X$, the function
  $f := \psiM_\nu \circ \pi: \liet \ra \R $, is strictly convex on
  $\liet$.  Note that
  \begin{gather*}
    f(v) =\int_M \psim_x (\exp(-iv)T)d\nu(x) .
  \end{gather*}
  Using the commutativity of $T$, the cocycle condition
  \eqref{cociclo-psi} and \eqref{caso-mundet} we have
  \begin{gather*}
    \desudtzero{} \kn_x(\exp (-i(w+tv))) = \desudtzero{} \kn_x(\exp
    (-iw) \exp(-itv)T)  \\
    = \desudtzero{} \biggl ( \kn_{\exp(iw)x}(\exp (-itv)T) +
    \kn_x(\exp(-iw)T) \biggr)= \sx \mu (\exp(iw)\cd x), v\xs
  \end{gather*}
  \begin{gather*}
    df(w) v = \int_M \desudtzero \kn_x(\exp (-i(w+tv))T) d\mu(x) = \\
    = \sx F_\mis(\exp(iw)), v\xs = \sx \tf_\mis (w), v\xs.
  \end{gather*}
  Hence $df(w) = \tf_\mis(w)$.  Since $f$ is strictly convex, a basic
  result in convex analysis \cite[p. 122]{guglielmino-torico} ensures
  that $df : \liet \ra \liet^*, x\mapsto df(x)$ is a diffeomorphism
  onto an open convex subset $U$ of $ \liet^*$.  Therefore
  $ \tf_\nu (\liet) = df(\liet) = U$ is an open convex subset of
  $\liet^*$ and $\tf_\nu : \liet \ra U$ is a diffeomorphism.  By Lemma
  \ref{lemma-convesso} $U\subset P$.  Since $U$ is an open subset of
  $\liet$ we have $ U \subset \inte P$.  We need to show that
  $U = \inte P$.  Assume by contradiction that $U \subsetneq \inte P$.
  Since both $U$ and $\inte P$ are convex, we get
  $\overline{U} \subsetneq P$. Fix $x_0 \in P \setminus \overline{U}$
  and $x_1 \in U$. Set $x_t : = (1-t) x_0 +t x_1$ and
  $\tau : = \inf \{t\in [0,1]: x_t \in \overline{ U}\}$.  Since
  $\overline{U}$ is closed $x_\tau\in \overline{U}$ and
  $\tau \in(0,1)$.  Moreover $x_\tau \in \partial U$. Since
  $x_1 \in U \subset \inte P$ and $\tau >0$, it follows that
  $x_\tau \in \inte P$. So $y:=x_\tau \in \partial U \cap \inte P$.
  Any boundary point of a compact convex set lies in some exposed
  face, i.e.  it admits a support hyperplane
  \cite{schneider-convex-bodies}. So there is $v\in \liet$, $v\neq 0$,
  such that
  \begin{gather*}
    \sx y, v \xs = \max_{\overline{U} } \sx \cd , v \xs = \sup_{U }
    \sx \cd , v \xs = \sup _{w\in \liet} \sx \tf_\nu (w), v\xs .
  \end{gather*}
  Since $\nu \in \W(M, T, \om, \mu)$, Corollary \ref{sup} yields that
  \begin{gather*}
    \la_\nu(\e (-v))= \max_{x\in M} \mu^v(x) .
  \end{gather*}
  Moreover
  \begin{gather*}
    \la_\nu (\e (-v)) = \lim_{t\to +\infty} \int_M \mu^v (\exp
    (itv)\cd x) d\nu(x) = \lim_{t\to +\infty} \sx \tf_\nu(\exp(itv),
    v\xs.
  \end{gather*}
  (See the proof of Theorem \ref{calcolo-lambda}.)  Therefore
  \begin{gather*}
    \sup_{w\in \liet} \sx \tf_\nu(w), v\xs \geq \la_\nu(\e(-v)) .
  \end{gather*}
  Summing up
  \begin{gather*}
    \sx y, v\xs = \sup_{w\in \liet} \sx \tf_\nu(w), v\xs \geq
    \la_\nu(\e(-v)) = \max_{x\in M} \mu^v(x) = \max_P \sx \cd , v\xs.
  \end{gather*}
  This means that the linear function $\sx \cd, v\xs$ attains its
  maximum on $P$ at the point $ y \in \inte P$.  Since $P$ is a convex
  set, this implies $v=0$, a contradiction.  Therefore we have indeed
  $\overline{U} = P$ and $U = \inte P$.
\end{proof}

\begin{teo}\label{T-propria}
  If $\nu \in \W(M, T, \om, \mu)$ and the action of $T$ on $M$ is
  almost effective, then $F_\nu : T^\C \ra \inte P$ is a surjective
  submersion with compact fibres.
\end{teo}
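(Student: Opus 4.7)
The plan is to reduce everything to the preceding proposition via the polar decomposition of $T^\C$. Since $T$ is compact abelian, the map $T \times i\liet \to T^\C$, $(t, v) \mapsto t \exp(iv)$, is a diffeomorphism; let $\pi: T^\C \to \liet$ denote the associated projection onto the second factor, $\pi(t \exp(iv)) = v$. This is a smooth submersion whose fibres are translates of $T$ and hence compact.

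Next I would exploit $K$-equivariance of the momentum mapping. By Proposition \ref{equivarianza}, $\fun(k \cd \tau) = \Ad^*(k) \fun(\tau)$ for any $k \in T$ and $\tau \in \pb$. Since $T$ is abelian, $\Ad^*(k)$ is the identity on $\liet^*$, so $\fun$ is invariant under the $T$-action on $\pb$. Applied to $\tau = \exp(iv) \cd \nu$, this gives
\begin{gather*}
F_\nu(t \exp(iv)) = \fun(t \cd \exp(iv) \cd \nu) = \fun(\exp(iv) \cd \nu) = \tf_\nu(v).
\end{gather*}
In other words, $F_\nu = \tf_\nu \circ \pi$.

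By the previous proposition (applied with the hypothesis $\nu \in \W(M,T,\om,\mu)$ and the almost effectiveness of the $T$-action), $\tf_\nu: \liet \to \inte P$ is a diffeomorphism. Composing a surjective submersion with compact fibres with a diffeomorphism yields another surjective submersion with compact fibres. Therefore $F_\nu: T^\C \to \inte P$ is a surjective submersion, and each fibre $F_\nu^{-1}(x) = T \cd \exp(i \tf_\nu^{-1}(x))$ is diffeomorphic to $T$, in particular compact.

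There is no real obstacle here: all the analytic content (strict convexity of $\psiM_\nu$, the identification of the image with $\inte P$ via the maximal weight computation) was already carried out in the preceding proposition. The present theorem is simply the translation of that statement from $\liet$ to the full complex torus $T^\C$ using the polar decomposition and the abelian form of $K$-equivariance.
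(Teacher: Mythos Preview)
Your proof is correct and follows essentially the same route as the paper: factor $F_\nu$ through the polar decomposition $T^\C \cong T \times \liet$, use that $T$ is abelian to see $F_\nu$ is $T$-invariant, and then invoke the preceding proposition to identify the quotient map with the diffeomorphism $\tf_\nu : \liet \ra \inte P$. The paper's proof is nearly identical, writing $F_\nu = \tf_\nu \circ \operatorname{pr}_2 \circ \phi^{-1}$ and noting the fibres are the $T$-cosets.
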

\begin{proof}
  Since $T^\C$ is abelian, the map $F_\nu$ is $T$-invariant: if
  $k\in T$, then $F_\nu( kg) = \Ad (k) F_\nu(g) = F_\nu(g)$.  Let
  $\phi : T\times \liet \ra T^\C$ be the diffeomorfism
  $\phi(k, v) : = k \cd \exp(iv)$ and let
  $\operatorname{pr}_2: T\times \liet \ra \liet$ be the projection on
  the second factor. Then
  $F_\nu = \tf_\nu \circ \operatorname{pr}_2 \circ \phi\meno$.
  Therefore $F_\nu$ is a proper submersion onto $\inte P$ and
  its the fibres are the $T$--cosets.
\end{proof}
\begin{say}
  Consider the following example: $M=S^2$ with $T=S^1$ acting by
  rotation around the $z$-axis; $\nu$ is the measure concentrated at
  the South pole.  Then the group $T^\C = \C^*$ acts as complex
  dilations of the Riemann sphere leaving the South pole fixed. Thus
  the map $F_\nu$ is constant. In particular it is not a submersion
  from $\C^*$ to $\liet \cong \R$ . Clearly $\nu $ does not belong to
  $\W(S^2, T)$. So in some sense the previous result is sharp.  We are
  going to prove a similar result in the non-abelian case. This will
  be very general, though not as sharp as the abelian one. Indeed we
  will need a technical condition that is dealt with in the next
  lemma.
\end{say}
\begin{lemma}
  Let $M$ be a compact manifold and let $K$ be a compact Lie group
  acting continuously on $M$.  Let $\nu_0\in \pb$ be $K$-invariant and
  let $\nu \in \pb$ be absolutely continuous with respect to
  $\nu_0$. Let $k_n$ be a sequence in $K$ converging to $ k$.  Then
  $k_n \cd \nu \to k \cd \nu$ in the norm \eqref{totvar}.
\end{lemma}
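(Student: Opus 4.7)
The plan is to reduce the total variation distance to an $L^1$--distance with respect to $\nu_0$ and then use the $K$--invariance of $\nu_0$ to upgrade continuity from continuous test-functions (where it is easy) to arbitrary $L^1$--densities.

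First, write $\nu = f\nu_0$ with $f\in L^1(\nu_0)$, $f\geq 0$. I would show that $k_n\cd \nu = (f\circ k_n\meno)\nu_0$. Indeed, for any $\phi\in C(M)$, the change of variables formula \eqref{eq:pushforward} together with the $K$--invariance $k_n\pf\nu_0 = \nu_0$ gives
\[
\int_M \phi\, d(k_n\cd\nu) = \int_M \phi(k_n\cd x)\,f(x)\, d\nu_0(x) = \int_M \phi(y)\, f(k_n\meno\cd y)\, d\nu_0(y),
\]
and since $C(M)$ separates measures, this identifies the density. The same holds for $k$. Consequently, by \eqref{totvar},
\[
\|k_n\cd\nu - k\cd\nu\| = \int_M |f\circ k_n\meno - f\circ k\meno|\, d\nu_0.
\]

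The heart of the argument is to prove that the map $K\to L^1(\nu_0)$, $k\mapsto f\circ k\meno$, is continuous at $k$. Again by $K$--invariance of $\nu_0$, composition with any element of $K$ is an isometry of $L^1(\nu_0)$:
\[
\|h\circ k\meno\|_{L^1(\nu_0)} = \|h\|_{L^1(\nu_0)} \qquad \text{for all } h\in L^1(\nu_0),\ k\in K.
\]
For a \emph{continuous} test function $h\in C(M)$, the argument in the proof of Lemma~\ref{prob-continua} shows that $k_n\to k$ in $K$ implies $k_n \to k$ uniformly on the compact $M$; by uniform continuity of $h$ this forces $h\circ k_n\meno \to h\circ k\meno$ uniformly on $M$, and hence in $L^1(\nu_0)$ because $\nu_0(M)=1$.

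For a general $f\in L^1(\nu_0)$ I would conclude by a standard $3\varepsilon$--argument. Given $\varepsilon>0$, pick $h\in C(M)$ with $\|f-h\|_{L^1(\nu_0)}<\varepsilon$; this is possible since $\nu_0$ is a finite Borel measure on a compact metrizable space, so continuous functions are dense in $L^1(\nu_0)$. The isometry property yields $\|f\circ k_n\meno - h\circ k_n\meno\|_{L^1(\nu_0)} < \varepsilon$ and $\|f\circ k\meno - h\circ k\meno\|_{L^1(\nu_0)} < \varepsilon$, so the triangle inequality together with the continuous case gives
\[
\limsup_{n\to\infty}\|f\circ k_n\meno - f\circ k\meno\|_{L^1(\nu_0)} \leq 2\varepsilon,
\]
and letting $\varepsilon\to 0$ concludes the proof. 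The only nontrivial step is the initial identification of the density $f\circ k_n\meno$, which crucially uses the $K$--invariance of $\nu_0$; once this is in place, everything is routine functional analysis.
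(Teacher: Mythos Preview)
Your proof is correct and follows essentially the same route as the paper's: write $\nu=f\nu_0$ via Radon--Nikodym, identify $k_n\cd\nu=(f\circ k_n\meno)\nu_0$ using the $K$--invariance of $\nu_0$, approximate $f$ by a continuous function, and conclude by a $3\varepsilon$--argument exploiting that composition with elements of $K$ is an $L^1(\nu_0)$--isometry. The only cosmetic difference is that the paper records the inequality $\|k_n\cd\nu-k\cd\nu\|\leq\|f\circ k_n\meno-f\circ k\meno\|_{L^1(\nu_0)}$ rather than your (equally valid) equality.
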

\begin{proof}
  By the Radon-Nikodym theorem there is a non-negative function
  $\phi\in L^1(M, \nu_0)$ such that $d \nu= \phi \cd d\nu_0$.  We
  claim that
  \begin{gather*} ||k_n \cd \nu - k \cd \nu|| \leq || \phi\circ
    k_n\meno - \phi\circ k||_{L^1( \nu_0)}.
  \end{gather*}
  If $f$ is a bounded measurable function on $M$, then
  \begin{gather*}
    \int_M f\, d(k\cd \nu) = \int_M f(k x) \phi(x) d\nu_0(x) = \int_M
    f(y) \phi(k\meno\cd y) d\nu_0(y),
  \end{gather*}
  and similarly for $k_n$. So
  \begin{gather*}
    ||k_n \cd \nu - k \cd \nu|| = \\
    =\sup \Bigl \{
    \int_M f d (k_n \cd \nu) -  \int_M f d (k \cd \nu) : f\in C(M), \sup_M |f| \leq 1\Bigr \} = \\
    = \sup \Bigl \{ \int_M f ( \phi \circ k_n\meno - \phi\circ k\meno
    ) d \nu_0 : f\in C(M), \sup_M |f| \leq 1
    \Bigr \} \leq \\
    \leq || \phi \circ k_n\meno - \phi\circ k \meno||_{L^1( \nu_0)}.
  \end{gather*}
  This proves the claim.  Given $\eps>0$ fix a continuous function
  $\phi_0$ such that
  \begin{gather*}
    ||\phi - \phi_0 ||_{L^1( \nu_0)} < \eps.
  \end{gather*}
  Using the $K$--invariance of $\nu_0$ we get
  \begin{gather*}
    ||\phi \circ k_n \meno - \phi \circ k\meno||_{L^1( \nu_0)} \leq\\
    \leq ||\phi \circ k_n \meno - \phi_0 \circ k_n\meno||_{L^1(
      \nu_0)} + ||\phi_0 \circ k_n \meno - \phi_0 \circ
    k\meno||_{L^1( \nu_0)} + \\
    +||\phi_0 \circ k\meno - \phi \circ k\meno ||_{L^1( \nu_0)} =\\
    = 2 ||\phi -\phi_0||_{L^1( \nu_0)} + ||\phi_0 \circ k_n\meno -
    \phi_0 \circ k\meno||_{L^1( \nu_0)}
    <\\
    < 2\eps + ||\phi_0 \circ k_n\meno - \phi_0 \circ k\meno||_{L^1(
      \nu_0)}.
  \end{gather*}
  As $\phi_0$ is continuous there is $\delta >0$ such that
  $ |\phi_0(x) - \phi_0 (y) | < \eps$ if $d(x, y) < \delta $.  The
  action of $K$ on $M$ being continuous and $M$ being compact imply
  that $k_n \to k $ uniformly on $M$ (see the proof of Lemma
  \ref{prob-continua}). Thus there is $n_0 $ such that for any
  $n\geq n_0$ and for any $x\in M$, $d(k_n\cd x, k \cd x) < \delta$.
  Therefore for $n\geq n_0$
  \begin{gather*}
    ||k_n \cd \nu - k \cd \nu|| \leq ||\phi \circ k_n \meno - \phi
    \circ k\meno||_{L^1( \nu_0)} < (2 + \nu_0(M) ) \eps = 3\eps.
  \end{gather*}
  This proves the lemma.
\end{proof}

\begin{prop}\label{propina}
  Let $(M, \om)$ be a \Keler manifold and let $K$ be a compact group
  acting isometrically and almost effectively on $M$ with momentum
  mapping $\mu: M \ra \liek^*$.  If $\nu_0\in \pb$ is $K$-invariant
  and $\nu \in \W(M,K)$ is absolutely continuous with respect to
  $\nu_0$, then $F_\nu(G) = \into$ and $F_\nu : G\ra \into$ is a
  fibration with compact connected fibres.
\end{prop}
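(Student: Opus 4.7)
The plan is to combine the submersion property from Theorem \ref{maxirank}, the abelian case in Theorem \ref{T-propria}, and the norm-continuity lemma just established. Before starting, by the lemma in \ref{puzzola} we may shift $\mu$ by a central element to arrange $0 \in \intec$; this only translates $F_\nu$ by a constant and does not affect any of the conclusions. By Theorem \ref{W-stabile}, $\nu$ is stable; by Corollary \ref{stabcomp}, $G_\nu$ is compact; and hence by Theorem \ref{maxirank}, $F_\nu \colon G \to \intec$ is a smooth submersion.

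For the surjectivity, I would fix a maximal torus $T \subset K$ with momentum polytope $P \subset \liet^*$. Since $S(\liet) \subset S(\liek)$, $\nu \in \W(M,K)$ forces $\nu \in \W(M,T)$. For $h \in T^\C$, the commutativity of $T^\C$ forces $F_\nu(h)$ to be fixed under the coadjoint action of $T$, hence $F_\nu(h) \in (\liek^*)^T = \liet^*$ and agrees with $F^T_\nu(h)$; Theorem \ref{T-propria} then yields $F_\nu(T^\C) = \inte P$. $K$-equivariance of $F_\nu$ gives $F_\nu(G) \supset K \cdot \inte P$, and the convexity theorem (every coadjoint $K$-orbit in $\intec$ meets $\inte P \cap \liet^*_+$) implies $K \cdot \inte P = \intec$, so $F_\nu(G) = \intec$.

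The main obstacle is properness of $F_\nu$. Suppose $\{g_n\} \subset G$ satisfies $F_\nu(g_n) \to c \in \intec$. Using the $KAK$ decomposition, write $g_n = k'_n \exp(iu_n) k''_n$ with $k'_n, k''_n \in K$ and $u_n$ in a closed Weyl chamber of $\liet$. After extracting convergent subsequences $k'_n \to k'$ and $k''_n \to k''$, the norm-continuity lemma (which precisely uses the hypothesis that $\nu$ is absolutely continuous with respect to a $K$-invariant measure) yields $k''_n \cd \nu \to \tilde\nu := k'' \cd \nu$ in total variation. Setting $a_n := \exp(iu_n) \in T^\C$ and using that pushforward by a homeomorphism is an isometry for the total variation norm, together with $|\fun(\sigma) - \fun(\sigma')| \leq \|\mu\|_\infty \cdot \|\sigma - \sigma'\|$, a short computation shows that $F_{\tilde\nu}(a_n)$ and $\Ad^*(k'_n)^{-1} F_\nu(g_n)$ have the same limit $c' \in \intec$. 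Because $a_n \in T^\C$, this limit lies in $\liet^* \cap \intec = \inte P$. Since $\tilde\nu = k'' \nu \in \W(M,K) \subset \W(M,T)$, Theorem \ref{T-propria} gives that $F^T_{\tilde\nu}$ is proper onto $\inte P$, so $a_n$ has a convergent subsequence in $T^\C$. Hence $g_n$ does too, proving properness.

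Once $F_\nu$ is a proper surjective submersion onto the connected base $\intec$, Ehresmann's theorem implies it is a locally trivial fibration with compact fibres. For the connectedness of the fibres, the fibre over $0$ coincides via Lemma \ref{critical-point} with the critical set of $\Psi^M(\nu,\cd)$, which by the strict convexity of $\psi^M_\nu$ established just above and the polystability of $\nu$ must be a single left $K$-coset $K g_0^{-1}$, in particular connected. Local triviality and connectedness of $\intec$ then force every fibre to be connected.
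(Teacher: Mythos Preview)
Your argument has a recurring error: you assert that for $h\in T^\C$ the value $F_\nu(h)$ lies in $\liet^*$, on the grounds that commutativity of $T^\C$ makes it $T$-fixed. But $K$-equivariance gives $\Ad^*(k)F_\nu(h)=F_\nu(kh)=F_\nu(hk)=\fun(hk\cdot\nu)$, and this equals $F_\nu(h)$ only if $k\cdot\nu=\nu$. Since $\nu$ is not assumed $T$-invariant (only $\nu_0$ is $K$-invariant), the claim fails, and with it your direct surjectivity argument and the sentence ``this limit lies in $\liet^*\cap\intec=\inte P$'' in the properness step.

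Both problems are easily repaired. In the properness argument, do not claim $c'\in\liet^*$; instead compose with the projection $\pi:\liek^*\to\liet^*$. Since $F^T_{\tilde\nu}=\pi\circ F_{\tilde\nu}|_{T^\C}$ (this is the commutative diagram the paper writes down), you get $F^T_{\tilde\nu}(a_n)\to\pi(c')\in\pi(\intec)\subset\inte P$, and then Theorem \ref{T-propria} gives the convergent subsequence exactly as you intended. Once properness is established, a separate surjectivity argument is unnecessary: a proper submersion is both open and closed, hence surjective onto the connected target $\intec$. This is how the paper proceeds.

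Your connectedness argument, identifying the fibre over $0$ with a single left $K$-coset via strict convexity of $\psi^\proba_\nu$, is correct and in fact cleaner than the paper's, which instead observes that the fibration is trivial over the contractible base $\intec$ and uses connectedness of $G$.
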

\begin{proof}
  Since $\nu \in \W(M,K)$, we know that after appropriately shifting
  $\mu$, the measure $\nu$ is stable (Theorem \ref{W-stabile}, hence
  $G_\nu$ is compact (Corollary \ref {stabcomp}).  By Theorem
  \ref{maxirank} $F_\nu$ is of maximal rank. Therefore its image is an
  open subset of $\liek$. Since it is contained in $\convo$, it is in
  fact contained in the interior of $\convo$, i.e. in $\intec$.  We
  claim that if $F_\nu$ is regarded as a map $F_\nu: G \ra \into$,
  then it is proper.  Indeed let $\{g_n \}\subset G$ be a sequence
  such that $\{F_\nu(g_n)\}$ converges to a point of $\into$. We have
  to show that some subsequence of $\{g_n\}$ converges.  Let
  $T\subset G$ be a maximal torus. Since $G=KT^\C K$, we write
  $g_n = q_n \exp(iv_n) k_n\meno$ with $k_n, q_n \in K$ and
  $v_n \in \liet$. Passing to subsequences we may assume that
  $k_n \to k$ and $q_n \to q$.  From the fact that
  $F_\nu (kg) = \Ad(k) F_\nu(g)$, we immediately deduce that the
  sequence $\{F_\nu(\exp(iv_n) k_n\meno)\}$ is also convergent in
  $\into$.  We claim that
  \begin{gather} \label{sss} F_\nu(\exp(iv_n)k_n \meno ) -
    F_\nu(\exp(iv_n)k \meno ) \to 0.
  \end{gather}
  Fix $w\in \liek$. Then
  \begin{align*}
    \sx F_\nu (\exp  (&iv_n) k_n\meno) , w\xs =
                      \int_M \mu^w (\exp(iv_n)k_n\meno \cd x) d\nu(x) = \\
                    &= \int_M \mu^w ( \exp(iv_n) y) d (k_n\meno \cd\nu)(y) .
    \\
    \sx F_\nu(\exp (&iv_n)k_n \meno ) -
                     F_\nu(\exp(iv_n)k \meno), w\xs = \\
                    & = \int_M \mu^w \circ \exp(iv_n) \, d (k_n\meno \cd\nu) - \int_M
                      \mu^w \circ \exp(iv_n) \, d (k\meno \cd\nu) .
    \\
    \Big |\sx F_\nu( \exp &(iv_n)k_n \meno - F_\nu(\exp(iv_n)k \meno , w\xs
                            \Big | \leq
    \\
                    &  \leq || \mu^w \circ \exp(iv_n)||_{C(M)} \cd || k_n\meno \cd\nu -
                      k\meno \cd\nu ||\leq \\
                    & \leq \sup_M | \mu| \cd || k_n\meno \cd\nu - k\meno \cd\nu ||.
    \end{align*}
  Therefore
  \begin{gather*}
    \Big | F_\nu(\exp(iv_n)k_n \meno - F_\nu(\exp(iv_n)k \meno \Big |
    \leq \sup_M | \mu| \cd || k_n\meno \cd\nu - k\meno \cd\nu ||.
  \end{gather*}
  Since $\nu_0$ is $K$--invariant and $\nu$ is absolutely continuous
  with respect to $\nu_0$, the previous lemma ensures that
  $ || k_n\meno \cd\nu - k\meno \cd\nu || \to 0$.  Thus \eqref{sss} is
  proved.  Since $ \{ F_\nu(\exp(iv_n)k_n \meno ) \}$ is convergent in
  $\into$, it follows that also $\{ F_\nu(\exp(iv_n)k \meno ) \} $
  converges to some point of $\into$. And the same is true for
  $F_\nu(k \exp(iv_n)k\meno)$ $ = $ $ \Ad(k) F_\nu(\exp(iv_n)k) $.
  The points $k \exp(iv_n) k\meno$ belong to the maximal torus
  $S:=k T k\meno$.  Let $\pi: \liek^* \ra \lies^*$ denote the
  restriction.  Then $\mu_S:=\pi \circ \mu : M\ra \lies^*$ is a
  momentum mapping for the action of $S$ on $M$.  Let $P$ denote the
  momentum polytope.  Then $P=\pi (\convo) $.  Note also that
  $\pi(\into) \subset P^0$ since $\pi$ is obviously an open map.  By
  Theorem \ref{T-propria} the map
  \begin{gather*}
    F_\nu^S : S^\C \lra \lies^*, \quad F_\nu^S (a) : = \int_M
    \mu_S(a\cd x) d\nu(x),
  \end{gather*}
  is a proper submersion onto $\inte P$. (Note that
  $\W(M,K) \subset \W(M, T)$.)  But
  $F_\nu^S = \pi \circ F_\nu \restr{S^\C}$, that is the following
  diagram commutes:
  \begin{equation*}
\begin{tikzcd}[column sep=large]
  S^\C \arrow[hook]{r} \arrow {rrd}[below]  {F_\nu^S}
  &  G \arrow {r}{F_\nu} & \liek^* \arrow {d}{r}{\pi}\\
  & & \lies^*.
  \end{tikzcd}
\end{equation*}
  Since $\{k\exp(iv_n)k\meno\} \subset S^\C$ and
  $F^S_\nu(k \exp(iv_n)k\meno)$ converges to some point of $P^0$, 
  we conclude that the sequences $\{k\exp(iv_n)k\meno\} $ and
  $\{\exp(iv_n)\}$ admit convergent subsequences.  This proves that
  $F_\nu : G \ra \into$ is proper. Hence it is a closed map. As it is
  also open, it is onto. Moreover it is a locally trivial fibration by
  Ehresmann theorem. Since the base is contractible, the fibration is
  trivial, i.e. $G$ is diffeomorphic to a product $\Omega \times F$
  where $F$ is the fibre. Since $G$ is connected it follows that $F$
  is connected.
\end{proof}
\begin{teo}\label{propria}
  Let $(M,g, \om)$ be a \Keler manifold and let $K$ be a compact group
  acting isometrically and almost effectively on $M$ with momentum
  mapping $\mu: M \ra \liek^*$.  If $\nu \in \ac(M)$, then
  $F_\nu(G) = \into$ and $F_\nu : G\ra \into$ is a fibration with
  compact connected fibres.
\end{teo}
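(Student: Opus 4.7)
The plan is to reduce Theorem \ref{propria} to Proposition \ref{propina} by exhibiting, for any $\nu \in \ac(M)$, a $K$-invariant probability measure $\nu_0$ such that $\nu \in \W(M,K)$ and $\nu$ is absolutely continuous with respect to $\nu_0$. Once this is established, the conclusion follows immediately from Proposition \ref{propina}.

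First I would construct $\nu_0$. Since the K\"ahler metric $g$ is $K$-invariant, the Riemannian volume form $\dvol_g$ is a smooth strictly positive $K$-invariant measure on $M$; normalize it to a probability measure $\nu_0 := \dvol_g / \Vol(M,g)$. By the definition of $\ac(M)$ (see \ref{smooth-measure} and Definition \ref{def-ac}), every $\nu \in \ac(M)$ is absolutely continuous with respect to any smooth strictly positive measure and in particular with respect to $\nu_0$. Thus the second hypothesis of Proposition \ref{propina} is satisfied.

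Next I would check that $\nu \in \W(M,K)$. Fix $v \in S(\liek)$ and adopt the notation of \ref{say-frankel}: by \eqref{scompstabile} we have $M = \bigsqcup_{i=0}^r W^u_i$, and $W^u_r$ is the unique open (and dense) unstable manifold, while for $i < r$ the stratum $W^u_i$ is a locally closed submanifold of positive codimension. Therefore $\bigsqcup_{i<r} W^u_i$ has Lebesgue measure zero in every chart, hence $\nu_0$-measure zero, and hence $\nu$-measure zero by absolute continuity. This gives $\nu(W^u_r) = 1$ for every $v \in S(\liek)$, which is exactly the defining property of $\W(M,K)$.

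Combining these two observations, the hypotheses of Proposition \ref{propina} are fulfilled, and we conclude that $F_\nu(G) = \into$ and that $F_\nu: G \ra \into$ is a fibration with compact connected fibres. The only mildly subtle step is verifying $\nu \in \W(M,K)$; the codimension argument above works because Frankel's theorem guarantees that $W^u_r$ is the only top-dimensional stratum, so no genuine obstacle arises.
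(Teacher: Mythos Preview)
Your proposal is correct and follows essentially the same approach as the paper's proof: both reduce to Proposition \ref{propina} by taking $\nu_0$ to be the normalized $K$-invariant Riemannian measure and noting that $\ac(M) \subset \W(M,K)$ (the paper records this inclusion right after the definition of $\W(M,K)$). Your version is slightly more explicit about why the lower-dimensional strata have measure zero, but the logic is identical.
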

\begin{proof}
  Fix a $K$-invariant Riemannian metric on $M$ and denote by $\nu_0$
  the the normalized Riemannian measure. Then $\nu$ is absolutely
  continuous with respect to $\nu_0$ and $\nu \in \W(M,K)$. Thus the
  theorem follows from the previous proposition.
\end{proof}

\section{Applications}
\label{sec:applications}
\begin{say}
  In this section we describe some applications of the results
  obtained in the previous sections.  In \ref
  {autospazi}--\ref{flag-stability} we give an explicit
  characterization of stable, semi-stable and polystable measures on
  $\PP^n$. These have attracted interest in various contexts, e.g. in
  the study of balanced metrics \cite{donaldson-numerical-results} and
  in the study of the loop space of $S^2$ \cite{millson-zombro}.

  In the rest of the section we make some remarks on the relation of
  our results to the problem of upper bounds for $\la_1$ in the style
  of Bourguignon-Li-Yau \cite{bourguignon-li-yau}.

\end{say}

\begin{say}
  \label{autospazi}
  We wish to characterize stable, semi-stable and polystable measures
  on $\PP^n$ endowed with the Fubini-Study metric and the standard
  action of $\Sl(n+1, \C)$.  Let $v \in \su(n+1)$, $v\neq 0$, let
  $c_0 < \cds < c_r$ be the eigenvalues of $iv$ and let
  $V:=\C^{n+1} = V_{0} \oplus \cds \oplus V_{r}$ be the eigenspace
  decomposition of $v$, so that $v$ acts on $V_i$ as the
  multiplication by $-ic_i$.  Set
  \begin{gather}
    \label{eq:4}
    E_i : = \bigoplus_{j\leq i} V_j, \quad L_i:=\PP(E_i), \quad L_{-1}
    :=\vacuo.
  \end{gather}
\end{say}
\begin{lemma}
  \label{proiettivo}
  The critical values of $\mu^v$ are $c_0 < \cds <c_r$.  The critical
  component corresponding to $c_i$ is $C_i = \PP(V_i)$ and the
  unstable manifold of $C_i$ is $W_i^u = L_i - L_{i-1}$.
\end{lemma}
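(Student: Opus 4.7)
The plan is to unpack the Fubini-Study momentum map on $\PP^n$ in the eigenbasis of $iv$, then use that expression to read off the critical set, critical values, and unstable manifolds of $\mu^v$.

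First I would recall that, with the standard $\SU(n+1)$-action and the Fubini-Study form, one can take
\begin{gather*}
\mu^v([z]) = \frac{\langle iv z, z\rangle}{|z|^2}
\end{gather*}
(possibly up to a harmless positive constant that does not affect the ordering of critical values, or an additive shift of the whole momentum map). Write $z = z_0 + \cds + z_r$ with $z_j \in V_j$; since $iv$ acts on $V_j$ by the real scalar $c_j$, this yields
\begin{gather*}
\mu^v([z]) = \frac{\sum_{j=0}^r c_j |z_j|^2}{\sum_{j=0}^r |z_j|^2},
\end{gather*}
a convex combination of the $c_j$'s weighted by $|z_j|^2 / |z|^2$. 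In particular $\mu^v$ already takes values in $[c_0,c_r]$ and equals $c_i$ exactly when $z$ is concentrated in $V_i$.

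Next I would identify the critical set. Since $d\mu^v = i_{v_M}\om$ and $\om$ is non-degenerate, $[z]$ is critical iff $v_M([z]) = 0$, i.e.\ $[z]$ is fixed by the one-parameter group $\exp(\R v)$. A point of $\PP^n$ is fixed by $\exp(tv)$ for all $t$ iff $z$ is an eigenvector of $v$, i.e.\ $z\in V_i$ for some $i$. Hence $\Crit(\mu^v) = \bigsqcup_{i=0}^r \PP(V_i)$, the pieces $\PP(V_i)$ are connected and pairwise disjoint with $\mu^v\equiv c_i$ on $\PP(V_i)$. So the critical values are exactly $c_0 < \cds < c_r$, the critical components are $C_i = \PP(V_i)$, and each $c_i$ is hit by a single critical component, matching the Frankel-type picture recalled in \ref{say-frankel}.

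Finally I would compute the unstable manifolds directly. Since $v|_{V_j} = -ic_j$, we have $\exp(itv)|_{V_j} = e^{tc_j}\cd \Id$, so
\begin{gather*}
\exp(itv) \cd z = \sum_{j=0}^r e^{tc_j} z_j.
\end{gather*}
If $[z] \in L_i \setminus L_{i-1}$, that is $z_i \neq 0$ and $z_j = 0$ for $j>i$, then after projectivizing one divides by $e^{tc_i}$ and obtains
\begin{gather*}
[\exp(itv)\cd z] = \Big[ \sum_{j\leq i} e^{t(c_j - c_i)} z_j \Big] \xrightarrow[t\to +\infty]{} [z_i] \in \PP(V_i) = C_i,
\end{gather*}
because $c_j - c_i < 0$ for $j < i$. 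By the definition \eqref{def-limes} and \eqref{def-wu} this gives $\limes([z]) = [z_i] \in C_i$, so $L_i \setminus L_{i-1} \subset W^u_i$. Conversely, if $[z]\in W^u_i$ then its limit lies in $\PP(V_i)$, which forces $z$ to have its top nonzero component in $V_i$, i.e.\ $[z]\in L_i \setminus L_{i-1}$. The two inclusions give $W^u_i = L_i \setminus L_{i-1}$.

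The only mildly delicate point is getting the sign conventions right between the momentum-map normalization, the action of $v$ on $V_j$ (scalar $-ic_j$), and the gradient flow convention $\phi_t = \exp(-itv)$ used in \ref{notazione-flusso}; once the signs are aligned so that $\exp(itv)$ acts by $e^{tc_j}$ on $V_j$, the asymptotic behaviour at $t\to +\infty$ drives $[z]$ to the projectivization of its top component, which is what yields the stated identification of $W^u_i$.
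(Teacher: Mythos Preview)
Your proof is correct and follows essentially the same approach as the paper: the same expression for $\mu^v([z])$ as a convex combination of the $c_j$'s, the same computation of $\exp(itv)\cdot[z]$ showing the limit is the projectivization of the top nonzero eigencomponent, and hence $L_i\setminus L_{i-1}\subset W^u_i$. The only cosmetic difference is that for the reverse inclusion the paper invokes that both $\{W^u_i\}$ and $\{L_i\setminus L_{i-1}\}$ are partitions of $\PP^n$, whereas you argue it directly from the limit computation; these are equivalent one-line observations.
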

\begin{proof}
  Fix a point $x=[z] \in \PP^n(\C)$ and decompose $z$ according to the
  eigenspace decomposition: $z= z_0 + \cds + z_r$.  Then
  \begin{gather}
    \label{eq:6}
    \mu^v (x) = \frac{i \sx v (z), z\xs }{|z|^2} = \sum_{i=0}^r c_i
    \frac{|z_i|^2}{|z|^2}
  \end{gather}
  Hence $C_i = (\mu^v)\meno (c_i) = \PP(V_i)$.  Next let $i$ be such
  that $z_i \neq 0$ and $z_j = 0$ for $j>i$.  Then
  \begin{gather*}
    \exp(itv) \cd x = [ e^{c_0t} z_0 + \cds + e^{c_i t} z_i] = \\
    = [ e^{(c_0 - c_i)t } z_1 + \cds + e^{(c_{i-1} - c_i)t}
    z_{i-1} + z_i] ,\\
    \lim_{t\to +\infty} \exp(i tv) \cd x = [z_i].
  \end{gather*}
  It follows that $ \{[z ] \in \PP^n : z=z_0 + \cds +z_i $ with
  $z_i \neq 0\} = L_i-L_{i-1} \subset W^u_i $. Since both $\{W^u_i \}$
  and $\{L_i - L_{i-1}\}$ are partitions of $\PP^n$ we deduce
  $W^u_i = L_i - L_{i-1}$.
\end{proof}

\begin{teo}
  \label{prost}
  A measure $\nu\in \proba(\PP^n)$ is stable (respectively
  semi-stable) with respect to the $\Sl(n+1, \C)$-action if and only
  if for any proper linear subspace $L \subset \PP^n$
  \begin{gather}
    \label{eq:prost}
    \nu(L) < \frac{\dim L +1}{n+1} \quad \Big ( \text{respectively }\,
    \nu(L) \leq \frac{\dim L +1}{n+1} \Big ).
  \end{gather}
\end{teo}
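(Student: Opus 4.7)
The plan is to combine the numerical criteria of Theorems \ref{stabile} and \ref{semi-stable-abstract} with the explicit computation of $\la_\nu$ from Theorem \ref{calcolo-lambda} and Lemma \ref{proiettivo}. Since $\proba(\PP^n)$ is compact, the semi-stability criterion applies.

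I would fix a nonzero $v\in \su(n+1)$ and keep the notation $c_0 < \cds < c_r$, $V_i$, $E_i$, $L_i$ of \ref{autospazi}. By Lemma \ref{proiettivo} we have $W_i^u = L_i - L_{i-1}$ with $L_{-1}:= \vacuo$, so Theorem \ref{calcolo-lambda} yields
\begin{gather*}
\la_\nu(\e(-v)) = \sum_{i=0}^r c_i \bigl(\nu(L_i)-\nu(L_{i-1})\bigr).
\end{gather*}
Writing $d_i := \dim_{\C} E_i = \dim L_i + 1$, the traceless condition $\tr(iv)=0$ becomes $\sum_{i=0}^r (d_i-d_{i-1}) c_i = 0$. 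A summation by parts applied to each of these identities, using $d_{-1}=\nu(L_{-1})=0$, $d_r=n+1$ and $\nu(L_r)=1$, combines them into
\begin{gather*}
\la_\nu(\e(-v)) = \sum_{i=0}^{r-1} (c_{i+1}-c_i)\left[\frac{\dim L_i+1}{n+1}-\nu(L_i)\right].
\end{gather*}
Since $c_{i+1}-c_i>0$ and each $L_i$ with $i\leq r-1$ is a proper linear subspace of $\PP^n$, this displays $\la_\nu(\e(-v))$ as a positive linear combination of the quantities $\frac{\dim L_i+1}{n+1}-\nu(L_i)$ along the flag determined by $v$.

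From here the $(\Leftarrow)$ direction is immediate: under the assumed inequality every summand is non-negative for every nonzero $v$, so $\la_\nu\geq 0$ on $\tits$, and $\nu$ is semi-stable by Theorem \ref{semi-stable-abstract}; the strict inequality gives analogously $\la_\nu>0$ and stability by Theorem \ref{stabile}. For the converse, given any proper $L\subset \PP^n$ of dimension $k$, I would exhibit a single $v$ that tests exactly the inequality for $L$: write $L = \PP(E)$ with $\dim_{\C} E = k+1$, choose a Hermitian orthogonal decomposition $\C^{n+1} = E\oplus E^\perp$, and let $v \in \su(n+1)$ act as $-ic_0$ on $E$ and $-ic_1$ on $E^\perp$ with $c_0<c_1$ and $(k+1)c_0+(n-k)c_1=0$. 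Then $r=1$, $L_0=L$, and the boxed formula collapses to
\begin{gather*}
\la_\nu(\e(-v)) = (c_1-c_0)\left[\frac{k+1}{n+1}-\nu(L)\right],
\end{gather*}
so semi-stability (resp.\ stability) forces $\nu(L) \leq (\dim L+1)/(n+1)$ (resp.\ the strict inequality). The only non-routine step is the summation by parts producing the boxed formula; everything else is verification and bookkeeping.
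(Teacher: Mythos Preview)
Your proof is correct and follows essentially the same approach as the paper: both apply Theorem \ref{calcolo-lambda} with Lemma \ref{proiettivo}, perform an Abel summation using the traceless condition, and for the converse test a given $L$ with a two-eigenvalue element of $\su(n+1)$. Your presentation is slightly cleaner in that you fold the trace identity directly into the summation by parts to obtain the single formula $\la_\nu(\e(-v)) = \sum_{i=0}^{r-1} (c_{i+1}-c_i)\bigl[\tfrac{\dim L_i+1}{n+1}-\nu(L_i)\bigr]$, whereas the paper carries out the two computations separately and then compares; the content is the same.
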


\begin{proof}
  Fix $\nu \in \proba(\PP^n)$. Assume first that the strict inequality
  holds in \eqref{eq:prost}. Fix $v\in S(\su(n+1) )$ and use the
  notation fixed above. Set $a_i : = \nu(L_i)$, so
  $0=a_{-1} \leq a_0 \leq \cds \leq a_r =1$ and
  $\nu(W^u_i) = a_i - a_{i-1}$ by the previous lemma.  By Theorem
  \ref{stabile} to prove that $\nu$ is stable it is enough to show
  that $\la_\nu >0$ on $\tits$, where $X=\Sl(n+1, \C) / \SU(n+1)$.  We
  apply Theorem \ref{calcolo-lambda} to compute $\la_\nu(\e(-v))$:
  \begin{gather*}
    \la_\nu (\e(-v) ) = \sum_{i=0}^r c_i (a_i - a_{i-1}) =
    \sum_{i=0}^r c_i a_i - \sum_{i=1}^rc_i a_{i-1}=\\
    = c_r + \sum_{i=0}^{r-1} a_i (c_i - c_{i+1}) .
  \end{gather*}
  Set $\eps_i:= \dim E_i$. By \eqref{eq:prost} $a_i < \eps_i / (n+1)$.
  Since $c_i - c_{i+1} < 0$ we get
  \begin{gather*}
    \la_\nu (\e(-v)) > c_r + \frac{1}{n+1}
    \sum_{i=0}^{r-1} (c_i - c_{i+1}) \eps_i .  \\
    \sum_{i=0}^{r-1} (c_i - c_{i+1}) \eps_i = c_0 \eps_0 +
    \sum_{i=1}^{r-1} c_i (\eps_i - \eps_{i-1}) -c_r \eps_{r-1} =
    \\
    = \sum_{i=0}^{r-1} c_i \dim V_i - c_r \eps_{r-1}.
  \end{gather*}
  Since $v\in \su(n+1)$,
  \begin{gather*}
    \sum_{i=0}^ r c_i \dim V_i =\operatorname{tr} v=0.
  \end{gather*}
  So
  \begin{gather*}
    \sum_{i=0}^{r-1} c_i \dim V_i - c_r \eps_{r-1} = -c_r \dim V_r -
    c_r \eps_{r-1} = - (n+1) c_r.
  \end{gather*}
  Summing up
  \begin{gather*}
    \la_\nu (\e(-v) ) > c_r - \frac{ (n+1) c_r}{n+1} =0.
  \end{gather*}
  So \eqref{eq:prost} implies that $\la_\nu >0$ on $\tits$, hence that
  $\nu$ is stable. The same computation using Theorem \ref
  {semi-stable-abstract} yields the result for semi-stability.
  Conversely let us prove that condition \eqref{prost} is necessary
  for stability. Let $L \subset \PP^n$ be a proper linear subspace of
  dimension $d$. Let $V_0 \subset \C^{n+1}$ be such that $L=\PP(V_0)$
  and let $V_1$ be the orthogonal complement to $V_0$. Thus
  $\dim V_0 = d+1$, $\dim V_1= n-d$.  Set $c_0 = (d-n) $ and
  $c_1 = d+1$. Let $v$ be the operator that acts as multiplication by
  $-ic_j$ on $V_j$.  Since $d< n$, $v \in \su(n+1)$.  If
  $\nu \in \proba(\PP^n) $ is stable, then
  \begin{gather*}
    0 < \la_\nu(\e(-v)) = c_0 \nu (L)) + c_1 (1 - \nu(L)) = c_1 -
    (c_1 - c_0) \nu (L),\\
    \text{hence } \quad \nu(L) < \frac{c_1}{c_1 -c_0} = \frac{d+1}{
      n+1}.
  \end{gather*}
  Thus \eqref{eq:prost} is a necessary condition for stability.  Also
  in this case the argument for semi-stability is identical.
\end{proof}
The next result gives a complete characterization of the measures on
$\PP^n$ that are polystable with respect to the standard action of
$\Sl(n+1,\C)$.
\begin{teo}
  \label{props}
  Let $\nu \in \proba(\PP^n)$. Then $\nu$ is polystable if and only if
  there exists a splitting $\C^{n+1}=V_0\oplus \cdots \oplus V_r$ and
  measures $\nu_j \in \proba (\PP (V_j))$ such that $\nu_j$ is stable
  with respect to $\mathrm{SL}(V_j,\C))$ for $j=0,\ldots,r$ and
  \begin{gather*}
    \nu=\sum_{j=0}^r \frac{\dim V_j }{n+1} \nu_j.
  \end{gather*}
\end{teo}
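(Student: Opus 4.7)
The plan is to prove both implications, treating the nontrivial direction by induction on $n$.

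For the \emph{if} direction, I would start from a given decomposition. Each $\nu_j$ being stable under $\SL(V_j,\C)$, Theorem \ref{stabile} supplies $g_j \in \SL(V_j,\C)$ with $g_j \cd \nu_j$ having vanishing $\SU(V_j)$-momentum. Using the Fubini-Study formula $\mu([z]) = \frac{zz^*}{|z|^2} - \frac{1}{n+1}I$, a direct block computation shows that if $\tau \in \proba(\PP(V_j))$ has zero $\SU(V_j)$-momentum then $\int \mu\, d\tau = \frac{1}{\dim V_j} P_j - \frac{1}{n+1} I$, where $P_j$ is the orthogonal projection onto $V_j$. Summing the contributions of $\frac{\dim V_j}{n+1} g_j \cd \nu_j$ yields $\fun(g \cd \nu) = 0$ for the block-diagonal $g = \oplus g_j \in \SL(n+1,\C)$, proving polystability.

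For the \emph{only if} direction, I would replace $\nu$ by $g\cd\nu$ for a witnessing $g$, reducing to $\fun(\nu) = 0$. If $\lieq = 0$ in Lemma \ref{polistabile-tits}, then $\nu$ is stable and the trivial decomposition $V_0 = \C^{n+1}$ works. Otherwise pick $0 \neq v \in \lieq \subset \su(n+1)$; since $iv \in \lieg_\nu$, we have $\exp(\C v) \subset G_\nu$, so $\nu$ is invariant under the gradient flow $\phi_t(x) = \exp(-itv) \cd x$ of $\mu^v$. The identity $\frac{d}{dt}(\mu^v\circ\phi_t) = -|\grad \mu^v|^2 \circ \phi_t$ integrated against $\nu$ gives $\int_M |\grad \mu^v|^2 \, d\nu = 0$, so $\supp \nu \subset \Crit(\mu^v)$. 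By Lemma \ref{proiettivo}, $\Crit(\mu^v) = \bigsqcup_j \PP(W_j)$ for the eigenspace decomposition $\C^{n+1} = \bigoplus_j W_j$ of $iv$, and since $\tr(iv)=0$ with $v\neq 0$ at least two $W_j$ are nonzero, hence each $\dim W_j < n+1$.

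Writing $\nu = \sum_j a_j \tilde\nu_j$ with $\tilde\nu_j$ the normalized restriction to $\PP(W_j)$ and $a_j = \nu(\PP(W_j))$, I would unpack the block-diagonal structure of $\fun(\nu)=0$, using that for $[z]\in\PP(W_j)$ the $W_k$-block of $\mu([z])$ equals $-\frac{1}{n+1}I_{W_k}$ for $k\neq j$ while its $W_j$-block equals $\mu_j([z]) + \left(\frac{1}{\dim W_j} - \frac{1}{n+1}\right)I_{W_j}$, where $\mu_j$ is the $\SU(W_j)$-momentum. The trace condition on each block simultaneously forces $a_j = \dim W_j/(n+1)$ and $\int_{\PP(W_j)} \mu_j\, d\tilde\nu_j = 0$, so each $\tilde\nu_j$ is polystable on $\PP(W_j)$. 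Since $\dim W_j - 1 < n$, applying the inductive hypothesis to each $\tilde\nu_j$ yields a refined splitting $W_j = \bigoplus_k V_{j,k}$ with stable pieces $\nu_{j,k}$ and $\tilde\nu_j = \sum_k \frac{\dim V_{j,k}}{\dim W_j}\nu_{j,k}$; substituting and reindexing produces the required decomposition of $\nu$, the base case $n=0$ being trivial.

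The \textbf{main technical points} are the support reduction $\supp \nu \subset \Crit(\mu^v)$ via the flow-invariance argument, and the careful block-decomposition bookkeeping that simultaneously pins down the weights $a_j$ and the vanishing of each restricted momentum. Once these two ingredients are in place, the induction runs mechanically and the converse is a direct computation.
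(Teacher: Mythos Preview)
Your argument is correct, but the paper takes a genuinely different route in the \emph{only if} direction. Rather than inducting on $n$, the paper invokes Lemma~\ref{tecnico}, which in turn rests on Lemma~\ref{lemmozzo}: one chooses $v\in\e^{-1}(Z(\nu))$ with $\dim T_v$ \emph{maximal}, and this maximality forces $\nu$ to be not merely polystable but \emph{stable} with respect to the product group $\prod_j\SL(W_j,\C)$, so that each restricted $\nu_j$ is already stable (its stabilizer sits inside the compact stabilizer of $\nu$) and no recursion is needed. Your inductive approach is more elementary---it avoids the torus-dimension trick entirely---at the cost of having to iterate; the paper's approach is one-shot but leans on the more delicate machinery of \S\ref{sec:polyst-semi-stab}. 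Your support-reduction via $\int|\grad\mu^v|^2\,d\nu=0$ is a nice alternative to the paper's use of the limit map $\alpha$ and weak convergence. One small gap worth patching: in your \emph{if} direction the block computation with $\mu([z])=\frac{zz^*}{|z|^2}-\frac{1}{n+1}I$ requires the splitting $\bigoplus V_j$ to be \emph{orthogonal}, which is not assumed; the paper handles this (and the analogous pull-back issue after replacing $\nu$ by $g\cd\nu$) via the observation in \ref{say-metriche-Pn} that stability is independent of the choice of Hermitian product, so you should either cite that or conjugate to an orthogonal splitting first.
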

We start with two technical lemmata.
\begin{lemma}\label{tecnico}
  Let $\nu\in \proba(\PP^n )$ be such that $\fun (\nu)=0$. Then there
  exists an orthogonal splitting
  $\C^{n+1}=V_0\oplus \cdots \oplus V_r$ such that $\nu$ is
  concentrated on $\PP (V_0)\cup \cdots \cup \PP (V_r)$ and it is
  stable with respect to
  $\mathrm{SL}(V_0 , \C) \times \cdots \times \mathrm{SL}(V_r , \C)$.
\end{lemma}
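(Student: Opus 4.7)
The plan is a strong induction on $N := \dim_\C \C^{n+1}$ (the base case $N = 1$ is trivial since $\PP^0$ is a point and $\SL(1,\C)$ is trivial). Assume $\fun(\nu) = 0$. Proposition \ref{compatible} and Lemma \ref{polistabile-tits} give $\lieg_\nu = \liek_\nu \oplus i\lieq$ with $\lieq \subset \liek_\nu$ and $Z(\nu) = \e(S(\lieq))$. If $\lieq = \{0\}$ then $\la_\nu > 0$ on $\tits$, so by Theorem \ref{stabile} the measure $\nu$ is already $\SL(n+1,\C)$-stable and we take $r = 0$, $V_0 = \C^{n+1}$.

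Otherwise pick $v \in \lieq \setminus \{0\}$. Since $v \in \su(n+1)$ is nonzero and traceless, $iv$ is a traceless Hermitian operator with at least two distinct real eigenvalues $c_0 < \cdots < c_s$, producing an orthogonal eigenspace decomposition $\C^{n+1} = V^{(0)}\oplus\cdots\oplus V^{(s)}$ with every $\dim V^{(j)} < N$. Lemma \ref{proiettivo} identifies $\Crit(\mu^v) = \bigsqcup_j \PP(V^{(j)})$. Because $iv \in \lieg_\nu$, $\exp(itv)\pf \nu = \nu$ for all $t \in \R$; letting $t \to +\infty$ and using dominated convergence together with the pointwise limit of Proposition \ref{limite} yields $\nu = \alfa\pf \nu$, so $\nu$ is concentrated on $\bigsqcup_j \PP(V^{(j)})$. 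Setting $\lambda_j := \nu(\PP(V^{(j)}))$ and $\tilde\nu_j := \lambda_j^{-1}\nu|_{\PP(V^{(j)})}$ when $\lambda_j > 0$, we decompose $\nu = \sum_j \lambda_j \tilde\nu_j$.

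Next, for $w \in \su(V^{(j)})$ extended by zero to $\tilde w \in \su(n+1)$, the formula \eqref{eq:6} shows that $\mu^{\tilde w} \equiv 0$ on $\PP(V^{(k)})$ for $k \neq j$ and $\mu^{\tilde w}|_{\PP(V^{(j)})} = \mu_j^w$; hence $0 = \fun^{\tilde w}(\nu) = \lambda_j \fun_j^w(\tilde\nu_j)$, so $\fun_j(\tilde\nu_j) = 0$ whenever $\lambda_j > 0$. The induction hypothesis applied to each such $(\tilde\nu_j, V^{(j)})$ produces an orthogonal splitting $V^{(j)} = \bigoplus_k W_{j,k}$ for which $\tilde\nu_j$ is stable with respect to $\prod_k \SL(W_{j,k},\C)$. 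For indices $j$ with $\lambda_j = 0$ I would subdivide $V^{(j)}$ arbitrarily into orthogonal complex lines $W_{j,k}$. Relabelling the whole collection $\{W_{j,k}\}$ as $V_0,\ldots,V_r$ yields the required orthogonal splitting of $\C^{n+1}$, on which $\nu$ is concentrated.

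To conclude stability of $\nu$ with respect to $H := \prod_l \SL(V_l,\C)$, by Theorem \ref{stabile} it suffices to show $\la_\nu > 0$ on the Tits boundary of $H/K_H$. The block-diagonal form of the $H$-action on $\bigsqcup_l \PP(V_l)$ together with the integral formula for $\la_\nu$ established in the proof of Theorem \ref{calcolo-lambda} gives $\la_\nu(\e(-w)) = \sum_l \lambda_l\,\la_{\tilde\nu_l}(\e(-w_l))$ for $w = (w_l)_l \in \bigoplus_l \su(V_l)$. For $w \neq 0$ some $w_l$ is nonzero; since $\su(V_l) = 0$ when $\dim V_l = 1$, that index satisfies $\dim V_l > 1$, whence $\lambda_l > 0$ by construction (stability of the inductive piece forbids an $\SL$-factor of dimension $>1$ from acting trivially) and $\tilde\nu_l$ is $\SL(V_l,\C)$-stable, so $\la_{\tilde\nu_l}(\e(-w_l)) > 0$, while all remaining summands are $\geq 0$ by polystability. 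The main technical point, and the reason for subdividing the zero-mass eigenspaces into complex lines, is precisely this last coordination: without the subdivision, the $\SL(V_l,\C)$-factors with $\lambda_l = 0$ and $\dim V_l > 1$ would fix $\nu$ trivially and enlarge $H_\nu$ by non-compact subgroups, destroying stability.
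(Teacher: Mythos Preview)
Your proof is correct and takes a genuinely different route from the paper's. The paper argues non-inductively: it picks $v \in \e^{-1}(Z(\nu))$ \emph{maximising} $\dim T_v$, invokes the argument of Lemma \ref{lemmozzo} to conclude directly that $\nu$ is $G'$-stable for a complement $K'$ of $T_v$ in $K^v$, and then observes that $\prod_j \SL(V_j,\C) \subset G'$, so stability passes to the subgroup. Your approach instead picks an \emph{arbitrary} $v \in \lieq \setminus \{0\}$, checks $\fun_j(\tilde\nu_j)=0$ on each eigenspace block via the explicit formula \eqref{eq:6}, and recurses. What you gain is that you never need the maximal-torus-dimension trick of Lemma \ref{lemmozzo}; the argument is more elementary and self-contained. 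What the paper gains is a one-shot proof with no bookkeeping: it need not worry about zero-mass blocks (your subdivision into lines) nor reassemble the $\la_\nu$-formula at the end, because $G'$-stability is obtained directly from the abstract theory. Both arguments rely on the same key observation that $\exp(\C v) \subset G_\nu$ forces $\nu = \alpha_*\nu$, hence concentration on $\bigsqcup_j \PP(V_j)$.

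One small point worth tightening in your write-up: the expression $\la_{\tilde\nu_l}(\e(-w_l))$ only makes sense when $w_l \neq 0$, and even then the normalisation $|w_l|\neq 1$ means you are really using the homogeneous extension $\int \mu_l^{w_l}(\alpha_{w_l}(x))\,d\tilde\nu_l$; the decomposition $\la_\nu(\e(-w)) = \sum_l \lambda_l \cdot |w_l|\,\la_{\tilde\nu_l}(\e(-w_l/|w_l|))$ (summands with $w_l=0$ or $\lambda_l=0$ contributing zero) is what is actually proved. This does not affect the argument, since the sign analysis goes through unchanged.
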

\begin{proof}
  Since $\nu$ is polystable $\la_\nu \geq 0$.  If $Z(\nu)=\emptyset$,
  then $\nu$ is stable and the theorem is trivially proved.  Assume
  $Z(\nu)\neq \vacuo$. Let $v\in Z(\nu)$ be such that
  \begin{gather*}
    \dim T_v =\mathrm{max}_{w\in \e^{-1}(Z(\nu))} \dim T_{w},
  \end{gather*}
  where $T_w=\overline{\exp(\R w)}$. By the same argument as in the
  proof of Lemma \ref{lemmozzo}, $\exp(\C v )\subset G_{\nu}$.  If
  $K'$ is a compact subgroup of $K^v$ such that $T_v \cdot K'=K^v$ and
  $K'\cap T_v$ is finite, then $\nu$ is stable with respect to
  $G'=(K')^{\C}$. Let $ \limes :M \lra \mathrm{Crit}(\mu^v) $ be the
  map defined in \eqref{def-limes}.  By Proposition \ref{limite} it is
  well-defined.  If $f \in C(M)$, then
  $\lim_{t\to +\infty} f (\exp(itv)\cd x ) = f (\alfa(x))$.  It
  follows that $\exp(itv) \cd \nu \weak \alfa \cd \nu $ for
  $t\to + \infty$.  Since $\exp(\C v)\subset G_\nu$, we conclude that
  $\nu= \limes\cd \nu$.  So $\nu$ is concentrated on the image of
  $\alfa$, i.e. on the critical sets of $\mu^v$.  Using the notation
  of \ref{autospazi} and Lemma \ref{proiettivo} this means that $\nu$
  is concentrated on $\PP (V_0)\cup \cdots \cup \PP (V_r)$.  Moreover,
  $\mathrm{SU} (V_0 )\times \cdots \times \mathrm{SU} (V_j)\subset
  K^v$
  and the intersection
  $(\mathrm{SU}(V_0 ) \times \cdots \times \mathrm{SU}(V_r )) \cap
  T_v$
  is finite due the fact that $T_v$ is contained in the center of
  $K^v$.  This proves that we may choose $K'$ so that
  $\mathrm{SU}(V_0) \times \cdots \times \mathrm{SU}(V_r )\subset K'$.
  Since $\nu$ is stable for $G'$ then it is stable for
  $\mathrm{SL}(V_0 , \C) \times \cdots \times \mathrm{SL}(V_r ,
  \C)\subset G'$ concluding the proof.
\end{proof}

\begin{say} \label{say-metriche-Pn}
  \label{herm} Let $V$ be a complex vector space and let $h $ be a
  Hermitian product on $V$. The group $\SU (V, h) $ acts on $\PP(V)$
  with momentum mapping $\mu$ given by the formula
  \begin{gather*}
    \mu([v]) = -{i} \left ( P_v - \frac{\id_V} {n+1}\right),
  \end{gather*}
  where $P_v$ denotes the $h$-orthogonal projection $V\ra \C v$.  The
  construction in Section \ref{sez-misure} yields a momentum mapping
  $\fun: \proba(\PP(V)) \ra \su(V, h)$.  If $h'$ is a second Hermitian
  product on $V$, and $h' (z, z') = h (Lz, Lz') $ with $L \in \Gl(V) $
  $h$-self-adjoint, then $L \circ P'_v = P_{Lv} \circ L$, so
  $ \Ad L \circ \mu' = \mu \circ L$. Similarly
  $ \Ad L \circ \fun' = \fun \circ L$, so
  $\fun\meno(0) = L \left ( (\fun')\meno(0) \right )$. It follows
  immediately that the stability of a measure $\nu \in \proba(\PP(V))$
  does not depend on the choice of $h$.  This problem is central in
  \cite{teleman-symplectic-stability}.  It would be interesting to
  develop the arguments in that paper in the setting of Section
  \ref{sec:abstract-setting}.
\end{say}

\begin{lemma}\label{misure-concentrate-polistabile}
  Let $\nu\in \proba(\PP^n)$. Let
  $\C^{n+1}=V_0 \oplus \cdots \oplus V_r$ be an orthogonal splitting
  such that $\nu$ is concentrated on
  $\PP^n (V_0) \cup \cdots \cup \PP (V_r)$. Then $\fun (\nu)=0$ if and
  only if $\nu(\PP(V_j))=\dim V_j/(n+1)$ and $\fun_j (\nu_j)=0$, where
  $\fun_j : \proba(\PP(V_j)) \ra \su (V_j)$ is the momentum mapping
  with respect to the natural $\SU(V_j , \C )$-action on $\PP(V_j)$.
\end{lemma}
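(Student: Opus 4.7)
The plan is to decompose $\nu$ according to the splitting and then run a direct block-matrix computation using the explicit formula for $\mu$ recalled in \ref{say-metriche-Pn}. Set $a_j:=\nu(\PP(V_j))$. Since the subspaces $\PP(V_j)\subset \PP^n$ are pairwise disjoint (the $V_j$'s meet only at $0$) and their union supports $\nu$, we have $\sum_j a_j = 1$. For each $j$ with $a_j>0$, let $\nu_j:=a_j\meno \nu\restr{\PP(V_j)} \in \proba(\PP(V_j))$, so that $\nu = \sum_j a_j \nu_j$ as measures on $\PP^n$. For the forward implication we will see that $\fun(\nu)=0$ automatically forces $a_j>0$, so this decomposition is fine.

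The key observation is that for any $[z] \in \PP(V_j)$ the orthogonal projector $P_z\colon V \to \C z$ vanishes on every $V_k$ with $k \neq j$, because the splitting is orthogonal. Consequently $\mu([z]) = -i(P_z - \id_V/(n+1))$ is block-diagonal with respect to $V = V_0 \oplus \cdots \oplus V_r$: the block $(j,j)$ equals the $\SU(V_j)$-momentum $\mu_j([z])$ plus a scalar multiple of $\id_{V_j}$ (arising from the mismatch between $\id_V/(n+1)$ and $\id_{V_j}/\dim V_j$), while each block $(k,k)$ with $k \neq j$ is simply $i \id_{V_k}/(n+1)$, and all off-diagonal blocks vanish.

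I would then integrate against $\nu_j$ and sum over $j$ with weights $a_j$. The block $(k,k)$ of $\fun(\nu)$ comes out to be of the form $a_k \fun_k(\nu_k) + i c_k \id_{V_k}$, for an explicit scalar $c_k$ depending only on $a_k$, $\dim V_k$ and $n$ (indeed, $c_k$ collects the $\id_{V_k}/(n+1)$ contributions from every $j \neq k$ together with the scalar correction from the $(k,k)$-block of $\mu([z])$ for $[z]\in \PP(V_k)$). Requiring that this expression vanish separates into two conditions: taking the trace, and recalling that $\fun_k(\nu_k)$ is traceless, forces $c_k=0$, which simplifies to $a_k = \dim V_k/(n+1)$; given this, what remains is exactly $a_k \fun_k(\nu_k) = 0$, and since $a_k>0$ this is equivalent to $\fun_k(\nu_k) = 0$. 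The converse is immediate by reversing the same block computation.

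There is no conceptual obstacle: the entire statement reduces to a direct-sum matrix calculation exploiting orthogonality of the decomposition. The only bookkeeping pitfall is to keep track of the scalar correction appearing because $\mu$ subtracts $\id_V/(n+1)$ while $\mu_j$ subtracts $\id_{V_j}/\dim V_j$; handling this cleanly is precisely what allows the scalar and traceless parts of each diagonal block to be isolated, yielding the two conditions stated in the lemma.
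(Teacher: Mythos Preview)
Your proposal is correct and follows essentially the same route as the paper: decompose $\nu=\sum_j a_j\nu_j$, use the explicit projector formula for $\mu$ from \ref{say-metriche-Pn}, observe block-diagonality with respect to the orthogonal splitting, and separate each diagonal block into its traceless part $a_k\fun_k(\nu_k)$ and its scalar part $i(1/(n+1)-a_k/d_k)\id_{V_k}$. The paper carries out the same computation with the bookkeeping made explicit, arriving at $\fun(\nu)=\sum_j a_j\fun_j(\nu_j)+i\sum_j(1/(n+1)-a_j/d_j)\id_{V_j}$ and then invoking orthogonality of the summands; your trace argument to isolate the scalar part is an equivalent way to finish.
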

\begin{proof}
  By assumption $\nu=\sum_{j=0}^r a_j \nu_j$, where $\nu_j$ is a
  probability measure on $\PP (V_j)$, $a_j:=\nu(\PP(V_j))\geq 0$ and
  $\sum_{j=0}a_j=1$.  Let $\mu$ be the momentum mapping on $\PP^n$ and
  $\mu_j $ the momentum mapping on $\PP(V_j)$ for the natural
  $\SU(V_j)$-action.  Set for simplicity $V:=\C^{n+1}$ and
  $d_j:= \dim V_j$.  If $[z] \in \PP(V_j)$, then
  \begin{gather*}
    \mu([z]) = \mu_j([z]) + {i} \left ( \frac{\id_V}{n+1} -
      \frac{\id_{V_j}} { d_j} \right ).
  \end{gather*}
  Hence
  \begin{gather*}
    \fun (\nu) = \sum_{j=0}^r a_j \int_{\PP(V_j)} \mu([z]) d\nu_j ([z]) = \\
    = \sum_{j=0}^r a_j \fun_j (\nu_j) + {i}
    \sum_{j=0}^r a_j \left ( \frac{\id_V}{n+1} - \frac{\id_{V_j}} { d_j} \right ) =\\
    = \sum_{j=0}^r a_j \fun_j (\nu_j) + {i} \left ( \frac{\id_V}{n+1}
      - \sum_{j=0}^r \frac{a_j } { d_j} \id_{V_j} \right ) =
    \\
    = \sum_{j=0}^r a_j \fun_j (\nu_j) + {i} \sum_{j=0}^r \left (
      \frac{1}{n+1} - \frac{a_j } { d_j} \right ) \id_{V_j} .
  \end{gather*}
  Since $\fun_j(\nu_j) \in \su (V_j)$ the terms in the last sum are
  all orthogonal to each other.  Thus $\fun(\nu) = 0$ if and only if
  every term vanishes.
\end{proof}

\begin{proof}[Proof of Theorem \ref{props}]
  If $\nu$ is stable, then the theorem holds with $r=0$.  Assume that
  $\nu$ is polystable but not stable. Then there exists
  $g\in \mathrm{SL}(n+1,\C)$ such that $\fun (g\cdot \nu)=0$. Set
  $\nu'=g\cdot \nu$. By Lemma \ref{tecnico} there exists an orthogonal
  splitting $\C^{n+1}=W_0 \oplus \cdots \oplus W_r$ such that $\nu'$
  is concentred on $\PP (W_0)\cup \cdots \cup \PP (W_r)$ and $\nu'$ is
  stable with respect to
  $G:=\mathrm{SL}(W_0 , \C) \times \cdots \times \mathrm{SL}(W_r ,
  \C)$.
  Therefore $\nu'=\sum_{j=0}^r a_j \nu_j'$ where
  $\nu_j' \in \proba (\PP (W_j))$ and $\sum_{j=0}^r a_j=1$.  By Lemma
  \ref{misure-concentrate-polistabile}, $a_j={d_j}/(n+1)$ and
  $\fun_j (\nu_j')=0$. In particular $\nu'_j$ is polystable with
  respect to $\mathrm{SL}(W_j , \C)$ for $j=0,\ldots,r$.  The
  stabilizer of $\nu'_j$ in $\SL(W_j)$ is contained $G_\nu$. Since
  $\nu$ is $G$-stable, $G_\nu$ is compact, so the same holds for the
  stabilizer in $\SL(W_j)$ of $\nu'_j$. Hence $\nu'_j$ is actually
  stable with respect to $\SL(W_j)$ for any $j$.  Set
  $V_j:=g^{-1} (W_j)$ for $j=0,\ldots,r$.  Then
  $\C^{n+1} = V_0\oplus \cds \oplus V_r$.  The measures
  $\nu_j := g\meno \cd \nu'_j \in \proba (\PP(V_j))$ are stable with
  respect to $\SL(V_j)$ since $g$ is an isomorphism and as observed in
  \ref{say-metriche-Pn} we do not need to care about the Hermitian
  product. Finally
  \begin{gather}
    \label{umpfi-2}
    \nu = \sum_{j=0}^r \frac{d_j}{n+1} \nu_j.
  \end{gather}
  We have proved that the condition in the theorem is necessary for
  polystability.  Vice versa assume that there exists a splitting
  $\C^{n+1}=V_0 \oplus \cdots \oplus V_r$ such that \eqref{umpfi-2}
  holds, where $\nu_j\in \proba (\PP (V_j ))$ is stable. Fix a
  Hermitian scalar product $h$ on $V=\C^{n+1}$, such that the above
  splitting is orthogonal.  By Lemma
  \ref{misure-concentrate-polistabile} $\nu$ is polystable when we
  consider on $\proba(\PP^n)$ the action of $\SU(V, h)$. As noted in
  \ref{herm} the choice of the Hermitian product does not matter. So
  $\nu$ is polystable also with respect to the standard action of
  $\SU(n+1) $.
\end{proof}

\begin{say}\label{recover}
  In \cite[\S 2.2] {donaldson-numerical-results} Donaldson considers
  the $\nu$-balanced metrics on $\OO_{\PP^n}(1)$ where
  $\nu \in \proba (\PP^n)$. He establishes two conditions on $\nu$
  ensuring that there exists a $\nu$-balanced metric. The existence of
  a $\nu$-balanced metric is equivalent to the polystability of $\nu$
  with respect to the action of $\Sl (n+1, \C)$ on $\PP^n$.  We now
  show how to recover Donaldson's conditions. The two conditions are
  the following:
  \begin{enumerate}
  \item for any non-trivial linear function $\la $ on $ \C^{n+1}$ the
    function $\log ( \frac{|\la(z)|} {|z|})$ is $\nu$-integrable.
  \item $\nu = \sum_{i=1}^r \la_i \delta_{y_i}$ with $\la_i \geq 0$
    and $\sum_{i=1}^r \la_i = 1$ and such that
    \begin{gather*}
      \frac{\nu( L)}{\dim L + 1 } < \frac{1}{n+1},
    \end{gather*}
  \end{enumerate}
  for any proper linear subspace $L\subset \PP^n$.  We prove that if
  $\nu \in \proba (\PP^n)$ satisfies one of the above conditions, then
  it is stable.  If the first condition holds, then $\nu(H)=0$ for any
  hyperplane. Hence it satisfies the assumption of Theorem \ref{prost}
  and so it is stable. The second condition is also clearly covered by
  Theorem \ref{prost}.
\end{say}
\begin{say}
  \label{zombro-1}
  Millson and Zombro \cite{millson-zombro} studied measures on the
  sphere $S^2\subset \R^3$. If $i:S^2 \hookrightarrow \R^3$ denotes
  the inclusion, they studied the following map
  \begin{gather*}
    \proba(S^2) \lra \R^3 \qquad \nu \mapsto \int_{S^2} i(x) d\nu(x),
  \end{gather*}
  that assigns to a measure its center of mass.  Since the inclusion
  $i$ is the momentum mapping $\mu$ for the $\mathrm{SO}(3)$-action on
  $S^2$, this is exactly the map \eqref{def-momento-misure}.  Their
  main result asserts that if $\nu$ has no atom of mass greater than
  or equal to ${1}/{2}$, then there exists $g\in \mathrm{PGL}_2 (\C)$
  such that $\fun (g\nu)=0$. This follows directly from Theorem
  \ref{prost}. In fact Theorem \ref{prost} says that this condition is
  equivalent to stability. The proof in \cite{millson-zombro} is based
  on the notion of conformal center of mass. This technique is rather
  complicated and seems to be limited to $S^2$.  We also point out
  that Corollary \ref{stable-dense} (relying on Corollary \ref
  {stable-open-abstract-setting}) generalizes Proposition 3.3 of
  \cite{millson-zombro}. Furthermore Millson and Zombro defined a
  measure $\nu \in \proba(S^2)$ to be \emph{semi-stable} if there is
  no atom of mass greater than ${1}/{2}$, and \emph{nice semi-stable}
  if it has two atoms each of mass ${1}/{2}$. By Theorem \ref{prost}
  our notion of semi-stable coincides with theirs and by Theorem
  \ref{props} a measure is nice semi-stable if and only if it is
  polystable but not stable.
\end{say}
\begin{say}\label{flag-stability}
  Let $M\subset \mathbb P^n$ be a projective manifold endowed by the
  Fubini study metric. Let $K=\{g\in \mathrm{SU}(n+1):\, g(M)=M\}$ and
  let $G=K^\C$.  The $K$-action on $M$ is Hamiltonian and a momentum
  mapping is given by the restriction to $M$ of the momentum mapping
  of $\PP^n$.  Let $\nu \in \proba{(M)}$ be such that for any linear
  subspace $L\subset \mathbb P^n$,
  \begin{gather}\label{eq:flag}
    \nu(M\cap L) < \frac{\dim L +1}{n+1} \left(\mathrm{respectively}\,
      \nu(M\cap L)\leq \frac{\dim L +1}{n+1}\right).
  \end{gather}
  If $\xi \in \liek$, then the unstable manifolds are given by
  $W_j^\xi =(M\cap L_j )-( M \cap L_{j-1})$. Hence the computation of
  $\la_\nu(\e(-v))$ works as for the projective space $\mathbb P^n$,
  showing that $\nu$ is stable, respectively semi-stable.
\end{say}
\begin{say}\label{eigen}
  We wish to recall briefly the original motivation for the map
  $F_\nu$ defined in \eqref {def-bly}.  Let $(M,g)$ be a compact
  Riemannian manifold. Given functions $f_1, \lds, f_r \in \cinf (M)$
  such that $\int_M f_j \vol_g =0$, the Rayleigh theorem yields the
  upper bound
  \begin{gather}
    \label{rayleigh}
    \la_1 (M,g) \leq \frac{\sum_{j=1}^r \int_N |\nabla f_j|^2_g \vol_g
    } {\sum_{j=1}^r \int_N f_j^2 \vol_g }.
  \end{gather}
  Assume that $M$ admits a special metric $\go$ and that
  $\fo_j , \lds, \fo_r$ are eigenfunctions of $\la_1$ for this metric.
  Assume moreover that there is a large group $G$ acting on $M$.  If
  $g$ is another Riemannian metric on $M$, one might look for
  functions $f_j$ of the form $f_j= a^* \fo_j$ for some $a\in G$.  If
  there is some $a\in G$ that makes the integral of all these
  functions vanish, one gets the bound \eqref{rayleigh}.  If moreover
  one is able to compute the right hand side of the bound, then one
  gets an interesting estimate.

  In the paper \cite{hersch} of Hersch $M=S^2$, $\go$ is the round
  metric and $G=\operatorname{PSL}(2, \C)$ acting by M\"obius
  transformations.  The functions $\fo_j$ are the three coordinate
  functions $x,y,z$. Hersch was able to show that if $g$ is an
  arbitrary Riemannian metric on $S^2$ (normalized to have volume
  $4\pi$), then there is $a\in G$ such that
  $\int_M a^*x \vol_g = \int_M a^*y \vol_g =\int_M a^*z \vol_g=0$.
  Moreover he was able to show that the right hand side in
  \eqref{rayleigh} is equal to 2. Thus he showed that
  $\la_1(S^2, g) \leq 2$.

  Bourguignon, Li and Yau \cite{bourguignon-li-yau} realized that this
  method applies also to estimate $\la_1(\PP^n(\C), g)$ if $g$ is a
  \emph{K\"ahler} metric.  In \cite{biliotti-ghigi-American} we recast
  the method of Hersch-Bouguignon-Li-Yau in terms of momentum mapping
  and applied it when $M$ is an arbitrary compact Hermitian symmetric space,
  $\go$ is the symmetric metric, $G=\Aut(M)$ and the functions $\fo_j$
  are the components of the momentum mapping $\mu: M \ra \liek^*$ for
  $K:= \operatorname{Isom}(M, \go)$.  (Related papers include
  \cite{arezzo-ghigi-loi}, \cite{biliotti-ghigi-AIF}, \cite {legrosa}
  and \cite{panelli-podesta}).

  In \cite{hersch}, \cite{bourguignon-li-yau} and
  \cite{biliotti-ghigi-American} the estimation of $\la_1$ proceeds in
  two steps: the first one is to find $a\in G$ such that
  $\int_M a^*f_j\vol_g=0$.  The second step is to actually compute the
  right hand side in \eqref{rayleigh}.  The map \eqref{def-bly} is the
  tool to deal with the first step.  Set $\nu:=\vol_g / \Vol(M,g)$.
  Since $\fo_j$ are the components of $\mu$,
  $\int_M a^*\fo_j \vol_g = 0$ for all $j$ if and only if
  $\int_M \mu(ax) d\nu(x) =0$. Thus the first step amounts to proving
  that the measure $\nu$ is $G$--polystable!  Theorem \ref{W-stabile}
  represents a very general solution to the first step. The estimate
  we get is the following one.
\end{say}
\begin{teo}
  \label{stimaccia}
  Let $(M,g, \om)$ be a \Keler manifold and let $K$ be a compact group
  acting isometrically and almost effectively on $M$ with momentum
  mapping $\mu: M \ra \liek^*$.  Fix an $\Ad$-invariant scalar product
  on $\liek$ and let $e_1 , \lds, e_r$ be an orthonormal basis of
  $\liek$. Set $\mu_j := \sx \mu, e_j\xs$.  If $g$ is any K\"ahler
  metric on $M$ then there is $a\in G$ such that
  \begin{gather}
    \label{eq:stima}
    \la_1(M,g) \leq \frac{\sum_{j=1}^r \int_M |\nabla (a^*f_j)|^2
      \vol_g } {\int_M a^*(|\mu|^2) \vol_g}
  \end{gather}
\end{teo}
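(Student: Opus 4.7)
The plan is to combine the Rayleigh--Ritz characterization of $\la_1$ with the polystability result (Theorem \ref{W-stabile}), applied to the normalized Riemannian measure of the auxiliary metric $g$. Concretely, set $\nu := \vol_g / \Vol(M,g) \in \proba(M)$; since $g$ is smooth and strictly positive, $\nu$ lies in $\ac(M)$ in the sense of Definition \ref{def-ac}. By the lemma following \ref{puzzola}, the almost-effective hypothesis lets us, up to replacing $\mu$ by $\mu - b$ for a suitable $b \in \liez(\liek)$, assume $0 \in \intec$. Theorem \ref{W-stabile} then produces $a \in G$ such that $\fun(a \cd \nu) = 0$; pairing this identity with the basis vector $e_j$ and applying the change-of-variables formula \eqref{eq:pushforward} gives
\[
\int_M (a^* \mu_j)\, \vol_g = 0, \qquad j = 1, \lds, r.
\]

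Next I would apply the Rayleigh estimate \eqref{rayleigh} to the family of test functions $\{a^* \mu_j\}_{j=1}^r$, obtaining
\[
\la_1(M,g) \leq \frac{\sum_{j=1}^r \int_M |\nabla(a^* \mu_j)|_g^2\, \vol_g}{\sum_{j=1}^r \int_M (a^* \mu_j)^2\, \vol_g}.
\]
Since $\{e_j\}$ is orthonormal, $\sum_j \mu_j^2 = |\mu|^2$, hence the denominator equals $\int_M a^*(|\mu|^2)\, \vol_g$, which is exactly the right-hand side of \eqref{eq:stima} (with the convention that the $f_j$ in the statement denote the components $\mu_j$). Should the shift by $b$ actually be needed, one observes that adding a constant to each $\mu_j$ leaves the gradients $|\nabla(a^* \mu_j)|_g^2$ unchanged, so the numerator is insensitive to the shift, and the $|\mu|^2$ appearing in \eqref{eq:stima} is to be interpreted as that of the shifted momentum map.

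The main obstacle I foresee is really just organizing the hypotheses: checking $\nu \in \W(M,K)$ (immediate from the inclusion $\ac(M) \subset \W(M,K)$ noted in the text) and arranging $0 \in \intec$ via the central shift. Once those two points are in place, the estimate \eqref{eq:stima} reduces to an assembly of the polystability result with the classical variational bound, precisely in the spirit of the Hersch and Bourguignon--Li--Yau tricks described in \ref{eigen}.
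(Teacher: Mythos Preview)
Your proposal is correct and matches the paper's own argument, which is not written out as a separate proof but is contained in the discussion \ref{eigen} immediately preceding the statement: set $\nu=\vol_g/\Vol(M,g)\in\ac(M)$, invoke Theorem \ref{W-stabile} (after the central shift from \ref{puzzola}) to obtain $a\in G$ with $\fun(a\cd\nu)=0$, and then feed the functions $a^*\mu_j$ into the Rayleigh inequality \eqref{rayleigh}. Your handling of the shift and the identification of the denominator with $\int_M a^*(|\mu|^2)\vol_g$ are exactly what is intended.
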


\begin{say}
  The second step is more mysterious.  At the moment we are not able
  to compute the right hand side in \eqref{eq:stima} in any reasonable
  geometric situation, except the known ones, i.e. Hermitian symmetric
  spaces.  We believe that this computation can be carried out in much
  greater generality and that it would yield very interesting
  estimates. We leave this problem for future investigations. It is
  important to notice that in the case of symmetric spaces the
  K\"ahler-Einstein metric (i.e. the symmetric metric) maximizes
  $\la_1$ among K\"ahler metrics in $\chern_1(M)$.  Now thanks to the
  work of Apostolov, Jakobson and Kokarev there are examples of Fano
  manifolds where this does not happen, see
  \cite[Cor. 3.4]{apostolov-jakobson-kokarev}.

\end{say}

\begin{say}
  \label{flag}

  We want to explain the relation between the results in
  \cite{biliotti-ghigi-American} and those in the present paper.  Let
  $M$ be a flag manifold. This means that $K$ acts transitively on the
  complex manifold $M$.  By Bott-Borel-Weyl theorem, any flag manifold
  is the unique complex $K$-orbit in $\mathbb P (V)$ for some
  irreducible representation $\tau: G \longrightarrow \mathrm{GL}(V)$.
  In \cite{biliotti-ghigi-American} we defined $\tau$-admissible
  measure as those $\nu\in \pb$ such that $\nu(H\cap M)=0$ for any
  hyperplane $H\subset \mathbb P(V)$.  It is immediate that a
  $\tau$--admissible measure satisfies condition \eqref{eq:flag}, so
  it is stable.  In \cite[Thm. 3]{biliotti-ghigi-American} we proved
  that for a $\tau$-admissible $\nu$ the map $F_\nu:G \lra \into$ is
  surjective.  By (\ref{flag-stability}) the complement of the open
  unstable manifold of any $v \in \liek$ is contained in a hyperplane
  section of $M$.  So if $\nu$ is $\tau$-admissible, it is
  concentrated on the open unstable manifold. Therefore
  $\nu \in \W (M, K, \omega, \mu)$.  This means that the assumptions
  in Theorem \ref{T-propria} (i.e. when restricting to a torus) are
  weaker than those of \cite[Thm. 3]{biliotti-ghigi-American}.
  Moreover the conclusion is stronger since we prove that $F_\nu$ is a
  surjective submersion.  In the nonabelian case, Theorem
  \ref{propria} treats a slightly smaller class of measures.  The
  proofs are completely different. We expect that a more general
  surjectivity result holds in the nonabelian case, but we leave it
  for future investigations.
\end{say}

\begin{say}
  In \cite{biliotti-ghigi-American} we used the map $F_{\nu_0}$, where
  $\nu_0$ is the $K$-invariant measure on a flag manifold, to get a
  diffeomorphism between $X$ and $\intec$. The following proposition
  shows that this holds in greater generality.
\end{say}

  \begin{prop}\label{compactification}
    Let $(M,\om)$ be a \Keler manifold and $K$ a compact connected Lie
    group. Assume that $K$ acts almost effectively on $M$ with
    momentum mapping $\mu$.  If $\nu_0\in\ac (M)$ is $K$--invariant,
    then $F_{\nu_0}$ descends to a map
    \begin{gather*}
      \tilde{F}_{\nu_0}: X=G/K \ra \liek
    \end{gather*}
    that is a diffeomorphism of the symmetric space $X$ onto $\into$.
  \end{prop}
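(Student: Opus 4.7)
The plan is to combine Theorem~\ref{propria} with the $K$--invariance of $\nu_0$ and a short topological argument. First I would check that $F_{\nu_0}$ descends to $X=G/K$: since $\nu_0$ is $K$--invariant, $k\cd \nu_0=\nu_0$ for every $k\in K$, and therefore
\[
F_{\nu_0}(ak)=\fun(a\cd(k\cd \nu_0))=\fun(a\cd \nu_0)=F_{\nu_0}(a)
\]
for all $a\in G$ and $k\in K$. Thus $F_{\nu_0}$ is right--$K$--invariant; since $\pi:G\ra X$ is a smooth principal $K$--bundle, $F_{\nu_0}$ factors as $F_{\nu_0}=\tilde F_{\nu_0}\circ \pi$ for a unique smooth map $\tilde F_{\nu_0}:X\ra \liek^*\cong \liek$ (identifying $\liek^*$ with $\liek$ via the fixed $\Ad$--invariant scalar product).

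Next I would invoke Theorem~\ref{propria}, which applies because $\nu_0\in \ac(M)$: it asserts that $F_{\nu_0}:G\ra \into$ is a surjective submersion with compact connected fibres. The identity $F_{\nu_0}=\tilde F_{\nu_0}\circ \pi$, together with the surjectivity of $d\pi$, immediately forces $\tilde F_{\nu_0}$ to be a surjective submersion onto $\into$. A dimension count gives
\[
\dim_\R X=\dim_\R G-\dim_\R K=\dim_\R K=\dim_\R \into,
\]
since $G=K^\C$. Hence $\tilde F_{\nu_0}$ is a local diffeomorphism between manifolds of the same dimension.

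It remains to establish injectivity. For $p\in \into$ the fibre is
\[
\tilde F_{\nu_0}\meno(p)=\pi\bigl(F_{\nu_0}\meno(p)\bigr),
\]
the continuous image of a compact connected set, hence itself compact and connected. On the other hand, being a local diffeomorphism, $\tilde F_{\nu_0}$ has discrete fibres. A connected and discrete subset of the manifold $X$ reduces to a single point, so $\tilde F_{\nu_0}$ is injective. A bijective local diffeomorphism is a diffeomorphism, and this concludes the argument.

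The substantive input is Theorem~\ref{propria}, and specifically the connectedness of the fibres of $F_{\nu_0}$; the descent of $F_{\nu_0}$ and the extraction of injectivity are then essentially formal. I do not foresee any real obstacle: the one point to keep in mind is the dimension matching, which uses in an essential way the hypothesis $G=K^\C$.
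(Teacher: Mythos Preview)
Your argument is correct and follows essentially the same outline as the paper: descend $F_{\nu_0}$ to $X$ via $K$--invariance, apply Theorem~\ref{propria}, and conclude by a short topological step. The only difference is in that last step: the paper observes that the descended map is a proper local diffeomorphism, hence a covering, and then uses contractibility of $\into$ to conclude it is a diffeomorphism; you instead use the \emph{connectedness} of the fibres from Theorem~\ref{propria} together with the discreteness forced by a local diffeomorphism to get injectivity directly. Both routes are equally short and rely on the same input; your version has the minor advantage of making explicit the dimension count $\dim_\R X=\dim_\R\liek$ that the paper's phrase ``local diffeomorphism'' tacitly assumes.
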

  \begin{proof}
    Since $k \cd \nu_0 = \nu_0$,
    \begin{gather*}
      F_{\nu_0} (ak) = \int_M \Mom (ak\cd x) d\nu_0(x) = \int _M
      \Mom(a\cd y ) d\nu_0(y) = F_{\nu_0} (a).
    \end{gather*}
    Hence $F_{\nu_0}$ descends to a map on $G/K$. By Theorem
    \ref{propria} it is a local diffeomorphism and a proper map. Hence
    a covering map. Since $\into$ is contractible, it follows that
    $F_{\nu_0}$ is a diffemorphism onto $\into$.
  \end{proof}

\begin{say}
  In \cite[Thm. 2, p. 239]{biliotti-ghigi-American} we proved that if
  $M=K/K_o$ is the flag manifold given by the complex $K$-orbit in
  $\PP(V)$ for an irreducible representation $\tau : G \ra \Gl(V)$,
  then the map $F_{\nu_0}$ extends to the Satake compactification
  $ \XS$ of $ X = G/K$ yielding a homeomorphism between $\XS$ and the
  convex hull of the momentum image of $M$, which is a coadjoint
  orbit. Such a homeomorphism exists for any symmetric space of
  non-compact type by a theorem of Kor\'anyi \cite{koranyi-remarks}.
  If $M$ is an arbitrary compact K\"ahler manifold with a Hamiltonian
  action of $K$, by Proposition \ref{compactification} the convex hull
  $\convo$ is a $K$-equivariant compactification of $G/K$ where
  $G=K^\C$.  In \cite{bgh-israel-p} we completely described the faces
  of $\convo$. More precisely we proved that the faces of $\convo$ are
  exposed and correspond to maxima of components of the momentum
  mapping. We used this fact to realize a close connection between the
  faces of $\convo$ and parabolic subgroups of $G$. Hence, as for the
  Satake compatifications, boundary components of $G/K$ are related to
  parabolic subgroups of $G$. We think that it would be interesting to
  further analyze these connections.
\end{say}

\def\cprime{$'$}

\end{document}